\newcolumntype{Y}{>{\centering\arraybackslash}X} 
\newtheorem{theorem}{Theorem}
\newtheorem{lemma}[theorem]{Lemma}
\newtheorem{proposition}[theorem]{Proposition}
\theoremstyle{remark}
\newtheorem*{remark}{Remark}
\theoremstyle{definition}
\theoremstyle{definition}
\newtheorem*{example}{Example}
\newcommand{\I}{{\scriptscriptstyle I}} 
\newcommand{\J}{{\scriptscriptstyle J}}
\newcommand{\K}{{\scriptscriptstyle K}} 
\renewcommand{\L}{{\scriptscriptstyle L}} 
\newcommand{\M}{{\scriptscriptstyle M}} 
\newcommand{\N}{{\scriptscriptstyle N}} 
\newcommand{\A}{{\scriptscriptstyle A}} 
\newcommand{\B}{{\scriptscriptstyle B}}
\newcommand{\C}{{\scriptscriptstyle C}}
\newcommand{\D}{{\scriptscriptstyle D}}
\newcommand{\pt}{\hspace{1pt}} 
\newcommand{\hp}{\hspace{0.5pt}} 
\newcommand{\npt}{\hspace{-1pt}} 
\newcommand{\nhp}{\hspace{-0.5pt}} 
\renewcommand{\Re}{\pt\mbox{Re}\pt} 
\renewcommand{\Im }{\pt\mbox{Im}\pt}
\newcommand{\sprod}{\npt\cdot\nhp}   
\newcommand{\vprod}{\npt\times\npt} 
\newcommand{\verteq}{\rotatebox{90}{$\,=$}}
\newcolumntype{L}[1]{>{\raggedright\let\newline\\\arraybackslash\hspace{0pt}}m{#1}}
\newcolumntype{C}[1]{>{\centering\let\newline\\\arraybackslash\hspace{0pt}}m{#1}}
\newcolumntype{R}[1]{>{\raggedleft\let\newline\\\arraybackslash\hspace{0pt}}m{#1}}
\pretocmd{\chapter}{\addtocontents{toc}{\protect\addvspace{15\p@}}}{}{}
\pretocmd{\section}{\addtocontents{toc}{\protect\addvspace{5\p@}}}{}{}
\renewcommand{\tocsection}[3]{%
  \indentlabel{\@ifnotempty{#2}{\bfseries\ignorespaces#1 #2\quad}}\bfseries#3}
\renewcommand{\tocsubsection}[3]{%
  \indentlabel{\@ifnotempty{#2}{\ignorespaces#1 #2\quad}}#3}
\newcommand\@dotsep{4.5}
\def\@tocline#1#2#3#4#5#6#7{\relax
  \ifnum #1>\c@tocdepth 
  \else
    \par \addpenalty\@secpenalty\addvspace{#2}%
    \begingroup \hyphenpenalty\@M
    \@ifempty{#4}{%
      \@tempdima\csname r@tocindent\number#1\endcsname\relax
    }{%
      \@tempdima#4\relax
    }%
    \parindent\z@ \leftskip#3\relax \advance\leftskip\@tempdima\relax
    \rightskip\@pnumwidth plus1em \parfillskip-\@pnumwidth
    #5\leavevmode\hskip-\@tempdima{#6}\nobreak
    \leaders\hbox{$\m@th\mkern \@dotsep mu\hbox{.}\mkern \@dotsep mu$}\hfill
    \nobreak
    \hbox to\@pnumwidth{\@tocpagenum{\ifnum#1=1\bfseries\fi#7}}\par
    \nobreak
    \endgroup
  \fi}
\renewcommand\csname r@tocindent0\endcsname{0pt}
\def\l@subsection{\@tocline{2}{0pt}{2.5pc}{5pc}{}}
\title{An analogue of the Gibbons--Hawking Ansatz for quaternionic K\"ahler spaces}
\author{Radu A. Iona\c{s}}
\date{}                                           
\newcommand{\Address}{{
  \bigskip\bigskip 
  \sffamily
            Radu A. Iona\c{s} \\[3pt]
\indent C.\,N. Yang Institute for Theoretical Physics \\
\indent Stony Brook University, Stony Brook, NY 11794, U.S.A.

}}
\begin{document}

\begin{abstract}
We show that the geometry of $4n$-dimensional quaternionic K\"ahler spaces with a locally free $\smash{ \mathbb{R}^{n+1} }$-action admits a Gibbons--Hawking-like description based on the Galicki--Lawson notion of quaternionic K\"ahler moment map. This generalizes to higher dimensions a four-dimensional construction, due to Calderbank and Pedersen, of self-dual Einstein manifolds with two linearly independent commuting Killing vector fields. As an application, we use this new Ansatz to give an explicit equivariant completion of the twistor space construction of the local c-map proposed by Ro\v{c}ek, Vafa and Vandoren. 
\end{abstract}

\maketitle

\thispagestyle{fancy}\rhead{YITP-\hp SB\pt-18\hp-40}


\tableofcontents

\newpage

\section{Introduction}

The Gibbons--Hawking Ansatz \cite{Gibbons:1979zt} is a method of constructing four-dimensional hyperk\"ahler spaces with a tri-Hamiltonian locally free $\mathbb{R}$-action in terms of the solutions of a Bogomolny-type differential equation whose consistency condition is a Laplace equation. It is one of the fundamental constructions in hyperk\"ahler geometry, with vast applications ranging from the construction of gravitational instantons to serving as an asymptotic model or some other type of limit for various geometries and moduli spaces. The original method of Gibbons and Hawking was generalized in \cite{MR953820} by Pedersen and Poon to $4n$-dimensional hyperk\"ahler spaces with a tri-Hamiltonian locally free $\mathbb{R}^n$-action; in what follows we will refer to such spaces as \textit{extended} Gibbons--Hawking spaces. A key feature of these constructions is the fact that the rank of the action is such that the $n$ orbit parameters of the action together with the $3n$ components of its associated hyperk\"ahler moment map provide a complete set of coordinates for the hyperk\"ahler space. Geometrically, the hyperk\"ahler structure is defined on the total space of an $\mathbb{R}^n$-bundle over an open subset of the space  $\smash{ \mathbb{R}^n \npt\otimes \mathbb{R}^3 }$, where it is given explicitly in terms of a Higgs field and $\mathbb{R}^n$-connection 1-form satisfying a generalized form of the Bogomolny equation. This construction is general: any $4n$-dimensional hyperk\"ahler space with a locally free tri-Hamiltonian $\mathbb{R}^n$-action can be proven to arise locally in this way.

In this paper we show that a close analogue of this Ansatz exists as well in quaternionic K\"ahler setting. Recall that hyperk\"ahler and quaternionic K\"ahler manifolds are $4n$-dimensional Riemannian manifolds with the holonomy groups of their Levi-Civita connections given by $Sp(n)$ and $\smash{ Sp(n) \times_{\mathbb{Z}_2} \! Sp(1) }$, respectively, or subgroups thereof (here, we extend these definitions to include the pseudo-Riemannian variants of these geometries resulting from replacing the unitary quaternionic group $Sp(n)$ with any one of its non-compact versions; also, for \mbox{$n=1$}, the holonomy definition is rather too unrestrictive in the quaternionic K\"ahler case, and one usually discards it: the natural analogues of quaternionic K\"ahler manifolds in four dimensions are self-dual Einstein manifolds). Both types of geometries are Einstein. Hyperk\"ahler geometry, however, is essentially distinguished from more general quaternionic K\"ahler one by having zero scalar curvature. This distinction notwithstanding, in specific circumstances it is nonetheless useful to think of the former as the zero scalar curvature limit of the latter. Another, less obvious way in which the two geometries are related was discovered by Swann, who showed in \cite{MR1096180} that over any quaternionic K\"ahler manifold one can construct a quaternionic bundle with fiber $\smash{ \mathbb{H}^{\times} \npt /\mathbb{Z}_2 }$, where $\smash{ \mathbb{H}^{\times} }$ denotes the multiplicative group of non-zero quaternions, the total space of which carries a natural hyperk\"ahler structure as well as a homothetic vector field: that is to say, a hyperk\"ahler cone structure. The power of Swann's result resides in its transformative potential: it gives a canonical way to radically reformulate a quaternionic K\"ahler geometry problem as a hyperk\"ahler geometry one. 

And this is precisely the path that we take in our search for a quaternionic K\"ahler analogue of the extended Gibbons--Hawking Ansatz: rather than ask the question directly about quaternionic K\"ahler spaces, we ask instead the corresponding question about hyperk\"ahler cones. Namely, we ask under what conditions an extended Gibbons--Hawking space possesses a hyperk\"ahler cone structure. For any extended Gibbons--Hawking space of real dimension \mbox{$4n+4$} (this dimension is chosen for later convenience), the base of its \mbox{$\mathbb{R}^{n+1}$-fibration} is, as we have stated above, an open subset of the space $\smash{ \mathbb{R}^{n+1} \npt\otimes \mathbb{R}^3 }$, which can be viewed as the space of configurations of \mbox{$n+1$} distinguishable points in $\mathbb{R}^3$. On this space, one can always define a natural quaternionic action by considering the rigid rotations and simultaneous scalings of such configurations which leave a given point, say, the origin, fixed. These transformations form indeed a quaternionic group since \mbox{$SO(3) \otimes \mathbb{R}^+ \npt  \cong \mathbb{H}^{\times} \npt /\mathbb{Z}_2$}. So then, a more specific interrogation would be when can this \mbox{$\mathbb{H}^{\times} \npt /\mathbb{Z}_2$}-action on the base of the \mbox{$\mathbb{R}^{n+1}$-fibration} be lifted to a genuine hyperk\"ahler cone structure on the total space, that is, on the extended Gibbons--Hawking space? This question is answered in Proposition~\ref{GH-HKC}, which states that a necessary and sufficient condition for that to happen is for the Higgs field of the extended Gibbons--Hawking space to be invariant at rigid rotations and scale with a certain weight under simultaneous scalings of the configurations. What remains now is to match this description of the hyperk\"ahler cone, in which the \mbox{$\mathbb{R}^{n+1}$}-bundle structure is manifest, to the Swann description, centered instead on the \mbox{$\smash{ \mathbb{H}^{\times} \npt /\mathbb{Z}_2 }$}-bundle structure. This is facilitated by a certain condensed quaternionic reformulation of the extended Gibbons--Hawking formulas that we introduce in \mbox{\S\,\ref{GT-quatern}}. It ultimately yields an explicit description of the geometry of the base of the Swann bundle, which is in this case a \mbox{$4n$-dimensional} quaternionic K\"ahler geometry with an inherited locally free isometric \mbox{$\mathbb{R}^{n+1}$-action}. 

The emerging picture is summarized in intrinsic terms in subsection~\ref{QK-Ansatz}. Its main features are as follows: The quaternionic K\"ahler structure is defined on the total space of an $\mathbb{R}^{n+1}$-bundle over the space of inequivalent (with respect to rigid rotations and simultaneous scalings) non-degenerate configurations of distinguishable $n+1$ points in $\mathbb{R}^3$, which we denote by $\smash{ \textrm{Im}\mathbb{HP}^{\pt n} }$ (and define precisely in subsection~\ref{ssec:ImHP^n}). A natural set of coordinates is given by the orbit parameters of the $\mathbb{R}^{n+1}$-action together with this action's associated Galicki--Lawson-type moment maps, multiplied by a scale. The latter can also be understood as inhomogeneous local coordinates on $\smash{ \textrm{Im}\mathbb{HP}^{\pt n} }$. The quaternionic K\"ahler metric, 2-forms and $SO(3)$ connection 1-forms are given explicitly in these coordinates in terms of a \textit{reduced} Higgs field and \mbox{$\mathbb{R}^{n+1}$-connection} 1-form satisfying a set of partial differential equations on $\smash{ \textrm{Im}\mathbb{HP}^{\pt n} }$ similar to the Bogomolny equations from the hyperk\"ahler case. These field equations admit also a dual formulation, a remarkable consequence of which is that the local quaternionic K\"ahler structure turns out to be  completely determined by a single real-valued function on $\smash{ \textrm{Im}\mathbb{HP}^{\pt n} }$ satisfying a set of linear partial differential constraints. This function is closely related to the hyperk\"ahler potential of the Swann bundle. (Swann bundles have the distinctive feature among hyperk\"ahler spaces that all three elements of a defining triplet of 2-forms\,---\,see \textit{e.g.}~Theorem~\ref{HK-criterion}\,---\,can be derived, as K\"ahler forms with respect to their corresponding complex structures, from the \textit{same} K\"ahler potential.) Very importantly, like in the hyperk\"ahler case, this picture is general: \textit{any} $4n$-dimensional quaternionic K\"ahler space with a locally free isometric $\mathbb{R}^{n+1}$-action arises locally in this way.

The main body of the paper is organized into five sections. In \mbox{\textbf{section~\ref{sec:HK_QK}}} we give a \textit{non-Riemannian} characterization of both hyperk\"ahler and quaternionic K\"ahler manifolds, in a spirit similar to that of \textit{exterior differential systems} \cite{MR1083148}. That is, in each case we formulate a criterion in which the metric is not among the fundamental objects defining the geometry but, rather, is a derived, composite object. The building blocks of either geometry may be considered instead to be triplets of non-degenerate and pointwise linearly independent 2-forms: globally defined in the hyperk\"ahler case, and locally defined in the quaternionic K\"ahler one. Such a triplet is required to satisfy two conditions: an algebraic condition, and an exterior differential one (Theorems~\ref{HK-criterion} and \ref{QK-criterion-1}). In the hyperk\"ahler case the latter condition is due to Hitchin \cite{MR887284} and requires that the three \mbox{2-forms} be symplectic. In the quaternionic K\"ahler case a corresponding condition was found by Alekseevsky, Bonan and Marchiafava \cite{MR1376149}. These conditions do not constrain the triplets uniquely. In the hyperk\"ahler case one has in fact a half 3-sphere's worth of equivalent triplets to choose from (the triplets can be rotated by $SO(3)$ transformations, whose group manifold is topologically $S^3/\mathbb{Z}_2$). In the quaternionic K\"ahler case, on the other hand, one regards the triplets as local frames for an $SO(3)$-bundle. The proofs we give to these criterions are new; we base them in both cases on the successive  application of two key identities, which we collect together in Lemma~\ref{ab}. We show also that \mbox{Alekseevsky \textit{et al.}'s} condition can be equivalently replaced by a Hermitian--Einstein-type condition for the curvature 2-form of the associated principal $SO(3)$-bundle, which takes as well the form of an exterior differential condition. This gives us the very useful equivalent quaternionic K\"ahler criterion of Theorem~\ref{QK-criterion-2}. In the remainder of the section we describe how certain symmetry conditions\,---\,namely, the Killing condition and what we call the homothetic Euler condition, which is the basic symmetry of hyperk\"ahler cones\,---\,fit into this framework. In spite of the perhaps unusual point of view, the material discussed in this preliminary section is for the most part known, and readers accustomed to the basic concepts of hyperk\"ahler and quaternionic K\"ahler geometry and interested mainly in the headline subject of the paper can safely skip ahead, to return maybe only for  occasional references.  

In \mbox{\textbf{section~\ref{sec:Sw_bdls}}} we review in detail Swann's quaternionic bundle construction. We clarify among other things the role of the hypercomplex structures on the quaternionic fiber, and review how Killing symmetries fit into this picture as well.

In \mbox{\textbf{section~\ref{sec:QK_analogue}}}, after many preparations, we finally address in full force the issue of finding a quaternionic K\"ahler analogue of the extended Gibbons--Hawking Ansatz, following the strategy exposed above. In the last part of the section we then specialize to four dimensions (\textit{i.e.}~\mbox{$n=1$}). The space $\smash{ \textrm{Im}\mathbb{HP}^{\pt 1} }$ is isomorphic to the complex upper half-plane, and we find that our formulas yield in this case precisely the description given by Calderbank and Pedersen in \cite{MR1950174} to the four-dimensional avatars of the quaternionic K\"ahler spaces we consider, which are self-dual Einstein spaces with two linearly independent commuting Killing vector fields. In this sense, therefore, our results can be thought of as a natural higher-dimensional generalization of those of Calderbank and Pedersen. 

In \mbox{\textbf{section~\ref{sec:HK-LT}}} we translate the description of hyperk\"ahler cones from the Gibbons--Hawking-type framework used up until now to the language of the Legendre transform approach of Lindstr\"om and Ro\v{c}ek \cite{Lindstrom:1983rt, MR877637}. This is useful particularly when one wants to solve the field equations by means of twistor methods. 

In \mbox{\textbf{section~\ref{sec:c-map}}}, as an application of our newfound Ansatz, we give a thorough and explicit account of the twistor construction of a quaternionic K\"ahler metric found in \cite{Ferrara:1989ik} by Ferrara and Sabharwal, in connection with the so-called \textit{local c-map} construction from string theory and supergravity. The twistor approach to this metric was initiated in \cite{Rocek:2005ij, Rocek:2006xb} by Ro\v{c}ek, Vafa and Vandoren, and developed further in \cite{Neitzke:2007ke}. The idea is that, if one goes from the Ferrara--Sabharwal space six dimensions higher to the twistor space (in the sense of \cite{MR877637}) of its Swann bundle, there the quaternionic K\"ahler metric information is encoded in a single\,---\,and simple\,---\,holomorphic symplectic gluing function. The challenge becomes then to show that the quaternionic K\"ahler metric can be retrieved from this holomorphic data. In a first stage, by making use of the general methods associated to the Legendre transform approach, one can come down two dimensions from the twistor space and determine from this function the geometry of the Swann bundle. This step has been generally already understood in the literature. The new results of section~\ref{sec:QK_analogue} can then be used to descend a further four dimensions to retrieve explicitly and in an equivariant manner the Ferrara--Sabharwal metric. This exercise clarifies a number of issues and cements the basis of the twistor-theoretic understanding of this important construction.

\section{Hyperk\"ahler vs.~quaternionic K\"ahler geometry} \label{sec:HK_QK}

\subsection{General aspects} \label{ssec:HK_QK_gen}

\subsubsection{}

In Berger's list of special holonomy manifolds, hyperk\"ahler and quaternionic K\"ahler manifolds are $4n$-dimen\-sio\-nal Riemannian manifolds with the holonomy group of their Levi-Civita connection a subgroup of the unitary quaternionic group $Sp(n)$, respectively the group $\smash{ Sp(n) \times_{\mathbb{Z}_2} \! Sp(1) }$.\footnote{\,An explicit embedding of $\smash{ Sp(n) \times_{\mathbb{Z}_2} \! Sp(1) }$ as a Lie subgroup of $SO(4n)$, the holonomy group of a general orientable $4n$-dimensional Riemannian manifold, is given as follows: regard $\mathbb{R}^{4n}$ as $\mathbb{H}^n$ and consider the linear endomorphisms on the latter given by the simultaneous left and right actions \mbox{$x_{\scriptscriptstyle I} \mapsto A_{\scriptscriptstyle IJ}\hp x_{\scriptscriptstyle J} \hp u^{-1}$} for any \mbox{$(x_{\scriptscriptstyle I})_{\scriptscriptstyle I = 1, \dots, n} \in \mathbb{H}^n$}, quaternionic unitary matrix $(A_{\scriptscriptstyle IJ})_{\scriptscriptstyle I, J = 1,\dots, n}$, and unit quaternion $u$; the summation convention over repeated indices is understood. This preserves the quaternionic norm on $\mathbb{H}^n$, and so the corresponding Euclidean norm on $\mathbb{R}^{4n}$, which thus exhibits it as an element of $O(4n)$, and in fact of $SO(4n)$. The $\mathbb{Z}_2$-quotient comes from the invariance under a simultaneous change of sign of both $A_{\scriptscriptstyle IJ}$ and $u$. Note also that 
for \mbox{$n > 1$}, \mbox{$Sp(n) \times_{\mathbb{Z}_2} \! Sp(1)$} is a maximal subgroup of $SO(4n)$ \cite{MR0244913}.}~Here we will distinguish between the two notions and assume, moreover, that hyperk\"ahler manifolds are \textit{not} subsummed to the quaternionic K\"ahler ones. There exist also pseudo-Riemannian versions of these manifolds which are obtained by replacing the group $Sp(n)$ with one of its non-compact versions $Sp(p,q)$ with $p+q=n$. In these notes we will assume that the notions of hyperk\"ahler and quaternionic K\"ahler manifolds include these variations as well. In four dimensions (that is, for $n=1$), due to the isomorphism $\smash{ Sp(1) \times_{\mathbb{Z}_2} \npt Sp(1) \cong SO(4) }$ the above definition of quaternionic K\"ahler manifolds is rather unrestrictive as it encompasses all orientable four-manifolds. In this case the holonomy condition is usually replaced by a curvature condition, and the natural four-dimensional analogues of quaternionic K\"ahler manifolds are considered to be the self-dual Einstein manifolds. In fact, one can show that quaternionic K\"ahler manifolds are always Einstein \cite{MR0231313}, while hyperk\"ahler ones are Einstein with vanishing scalar curvature\,---\,that is, Ricci-flat. 

The holonomy definition can be shown to be equivalent in the quaternionic K\"ahler case to the existence of a three-dimensional subbundle of the bundle of endomorphisms of the tangent bundle $\text{End}(TM)$ which is locally spanned by three almost complex structures \mbox{$I_1$, $I_2$, $I_3$}, forming the algebra of imaginary quaternions ($\smash{ I_1^2=I_2^2=I_3^2 = I_1I_2I_3 = -1 }$), 
and is preserved by the Levi-Civita connection $\nabla$. That is to say, there exist locally defined 1-forms $\theta_1$, $\theta_2$, $\theta_3 \in T^*\npt M$ such that
\begin{equation} \label{cd_Ii-QK}
\nabla_{\! X} I_i  = - \pt 2 \pt \varepsilon_{ijk} \pt \theta_j(X) \hp I_k
\end{equation}
for any vector field \mbox{$X \in TM$}, where the indices $i,j,k$ run over the values $1,2,3$, $\varepsilon_{ijk}$ denotes the anti-symmetric Levi-Civita symbol and the numerical factor is conventional. In four dimensions, the $\theta_i$ can be viewed as the self-dual part of the spin connection. 

In the hyperk\"ahler case, on the other hand, the holonomy definition is equivalent to the existence of a globally defined triplet of almost complex structures $I_1$, $I_2$, $I_3$ forming the algebra of imaginary quaternions, each element of which is individually preserved by the Levi-Civita connection:
\begin{equation} \label{cd_Ii-HK}
\nabla_{\! X} I_i  = 0
\end{equation}
for any vector field \mbox{$X \in TM$}. In contrast to the quaternionic K\"ahler setting, one can now show that the almost complex structures are in fact integrable and thus give rise to genuine complex structures. 

In either case, the metric $g$ can always be chosen to be simultaneously Hermitian with respect to each $I_i$, and by combining it with them one can form three 2-forms \mbox{$\omega_i(X,Y) = g(X,I_iY)$}, for any \mbox{$X,Y \in TM$}, which are defined locally in the quaternionic K\"ahler setting and globally in the hyperk\"ahler one. In four dimensions, these form a frame of the bundle of self-dual 2-forms. The above covariant differentiation properties readily translate for the 2-forms into 
\begin{equation} \label{cd_oi-QK}
\nabla_{\! X} \omega_i  = - \pt 2 \pt \varepsilon_{ijk} \pt \theta_j(X) \pt \omega_k
\end{equation}
in the quaternionic K\"ahler case and
\begin{equation}
\nabla_{\! X} \omega_i  = 0
\end{equation}
in the hyperk\"ahler one, for an arbitrary vector field \mbox{$X \in TM$}. Since the Levi-Civita connection is symmetric, these formulas imply further that
\begin{equation}
d \omega_i = - \pt 2 \pt \varepsilon_{ijk} \pt \theta_j \nhp \wedge \omega_k
\end{equation}
in the first case and
\begin{equation}
d\omega_i = 0
\end{equation}
in the second, where we now view $\omega_i$ and $\theta_i$ as differential forms. Note, however, that the reverse implication does not trivially hold. 

Assuming the dimension $n$ to be finite and the metric non-degenerate, since each $I_i$ is invertible (with inverse $-I_i$), it follows that each $\omega_i$, viewed as an element of $\smash{ \text{Hom}(T^*\npt M,TM) }$, is also invertible, with inverse $\smash{ \omega_i^{-1} \! \in \text{Hom}(TM,T^*\npt M) }$. A simple argument based on the quaternionic algebra shows then that we can write
\begin{equation}
I^{\phantom{.}}_1 = \omega_3^{\smash{-1}}\omega^{\phantom{1}}_2 \qquad\quad
I^{\phantom{.}}_2 = \omega_1^{\smash{-1}}\omega^{\phantom{1}}_3 \qquad\quad
I^{\phantom{.}}_3 = \omega_2^{\smash{-1}}\omega^{\phantom{1}}_1 \mathrlap{.}
\end{equation}

\subsubsection{}

In this approach, the three 2-forms are composite objects constructed from the metric and the almost complex structures. In what follows we will present an alternative approach in which this relationship is reversed, and \textit{they} rather than the latter are regarded as the fundamental building objects of the geometry. Our discussion will emphasize in particular the fact that the two geometries inform each other and can be treated in parallel, and that, although we have made a special point of distinguishing between them, it is often convenient in practice to think of hyperk\"ahler geometry as a limit case of quaternionic K\"ahler geometry, when the hallmark three-dimensional subbundle of the latter becomes trivial and the scalar curvature vanishes. 

The proofs of the subsequent two theorems rely crucially on the following technical lemma: 

\begin{lemma} \label{ab}
Let $M$ be a manifold with an almost complex structure $I$\,---\,that is, an endomorphism of the tangent bundle $TM$ such that \mbox{$I^2=-1$}\,---\,and let 
$$N_I(X,Y) = - \pt I^2[X,Y] + I([IX,Y] + [X,IY]) - [IX,IY]$$ 
for arbitrary vector fields \mbox{$X,Y \in TM$} be the associated Nijenhuis tensor. A real-valued 2-form $\omega$ on $M$ is
\begin{itemize}
\setlength\itemsep{0.4em}

\item[a)] of $(1,1)$ type with respect to $I$ if and only if 
$$\omega(X,IY) = \omega(Y,IX) \pt \stackrel{\smash[t]{\textup{def}}}{=} - \pt g(X,Y)$$
for any $X,Y \in TM$. In this case the following identity holds:
\begin{equation}
\omega(N_I(X,Y),Z) = d\omega(IX,IY,Z) - d\omega(X,Y,Z) + 2 \hp \nabla_{\! Z} \hp \omega(X,Y)
\end{equation}
where \smash{$\nabla$} is the Levi-Civita connection corresponding to the compatible metric $g$. 
\item[b)] of mixed $(2,0)+(0,2)$ type with respect to $I$ if and only if 
$$\omega(X,IY) = - \pt \omega(Y,IX) \pt \stackrel{\smash[t]{\textup{def}}}{=} - \pt \tilde{\omega}(X,Y)$$
for any $X,Y \in TM$. (The complex-valued 2-forms \mbox{$\omega + i\hp \tilde{\omega}$} and \mbox{$\omega - i\hp \tilde{\omega}$} are then of pure $(2,0)$ and $(0,2)$ type with respect to $I$, respectively.) In this case we have the identity
\begin{equation}
\omega(N_I(X,Y),Z) = d\omega(IX,IY,Z) - d\omega(X,Y,Z) + d\tilde{\omega}(IX,Y,Z) + d\tilde{\omega}(X,\mathrlap{IY,Z) .}
\end{equation}
\end{itemize}
\end{lemma}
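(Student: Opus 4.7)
The plan is to prove both parts in parallel, reducing all exterior-algebraic quantities appearing in the identities to expressions involving a torsion-free connection applied to $I$. For part (a), the bilinear form $g$ defined in the statement is automatically Hermitian with respect to $I$ as a consequence of purity in type $(1,1)$, and I would work with its Levi-Civita connection $\nabla$. For part (b), the identity to be established is purely exterior-differential, so any torsion-free connection will do as a computational tool.

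The algebraic characterizations are immediate: the purity conditions $\omega(IX,IY) = \omega(X,Y)$ and $\omega(IX,IY) = -\omega(X,Y)$ become, upon substituting $Y \mapsto IY$ and invoking $I^2 = -1$ together with the antisymmetry of $\omega$, the stated symmetry, respectively antisymmetry, of $(X,Y) \mapsto \omega(X,IY)$; the converse directions are obtained by reversing the substitution.

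For the differential identities I would deploy two formulas valid for any torsion-free $\nabla$: the standard cyclic expression
\begin{equation*}
d\omega(X,Y,Z) = (\nabla_X\omega)(Y,Z) + (\nabla_Y\omega)(Z,X) + (\nabla_Z\omega)(X,Y),
\end{equation*}
and, obtained by expanding the Lie brackets in $N_I$ via $[A,B] = \nabla_A B - \nabla_B A$ and repeatedly using $I^2 = -1$,
\begin{equation*}
N_I(X,Y) = (\nabla_{IY}I)X - (\nabla_{IX}I)Y + I(\nabla_X I)Y - I(\nabla_Y I)X.
\end{equation*}
In part (a), the key observation is that the Hermitian property $g(IA,IB) = g(A,B)$, combined with the anticommutation $(\nabla_Z I)I = -I(\nabla_Z I)$ obtained by differentiating $I^2 = -1$, yields $(\nabla_Z\omega)(IX,IY) = -(\nabla_Z\omega)(X,Y)$; once transposed, this produces the $2\nabla_Z\omega(X,Y)$ correction. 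The remaining terms in $d\omega(IX,IY,Z) - d\omega(X,Y,Z)$ are all of the form $g(A, (\nabla_W I)B)$ and match, after applying $g(IA,B) = -g(A,IB)$ and the anticommutation of $\nabla I$ with $I$, the expansion of $\omega(N_I(X,Y),Z) = g(N_I(X,Y), IZ)$ term by term. Part (b) follows the same pattern, using the derived identity $(\nabla_W\tilde\omega)(A,B) = -(\nabla_W\omega)(A,IB) - \omega(A,(\nabla_W I)B)$ to re-express $d\tilde\omega$-contributions as combinations of $d\omega$- and $\nabla I$-terms; the sign flip in the analogue of the Hermitian identity (now $(\nabla_Z\omega)(IX,IY) = +(\nabla_Z\omega)(X,Y)$) produces the $d\tilde\omega(IX,Y,Z) + d\tilde\omega(X,IY,Z)$ combination in place of the $2\nabla_Z\omega$ term.

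The main obstacle is purely combinatorial: the careful bookkeeping of signs and of the many commutation and anticommutation relations among $I$, $\nabla I$, $g$, $\omega$, and $\tilde\omega$. The natural way to tame this is to normalize every intermediate expression to a canonical form such as $g((\nabla_W I)A, B)$ or $\omega((\nabla_W I)A, B)$, whose behaviour under the elementary substitutions $A \leftrightarrow B$, $A \mapsto IA$, and $W \mapsto IW$ is governed by a handful of universal rules (Hermitian symmetry of $g$, antisymmetry of $\omega$, and $(\nabla_W I)I + I(\nabla_W I) = 0$) that can be applied mechanically to collapse all the terms.
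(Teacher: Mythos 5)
Your proof is correct, but it takes a genuinely different computational route from the paper's. The paper stays at the level of Lie brackets: for part (a) it expands $\nabla_{Z}\hp\omega(X,Y) = -\,g(\nabla_{Z}(IX),Y) - g(\nabla_{Z}X,IY)$ and evaluates each piece with the Koszul formula, while the $d\omega$-terms are expanded via the bracket formula \eqref{d-2-form}; part (b) is done by applying \eqref{d-2-form} to both $d\omega$ and $d\tilde{\omega}$ and trading $\tilde{\omega}$ for $\omega$. You instead work entirely in the covariant-derivative formalism, combining the torsion-free cyclic identity $d\omega(X,Y,Z) = (\nabla_X\omega)(Y,Z) + (\nabla_Y\omega)(Z,X) + (\nabla_Z\omega)(X,Y)$ (which is consistent with the paper's normalization) with the expansion $N_I(X,Y) = (\nabla_{IY}I)X - (\nabla_{IX}I)Y + I(\nabla_X I)Y - I(\nabla_Y I)X$, and then reduce everything to the canonical forms $g(A,(\nabla_W I)B)$ or $\omega(A,(\nabla_W I)B)$. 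I have checked that this closes in both parts. What your route buys is uniformity\,---\,both parts are handled by one mechanical procedure\,---\,and it makes transparent that the identity in (b) is independent of the auxiliary torsion-free connection; the paper's Koszul route is more self-contained in that it derives the needed Levi-Civita data from scratch rather than presupposing the covariant calculus.

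One caveat on part (b): your parenthetical claim that $(\nabla_Z\omega)(IX,IY) = +(\nabla_Z\omega)(X,Y)$ is false for a general torsion-free connection; differentiating $\omega(IX,IY) = -\,\omega(X,Y)$ gives $(\nabla_Z\omega)(IX,IY) = -(\nabla_Z\omega)(X,Y) - \omega((\nabla_Z I)X,IY) - \omega(IX,(\nabla_Z I)Y)$. This does not sink the argument, because that identity is not what the computation actually uses: when you carry out your own collapse, the $(\nabla_Z\omega)(IX,IY)$ and $-(\nabla_Z\omega)(X,Y)$ contributions of $d\omega(IX,IY,Z) - d\omega(X,Y,Z)$ cancel directly against the $\nabla_Z$-terms of $d\tilde{\omega}(IX,Y,Z)$ and $d\tilde{\omega}(X,IY,Z)$, the residual $\omega(\,\cdot\,,(\nabla_Z I)\,\cdot\,)$ pieces cancel among themselves, and the surviving $\nabla_{IX},\nabla_{IY},\nabla_X,\nabla_Y$ terms match the four terms of $\omega(N_I(X,Y),Z)$ exactly as in part (a). So the heuristic you offer for why the $2\hp\nabla_Z\omega$ term is replaced by the $d\tilde{\omega}$ combination is off, but the mechanical procedure you actually propose goes through.
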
 

\begin{proof}
Part $(a)$ is essentially Proposition 4.6 from ch.\,IX, sec.\,4\vphantom{, p.\,148} of reference \cite{MR0238225} (the numerical factors differ due to differing normalization conventions for differential forms). Let us reproduce here the proof briefly. On one hand, we transform the last term on the right-hand side of the identity we want to prove as follows
{\allowdisplaybreaks
\begin{align}
\nabla_{\! Z} \hp \omega(X,Y) & = - \, g((\nabla_{\! Z} I)X,Y) \\[2pt]
& = - \, g(\nabla_{\! Z}(IX),Y) + g(I\hp \nabla_{\! Z} X,Y) \nonumber \\[2pt]
& = - \, g(\nabla_{\! Z}(IX),Y) - g(\nabla_{\! Z} X,IY) \nonumber 
\end{align}
}%
and then apply for each resulting term the Koszul formula for the Levi-Civita connection
\begin{align}
2 \pt g(\nabla_{\! X} Y,Z) & = X(g(Y,Z)) + Y(g(X,Z)) - Z(g(X,Y)) \\
& \ - \pt g([Y,Z],X) - g([X,Z],Y) + g([X,Y],Z) \mathrlap{.} \nonumber
\end{align}
On the other hand, for the remaining two terms on the right-hand side we use the identity
\begin{align} \label{d-2-form}
d\omega(X,Y,Z) & = X(\omega(Y,Z)) - Y(\omega(X,Z)) + Z(\omega(X,Y)) \\[1pt]
& \ - \pt \omega([Y,Z],X) + \omega([X,Z],Y) - \omega([X,Y],Z) \mathrlap{.} \nonumber
\end{align}
The result follows then by isolating the expression on the left-hand side and then cancelling the remaining terms by making use of the relationship between $\omega$ and $g$, and the properties which follow from it. 

The formula at part $(b)$ can be verified in a relatively more straightforward manner by applying the identity \eqref{d-2-form} to both the $d\omega$ and the $d\tilde{\omega}$ terms, and then systematically replacing in the resulting expression $\tilde{\omega}$ with $\omega$ by using some version of the defining relation for the former. 
\end{proof}

With this lemma in hand we can now return to our initial discussion. On the hyperk\"ahler side we have the following criterion, which is in essence a version of Lemma 6.8 of reference \cite{MR887284}:

\begin{theorem} \label{HK-criterion}
A finite-dimensional manifold $M$ is hyperk\"ahler if and only if it possesses a triplet of non-degenerate and pointwise linearly independent 2-forms \mbox{$\omega_1$, $\omega_2$, $\omega_3$} satisfying simultaneously:
\begin{itemize}
\setlength\itemsep{0.4em}

\item \underline{an algebraic condition}: if we define 
\begin{equation} 
I^{\phantom{.}}_1 = \omega_3^{\smash{-1}}\omega^{\phantom{1}}_2, \
I^{\phantom{.}}_2 = \omega_1^{\smash{-1}}\omega^{\phantom{1}}_3, \ 
I^{\phantom{.}}_3 = \omega_2^{\smash{-1}}\omega^{\phantom{1}}_1 \
\in \textup{End}(TM)
\end{equation}
then we have $I_1^2 = I_2^2 = I_3^2 = -1$.
\item \underline{an exterior differential condition}: the 2-forms are symplectic, \textit{i.e.},
\begin{equation}
d\omega_1 = d\omega_2 = d\omega_3 = 0 \rlap{.}
\end{equation}
\end{itemize}
\end{theorem}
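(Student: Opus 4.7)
The forward direction would merely organize observations already recorded in the preliminary discussion: on a hyperk\"ahler manifold the three parallel complex structures $I_i$ give rise to globally defined closed 2-forms $\omega_i(X,Y) = g(X, I_iY)$, and cyclically inverting this relation produces $I_1 = \omega_3^{-1}\omega_2$ and its cyclic analogues, each squaring to $-1$ by the quaternion algebra. The substantive content is the converse, which I would prove in three stages.

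\textit{Stage 1 (algebra).} The first task is to upgrade the three isolated squaring conditions $I_i^2 = -1$ to the full quaternion algebra. In a local frame they amount to the matrix identities $\omega_j \omega_k^{-1} \omega_j = -\omega_k$ and cyclic permutations thereof. By direct rewriting---using these identities to replace, for instance, $\omega_2 \omega_3^{-1}$ by $-\omega_3 \omega_2^{-1}$ and $\omega_2 \omega_1^{-1} \omega_2$ by $-\omega_1$---one collapses the triple product $I_1 I_2 I_3 = \omega_3^{-1}\omega_2\, \omega_1^{-1}\omega_3\, \omega_2^{-1}\omega_1$ to a single copy of $-1$. Combined with the squares, this produces the complete algebra $I_i I_j = \varepsilon_{ijk} I_k - \delta_{ij}$. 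The same matrix manipulations also reveal that $\omega_j(X, I_i Y) = \pm\omega_k(X,Y)$ whenever $j \neq i$ (hence antisymmetric in $X,Y$), while $\omega_i(X, I_iY)$ is symmetric in $X,Y$.

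\textit{Stage 2 (metric and Hodge types).} I would then define the candidate metric by $g(X,Y) = -\omega_i(X, I_iY)$. Symmetry is the property just observed, independence of $i$ is forced by the quaternion algebra, and non-degeneracy is inherited from non-degeneracy of $\omega_i$ and invertibility of $I_i$. By construction $g$ is Hermitian with respect to each $I_i$, so $\omega_i$ is of type $(1,1)$ w.r.t.\ $I_i$ in the sense of Lemma~\ref{ab}$(a)$, while each $\omega_j$ with $j \neq i$ is of mixed type $(2,0)+(0,2)$ w.r.t.\ $I_i$ in the sense of Lemma~\ref{ab}$(b)$, with associated form $\tilde\omega_j = \pm\omega_k$.

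\textit{Stage 3 (integrability and parallelism).} This stage is a clean appeal to Lemma~\ref{ab}. Applying part~$(b)$ to $\omega_j$ and $I_i$ (with $j \neq i$), the right-hand side vanishes because $d\omega_j = d\omega_k = 0$; this leaves $\omega_j(N_{I_i}(X,Y), Z) = 0$ for every $Z$, and non-degeneracy of $\omega_j$ forces the Nijenhuis tensor of $I_i$ to vanish, so each $I_i$ is integrable. Feeding $N_{I_i} = 0$ and $d\omega_i = 0$ into part~$(a)$ of the same lemma then gives $\nabla_Z \omega_i = 0$ for the Levi-Civita connection of $g$, equivalently $\nabla I_i = 0$; the holonomy of $g$ therefore lies in $Sp(n)$, yielding the hyperk\"ahler condition. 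The main obstacle is Stage~1: it is initially surprising that three scalar relations $I_i^2 = -1$ imposed in isolation carry enough information to recover the entire noncommutative multiplication table of the imaginary quaternions together with the cross-pairing $\omega_j(X, I_iY) = \pm\omega_k(X,Y)$, and this verification requires a somewhat intricate chain of matrix rewrites. Once past that, the remainder of the proof is essentially mechanical.
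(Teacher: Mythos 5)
Your proposal is correct and follows essentially the same route as the paper: derive the full imaginary quaternion algebra from the three squaring conditions via the matrix identities $\omega_j\omega_k^{-1}\omega_j=-\omega_k$, use transposition to extract a well-defined symmetric metric $g(X,Y)=-\omega_i(X,I_iY)$, identify the Hodge types of the $\omega_j$ relative to each $I_i$, and then apply Lemma~\ref{ab}$(b)$ (closedness kills the right-hand side, non-degeneracy kills $N_{I_i}$) followed by Lemma~\ref{ab}$(a)$ to get $\nabla\omega_i=0$. The only cosmetic difference is that you obtain the cross-pairings $\omega_j(X,I_iY)=\pm\omega_k(X,Y)$ directly from the matrix rewrites before introducing $g$, whereas the paper derives them afterwards from the quaternion algebra and the metric; the content is identical.
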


\begin{proof}

Observe that although the algebraic condition formally requires only that \mbox{$I_1$, $I_2$, $I_3$} be almost complex structures, it fully implies in fact that they satisfy the entire imaginary quaternionic algebra. Indeed, since they square to minus the identity we have
{\allowdisplaybreaks
\begin{align} \label{I_i-two_exprs}
I^{\phantom{.}}_1 = \omega_3^{\smash{-1}}\omega^{\phantom{1}}_2 = - \pt \omega_2^{\smash{-1}}\omega^{\phantom{1}}_3 \\
I^{\phantom{.}}_2 = \omega_1^{\smash{-1}}\omega^{\phantom{1}}_3 = - \pt \omega_3^{\smash{-1}}\omega^{\phantom{1}}_1 \nonumber \\
I^{\phantom{.}}_3 = \omega_2^{\smash{-1}}\omega^{\phantom{1}}_1 = - \pt \omega_1^{\smash{-1}}\omega^{\phantom{1}}_2 \nonumber
\end{align}
}%
from which we get immediately that
\mbox{$
I^{\phantom{.}}_1 I^{\phantom{.}}_2 I^{\phantom{.}}_3 = 
( - \pt \omega_2^{\smash{-1}}\omega^{\phantom{1}}_3)
(- \pt \omega_3^{\smash{-1}}\omega^{\phantom{1}}_1)
(- \pt \omega_1^{\smash{-1}}\omega^{\phantom{1}}_2)
= - 1
$}.

Let us define now the following sections of $\text{Hom}(T^*\nhp M, TM)$ or, equivalently, rank-2 contravariant tensors: 
{\allowdisplaybreaks
\begin{align} \label{g1g2g3}
g_1 = - \pt \omega^{\phantom{1}}_1I^{\phantom{1}}_1 = - \pt \omega^{\phantom{1}}_1 \omega_3^{\smash{-1}}\omega^{\phantom{1}}_2 = \omega^{\phantom{1}}_1\omega_2^{\smash{-1}}\omega^{\phantom{1}}_3 \\
g_2 = - \pt \omega^{\phantom{1}}_2I^{\phantom{1}}_2 = - \pt \omega^{\phantom{1}}_2\omega_1^{\smash{-1}}\omega^{\phantom{1}}_3 = \omega^{\phantom{1}}_2\omega_3^{\smash{-1}}\omega^{\phantom{1}}_1 \nonumber \\
g_3 = - \pt \omega^{\phantom{1}}_3I^{\phantom{1}}_3 = - \pt \omega^{\phantom{1}}_3\omega_2^{\smash{-1}}\omega^{\phantom{1}}_1 = \omega^{\phantom{1}}_3\omega_1^{\smash{-1}}\omega^{\phantom{1}}_2 \mathrlap{.} \nonumber
\end{align}
}%
Since the $\omega_i$'s are all antisymmetric, by comparing for example the third expression for $g_1$ with the fourth one for $g_2$ we see that \mbox{$g_1^T = g^{\phantom{'}}_2$}, where the superscript $T$ denotes transposition. Similarly, $\smash{ g_2^T = g^{\phantom{'}}_3 }$ and $\smash{ g_3^T = g^{\phantom{'}}_1 }$, from which it follows immediately that \mbox{$g_1 = g_2 = g_3 =$} a symmetric rank-2 contravariant tensor. In other words, the algebraic condition also implies that we can define a \textit{metric} on $M$ by
\begin{equation} \label{g-def}
g(X,Y) = - \pt \omega_1(X,I_1Y) = - \pt \omega_2(X,I_2Y) = - \pt \omega_3(X,I_3Y)
\end{equation}
for any $X,Y \in TM$. The minus signs have been chosen in order to ensure compatibility with our original definition of the 2-forms $\omega_i$.  Such a metric is automatically Hermitian with respect to each $I_i$ (this is possible only when \mbox{$\dim M = 4n$}), and its signature is in general of type $\smash{ (4n_+,4n_-) }$, with $\smash{n_+ \nhp + n_- = n }$, so the manifold $M$ is in general pseudo-Riemannian.

Remark now that in order to prove that the manifold $M$ carries a hyperk\"ahler structure it suffices to prove that each $\omega_i$ is covariantly constant with respect to the Levi-Civita connection of the metric $g$. The covariant constancy of the $I_i$ follows then right away.

This can be shown in two steps. First, note that based on the quaternionic algebra we can write successively 
\mbox{$\omega_2(X,I_1Y) = g(X,I_2I_1Y) = - \pt g(X,I_3Y) = - \pt \omega_3(X,Y)$},
and since the last expression is antisymmetric at the exchange of $X$ and $Y$ it follows that $\omega_2$ is of mixed $(2,0)+(0,2)$ type with respect to $I_1$. By part $(b)$ of Lemma \ref{ab} we then have
\begin{align} \label{cl-b}
\omega_2(N_{I_1}(X,Y),Z) & = d\omega_2(I_1X,I_1Y,Z) - d\omega_2(X,Y,Z) \\
& \hspace{1pt} + d\omega_3(I_1X,Y,Z) + d\omega_3(X,I_1Y,Z) \mathrlap{.} \nonumber
\end{align}
As $\omega_2$ and $\omega_3$ are by assumption closed, the right-hand side of this equation vanishes, and since moreover $\omega_2$ is non-degenerate it follows that the Nijenhuis tensor of $I_1$ vanishes. By the Newlander--Nirenberg theorem, $I_1$ is then an integrable complex structure. 

Second, as $\omega_1(X,I_1Y) = - \pt g(X,Y)$ is symmetric at the exchange of $X$ and $Y$, $\omega_1$ is of type $(1,1)$ with respect to $I_1$.  By part $(a)$ of Lemma \ref{ab} we can then write 
\begin{equation} \label{cl-a}
2 \hp \nabla_{\! Z} \hp \omega_1(X,Y) = \omega_1(N_{I_1}(X,Y),Z) - d\omega_1(I_1X,I_1Y,Z) + d\omega_1(X,Y,Z) \mathrlap{.}
\end{equation}
The vanishing of the Nijenhuis tensor for $I_1$ together with the closure of $\omega_1$ imply then that $\omega_1$ is covariantly constant with respect to the Levi-Civita connection. The covariant constancy of $\omega_2$ and $\omega_3$, as well as the integrability of $I_2$ and $I_3$, follow by cyclically permuting the indices in the argument. 
\end{proof}

\begin{remark}
The triplets of 2-forms defining a given hyperk\"ahler metric are not unique. To be more precise, the conditions of the theorem are invariant under the following two types of constant linear transformations:
\begin{itemize}
\setlength\itemsep{0.2em}

\item[1)] \mbox{$\omega_i \mapsto \lambda \pt \omega_i$} for any non-vanishing constant $\smash{\lambda \in \mathbb{R}^{\times}}$. The corresponding hyperk\"ahler metric rescales then from $g$ to \mbox{$\lambda \hp g$}. 
\item[2)] \mbox{$\omega_i \mapsto R_{ij} \hp \omega_j$} for any constant rotation matrix \mbox{$R_{ij} \in SO(3)$}. These transformations leave the metric unchanged. Thus, for any triplet of 2-forms $\omega_i$ satisfying the conditions of the theorem one automatically has a half 3-sphere's worth of such triplets giving the same hyperk\"ahler metric (recall that, topologically, \mbox{$SO(3) \cong S^3/\mathbb{Z}_2$}).
\end{itemize}


\noindent Out of these, the only invariance which is perhaps not immediately obvious is that of the algebraic condition at $SO(3)$ rotations. The considerations in the first part of the proof make it clear that the algebraic condition is equivalent to the existence of a triplet \mbox{$I_i \in \text{End}(TM)$} and a metric $g$ such that \mbox{$\omega_i \hp I_j = \varepsilon_{ijk} \pt \omega_k - \delta_{ij} \hp g$}. Then, since $SO(3)$ transformations preserve both the $\varepsilon$-symbol and the $\delta$-symbol  (the $S$ and $O$ in $SO(3)$, respectively), this relation is invariant under simultaneous rotations \mbox{$\omega_i \mapsto R_{ij} \hp \omega_j$} and \mbox{$I_i \mapsto R_{ij} I_j$} while $g$ stays unchanged. That is to say, for each rotated set of $\omega_i$'s there exists a rotated set of $I_i$'s satisfying the same algebraic condition and determining the same metric.
\end{remark}

On the quaternionic K\"ahler side, on the other hand, we have the following version of Proposition 3 from reference \cite{MR1376149}:

\begin{theorem} \label{QK-criterion-1}
A finite-dimensional manifold $M$ is quaternionic K\"ahler if and only if it possesses a rank-3 $SO(3)$-subbundle $Q$ of the bundle of non-degenerate 2-forms on $M$ and an open covering $\pt\mathcal{C}$ of $M$ on each element $U$ of which one can pick a local frame \mbox{$\omega_1$, $\omega_2$, $\omega_3$} for $Q|_{U}$ satisfying simultaneously 
\begin{itemize}
\setlength\itemsep{0.4em}

\item \underline{an algebraic condition}: if we define 
\begin{equation}
I^{\phantom{.}}_1 = \omega_3^{\smash{-1}}\omega^{\phantom{1}}_2, \
I^{\phantom{.}}_2 = \omega_1^{\smash{-1}}\omega^{\phantom{1}}_3, \ 
I^{\phantom{.}}_3 = \omega_2^{\smash{-1}}\omega^{\phantom{1}}_1 \
\in \textup{End}(TM)
\end{equation}
then we have $I_1^2 = I_2^2 = I_3^2 = -1$.
\item \underline{an exterior differential condition}: 
there exist locally defined 1-forms \mbox{$\theta_1$, $\theta_2$, $\theta_3$} such that 
\begin{equation}
d\omega_i = - \hp 2 \pt \varepsilon_{ijk\,} \theta_j \wedge \omega_k \rlap{.}
\end{equation}
\end{itemize}
\end{theorem}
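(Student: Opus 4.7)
\smallskip

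The plan is to mirror closely the proof of Theorem~\ref{HK-criterion}, using Lemma~\ref{ab} as the main technical engine, but now picking up non-vanishing right-hand sides from the twisted closure condition $d\omega_i = -2\varepsilon_{ijk}\theta_j\wedge\omega_k$. The forward direction is essentially what was sketched before the theorem's statement: on a quaternionic K\"ahler manifold, a local orthonormal frame $I_1,I_2,I_3$ of the defining three-dimensional subbundle yields $\omega_i(X,Y)=g(X,I_iY)$, the algebraic condition is immediate from the quaternion algebra, and the differential condition follows from \eqref{cd_oi-QK} since the Levi-Civita connection is torsion-free. So the task is really the converse.

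For the converse, I would repeat verbatim the opening moves of the proof of Theorem~\ref{HK-criterion}: the algebraic condition forces the full imaginary-quaternion relations on the $I_i$, and the formul{\ae} \eqref{g1g2g3} produce a well-defined, symmetric, Hermitian compatible (pseudo-)metric $g$ on $M$, with $g(X,Y)=-\omega_i(X,I_iY)$ for each $i$. To conclude that $M$ is quaternionic K\"ahler, it then suffices to prove that the rank-3 subbundle $Q$ spanned by the $\omega_i$ is preserved by the Levi-Civita connection $\nabla$ of $g$, with connection 1-forms given precisely by the $\theta_i$ of the hypothesis, i.e.\ that
\begin{equation*}
\nabla_{\! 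Z}\,\omega_i = -\pt 2\pt \varepsilon_{ijk}\,\theta_j(Z)\,\omega_k.
\end{equation*}

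The computation splits into two Lemma~\ref{ab} applications, just as in the hyperk\"ahler case. First, since $\omega_2(X,I_1Y)=g(X,I_2I_1Y)=-g(X,I_3Y)=-\omega_3(X,Y)$, the 2-form $\omega_2$ is of mixed $(2,0)+(0,2)$ type with respect to $I_1$ with companion $\tilde\omega_2=\omega_3$. Applying part $(b)$ of Lemma~\ref{ab} and substituting the twisted closure identities for $d\omega_2$ and $d\omega_3$, every term on the right-hand side becomes a wedge of some $\theta_j$ with some $\omega_k$ evaluated on triples involving one or two insertions of $I_1$. Expanding with the identity $\omega_i(I_1X,I_1Y)$ (which reduces to $\pm\omega_i(X,Y)$ or $\pm\omega_{k\ne i}(X,Y)$ depending on $i$ via the quaternion algebra) collapses the expression into a form linear in the $\theta_j$'s, giving an explicit closed formula for $\omega_2(N_{I_1}(X,Y),Z)$, and hence (by non-degeneracy of $\omega_2$) for the Nijenhuis tensor $N_{I_1}$ itself, in terms of the $\theta_j$ and the $I_i$.

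Second, since $\omega_1$ is of type $(1,1)$ with respect to $I_1$, part $(a)$ of Lemma~\ref{ab} expresses $2\,\nabla_{\! Z}\pt\omega_1(X,Y)$ as a combination of $\omega_1(N_{I_1}(X,Y),Z)$ and two pieces of $d\omega_1$. Substituting the formula for $N_{I_1}$ obtained above and the twisted closure formula for $d\omega_1$, and once again using the $I_i$–$\omega_j$ algebra to eliminate all the $I_1$ insertions, the expression should reduce cleanly to $-\pt 2\pt(\theta_2(Z)\,\omega_3-\theta_3(Z)\,\omega_2)(X,Y)$, which is the desired identity for $i=1$. Cyclic permutation of indices yields the cases $i=2,3$. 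From this one reads off at the same time that $\nabla_{\!Z}I_i=-\pt 2\pt\varepsilon_{ijk}\theta_j(Z)\,I_k$, establishing that $Q$ is $\nabla$-parallel, and therefore the holonomy sits inside $Sp(n)\times_{\mathbb{Z}_2}\! Sp(1)$.

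The serious work and the main obstacle is the bookkeeping inside step one. There are a dozen or so wedge-product terms to expand, and to finish one must patiently use the identities $\omega_i(I_jX,I_jY)=\omega_i(X,Y)$ for $i=j$ together with the $\varepsilon_{ijk}$-laced analogues for $i\ne j$ (which are just repackagings of the quaternion algebra together with the definition of $g$) to rewrite everything in a common basis. Once that is done, however, the collapse to the advertised form of $\nabla\omega_i$ is forced, and the remainder of the proof is formal. One should also note, as in the remark following Theorem~\ref{HK-criterion}, that the algebraic condition is manifestly preserved under the fiberwise $SO(3)$ action $\omega_i\mapsto R_{ij}\omega_j$, while the differential condition is preserved provided the $\theta_i$ transform as an $\mathfrak{so}(3)$ connection, which is exactly the bundle-theoretic statement that the $\theta_i$ furnish local connection 1-forms on the principal $SO(3)$-bundle associated to $Q$.
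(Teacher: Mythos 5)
Your proposal is correct and follows essentially the same route as the paper's own proof: the algebraic condition is exploited exactly as in the hyperk\"ahler case to build the tri-Hermitian metric, then part $(b)$ of Lemma~\ref{ab} applied to $\omega_2$ with respect to $I_1$ (feeding in the twisted closure identities) yields the explicit form of $N_{I_1}$, which is then inserted into part $(a)$ for $\omega_1$ to produce $\nabla_{\!Z}\,\omega_1 = -2\hp\theta_2(Z)\pt\omega_3 + 2\hp\theta_3(Z)\pt\omega_2$, with cyclic permutation finishing the argument. Your closing remark on the $SO(3)$-equivariance of the data plays the role of the paper's explicit patching check that $g$ is globally defined and the $I_i$ assemble into an associated $SO(3)$-subbundle of $\textup{End}(TM)$.
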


\begin{proof}

The structure of the proof mirrors closely that of the previous theorem. In particular, by virtually the same arguments as before the algebraic condition implies that the almost complex structures \mbox{$I_1$, $I_2$, $I_3$} satisfy the imaginary quaternionic algebra, and that, moreover, a tri-Hermitian metric $g$ can be defined by means of the same equation \eqref{g-def}. 

The essential difference with respect to the hyperk\"ahler case is that these quantities are now defined \textit{locally}, and we need to check in addition that they give rise to the requisite global structures. For that, let us consider two arbitrary overlapping open sets \mbox{$U, V \nhp \in \pt\mathcal{C}$}. The two corresponding local frames $\omega_{iU}$ and $\omega_{iV}$ of the vector bundle $Q$ are related on the intersection $U \cap V$ by means of a transition function $R_{ij}$ which is a local $SO(3)$ rotation: \mbox{$\omega_{iV} = R_{ij} \pt \omega_{jU}$}. Adjoining to the symbol of each locally defined quantity its domain of definition we have then by essentially the same argument as in the Remark following the previous theorem \mbox{$(R_{im} \pt \omega_{mU}) (R_{jn} \hp I_{nU}) = \varepsilon_{ijk} \hp (R_{kl} \pt \omega_{lU}) - \delta_{ij} \pt g_U$}. By considering the off-diagonal terms in this relation we can see immediately that we must have \mbox{$I_{iV} = R_{ij} \hp I_{jU}$}, that is, the almost complex structures are local sections\,---\,and in fact, frames\,---\,of an associated $SO(3)$-subbundle of $\text{End}(TM)$. On the other had, from the diagonal terms we get that \mbox{$g_V = g_U$}, that is, the metric is globally defined. 

To prove that the manifold $M$ carries a quaternionic K\"aher structure it suffices to show that the action of the Levi-Civita covariant derivative with respect to the metric $g$ on the 2-forms $\omega_i$ is of the form \eqref{cd_oi-QK}. The corresponding covariant differentiation property \eqref{cd_Ii-QK} of the almost complex structures $I_i$ follows then immediately.

By the same reasoning as in the hyperk\"ahler case, $\omega_2$ and $\omega_1$ are of mixed $(2,0)+(0,2)$ respectively pure $(1,1)$ type with respect to $I_1$. So then by parts $(b)$ and $(a)$ of Lemma~\ref{ab} the same identities \eqref{cl-b} and \eqref{cl-a} hold now, too. From the first one, by inserting the expressions for $d\omega_2$ and $d\omega_3$ given by the current exterior differential condition we retrieve, after a purely algebraic computation, the following form for the Nijenhuis tensor of $I_1$:
\begin{align}
N_{I_1}(X,Y) & = 2 \hp (\theta_2(X) - \theta_3(I_1X)) I_2Y + 2 \hp (\theta_2(I_1X) + \theta_3(X)) I_3Y \\
& - \pt 2 \hp (\theta_2(Y) \hp - \theta_3(I_1Y)) \hp I_2X \hp - 2 \hp (\theta_2(I_1Y) \hp + \theta_3(Y)) \hp I_3X \mathrlap{.} \nonumber
\end{align}
Plugging then this into the second identity together with the remaining expression for $d\omega_1$ gives us after another series of algebraic manipulations that
\begin{equation}
\nabla_{\! Z} \hp \omega_1(X,Y)  = - \pt 2 \hp\theta_2(Z) \pt \omega_3(X,Y) + 2\hp \theta_3(Z) \pt \omega_2(X,Y) \mathrlap{.}
\end{equation}
Permuting the indices cyclically in this argument yields the rest of the components of the desired property \eqref{cd_oi-QK}. 
\end{proof}

\begin{remark}
Quaternionic K\"aher manifolds are particular examples of \textit{quaternionic manifolds} (for a detailed review see \textit{e.g.}~\cite{MR1441871}). These are manifolds $M$ which carry a three-dimensional subbundle of $\text{End}(TM)$ with fibers spanned at each point by a basis $\{I_1,I_2,I_3\}$ whose elements satisfy the algebra of imaginary quaternions, and which, in addition, admit a torsion-free connection preserving this structure. If it exists, such a connection, called an Oproiu connection, is not unique. Their so-called structure tensor takes the form
\begin{equation}
\frac{1}{6} \sum_{i=1}^3 N_{I_i}(X,Y) = \sum_{i=1}^3 (\hp \theta_{\hp \mathrlap{i}}{}^{\text{Op}}(X) \hp I_iY - \theta_{\hp \mathrlap{i}}{}^{\text{Op}}(Y) \hp I_iX )
\end{equation}
where the triplet of 1-forms $\smash{ \theta^{\text{Op}}_{\smash{i}} }$ are the Oproiu connection 1-forms. 

From the above formula for the Nijenuis tensor for $I_1$ and its cyclic permutations one can verify that in the quaternionic K\"ahler case the structure tensor is indeed of this form, with
\begin{equation}
\theta_{\hp \mathrlap{i}}{}^{\text{Op}} = \frac{2}{3} \hp \theta_i - \frac{1}{3} \hp \varepsilon_{ijk} \hp \theta_j I_k \mathrlap{.}
\end{equation}
(Here we use the convention that the $I_i$ act on vector fields from the left and dually, \pagebreak on 1-forms, from the right.) Note that these can be rewritten as
\begin{equation}
\theta_{\hp \mathrlap{i}}{}^{\text{Op}} = \theta_i + \xi I_i
\qquad\text{with}\qquad
\xi = \frac{1}{3} (\theta_1I_1 + \theta_2I_2 + \theta_3I_3)
\end{equation}
which essentially means (see \textit{e.g.}~\cite{MR2200267}) that in the quaternionic K\"ahler case the Oproiu connection is equivalent to the Levi-Civita one.
\end{remark}

The following alternative version of the criterion enunciated in Theorem \ref{QK-criterion-1} also holds:

\begin{theorem} \label{QK-criterion-2}
The exterior differential condition of Theorem \ref{QK-criterion-1} can be replaced with the following condition:

\begin{itemize}
\item The principal $SO(3)$-bundle $\mathcal{Q}$ associated to the bundle $Q$ admits a connection with curvature 2-form equal, up to a non-vanishing constant, to $\omega_i$. That is, there exist locally defined connection 1-forms $\theta_i$ and a constant $s \neq 0$ such that
\begin{equation} \label{Einst_cond}
d\theta_i + \varepsilon_{ijk\,} \theta_j \wedge \theta_k  = s\, \omega_i \mathrlap{.}
\end{equation} 
\end{itemize}
\end{theorem}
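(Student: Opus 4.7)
The plan is to establish both directions of the equivalence under the standing algebraic condition of Theorem~\ref{QK-criterion-1}. Writing $F_i = d\theta_i + \varepsilon_{ijk}\theta_j \wedge \theta_k$ for the $\mathfrak{so}(3)$ curvature 2-form, the task reduces to showing that $d\omega_i + 2\varepsilon_{ijk}\theta_j \wedge \omega_k = 0$ holds for some $\theta_i$ if and only if $F_i = s\omega_i$ holds for the same $\theta_i$ and a constant $s \neq 0$.

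The reverse direction is essentially a short computation. Starting from $d\theta_i + \varepsilon_{ijk}\theta_j \wedge \theta_k = s\omega_i$ with $s$ constant, apply $d$ to both sides. The left-hand side becomes $\varepsilon_{ijk}(d\theta_j \wedge \theta_k - \theta_j \wedge d\theta_k) = 2\varepsilon_{ijk}d\theta_j \wedge \theta_k$ (the 1-form/2-form commutation sign combined with a relabelling of dummy indices produces the factor of two). Substituting $d\theta_j = s\omega_j - \varepsilon_{jlm}\theta_l \wedge \theta_m$, the triple-$\theta$ piece vanishes thanks to $\varepsilon_{ijk}\varepsilon_{jlm} = \delta_{im}\delta_{kl} - \delta_{il}\delta_{km}$ combined with $\theta_a \wedge \theta_a = 0$, leaving $2s\varepsilon_{ijk}\omega_j \wedge \theta_k = s\,d\omega_i$. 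Dividing by $s \neq 0$ and rearranging wedge factors yields $d\omega_i = -2\varepsilon_{ijk}\theta_j \wedge \omega_k$ at once.

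For the forward direction, I would apply $d$ to $d\omega_i + 2\varepsilon_{ijk}\theta_j \wedge \omega_k = 0$ and, after using $d^2 = 0$ and simplifying the $\theta \wedge \theta \wedge \omega$ contributions by means of the same $\varepsilon$-identity, obtain the pointwise constraint $\varepsilon_{ijk}F_j \wedge \omega_k = 0$. The algebraic heart of the argument is then to deduce from this that $F_i = s_{ij}\omega_j$ with $s_{ij}$ a multiple of $\delta_{ij}$, hence $F_i = s\omega_i$. This step relies on the $\mathrm{Sp}(n)\cdot\mathrm{Sp}(1)$-irreducible decomposition of $\Lambda^2 T^*M \otimes Q$: the combination $\varepsilon_{ijk}F_j \wedge \omega_k$ is, up to a factor, the equivariant projection of $F \otimes \omega$ onto the components lying outside the trivial summand of $Q \otimes Q$. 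Once $F_i = s\omega_i$ is in hand, the second Bianchi identity $dF_i + 2\varepsilon_{ijk}\theta_j \wedge F_k = 0$ combined with the original differential condition reduces to $ds \wedge \omega_i = 0$; the nondegeneracy of the $\omega_i$ forces $s$ to be locally constant, and the requirement $s \neq 0$ is precisely what distinguishes the quaternionic K\"ahler case from the hyperk\"ahler one.

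The principal difficulty is the algebraic step in the forward direction: passing from the relation $\varepsilon_{ijk}F_j \wedge \omega_k = 0$ to the conclusion $F_i = s\omega_i$. In four dimensions this constraint only forces the matrix of coefficients of $F$ in the $\omega_j$-basis to be symmetric, so additional structural input is required. A robust route is to appeal to Theorem~\ref{QK-criterion-1} to conclude already that $M$ is quaternionic K\"ahler, and then to invoke the classical curvature result that on any such manifold the curvature of the associated $\mathrm{Sp}(1)$-bundle is automatically proportional to the triplet $\omega_i$, with constant of proportionality determined by the scalar curvature.
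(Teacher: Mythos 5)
Your proposal is correct and, in its final form, follows essentially the same route as the paper: the direction from the Einstein-type condition \eqref{Einst_cond} to the exterior differential condition is the Bianchi consistency computation you carry out explicitly, and for the converse you ultimately fall back---exactly as the paper does---on first invoking Theorem~\ref{QK-criterion-1} to conclude that $M$ is quaternionic K\"ahler and then citing the classical curvature result (Alekseevsky's theorem for $n>1$, the equivalence with the Einstein equation for $n=1$) that the $Sp(1)$-curvature is then automatically a constant multiple of the $\omega_i$. Your intermediate observation that applying $d$ to the exterior differential condition yields $\varepsilon_{ijk}\hp F_j\wedge\omega_k=0$ is correct but, as you rightly diagnose, insufficient on its own (in four dimensions it only forces symmetry of the coefficient matrix of the self-dual part of $F$), so it does not alter the essential structure of the argument.
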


\begin{proof}

The fact that this condition implies the second condition of Theorem \ref{QK-criterion-1} is straightforward: the latter is a Bianchi-type consistency condition for the former. The converse implication follows, for $n > 1$, from a result first shown by Alekseevsky, see \textit{e.g.} part $(iii)$ of Theorem 5.7 in \cite{MR1441871}. For $n=1$, the condition above is equivalent to the Einstein equation. In either case, one can identify
\begin{equation}
s = \frac{R_g}{8 n(n+2)}
\end{equation}
where $R_g$ is the scalar curvature of the metric $g$. ($s$ is called \textit{reduced scalar curvature}, and the extra $1/2$ factor with respect to the definition in \cite{MR1441871} is a matter of convention stemming from our unusual normalization of the $SO(3)$ connection 1-forms.)
\end{proof}

\subsection{Killing vector fields} \label{ssec:Killing}

Next, we want to explore how symmetry conditions, and in particular the Killing condition, translate in the framework in which triplets of 2-forms rather than the metric and the (almost) complex structures play the fundamental role. In order to address this question, we need to recall first some background material culminating with a general theorem due to B.~Kostant concerning Killing vector fields on Riemannian manifolds with special holonomy. 

\subsubsection{}

Let $M$ be a Riemannian manifold with metric $g$. By definition, we call an endomorphism \mbox{$\Omega \in \text{End\hp}(TM)$} skew-symmetric if 
\begin{equation}
g(X,\Omega \hp Y) = - \pt g(Y,\Omega X) \pt \stackrel{\smash[t]{\textup{def}}}{=} \hp \omega(X,Y) \mathrlap{.}
\end{equation}
for any pair \mbox{$X,Y \in TM$}. Skew-symmetric endomorphisms from $\text{End}(TM)$ are in one-to-one correspondence with 2-forms on $M$ by way of the metric. On the space $\text{SkewEnd}(TM)$ of such endomorphisms one can define a natural inner product by
\begin{equation} \label{inpr}
\langle \hp \Omega_1, \Omega_2 \hp \rangle = - \pt \frac{1}{\dim M} \pt \text{trace}\hp(\hp \Omega_1 \Omega_2) \mathrlap{.}
\end{equation}
This is in general non-degenerate and, in Riemannian signature, positive definite. 
Moreover, a choice of orthonormal basis on the tangent space $T_pM$ at a point \mbox{$p \in M$} gives a vector space isometry $\smash{ T_pM \longrightarrow  \mathbb{R}^{\hp \dim M} }$, with the latter space endowed with the standard Euclidean metric. This determines in turn an isomorphism between $\text{SkewEnd}(T_pM)$ and $\mathfrak{so}(\dim M)$, the Lie algebra of skew-symmetric endomorphisms of $\smash{ \mathbb{R}^{\hp \dim M} }$.

\subsubsection{}

For any vector field \mbox{$X \in TM$} let us define the operator 
\mbox{$A_X= \mathcal{L}_X - \nabla_{\! X}$}
measuring the difference between the Lie derivative and the Levi-Civita covariant derivative along~$X$. Also, for any two vector fields \mbox{$X,Y \in TM$}, let
\mbox{$R_{X,Y} = [\hp \nabla_{\! X},\nabla_{\nhp Y}] - \nabla_{\nhp\smash{[X,Y]}}$}
be the usual Riemannian curvature operator of the Levi-Civita connection. As derivations acting linearly on any tensor field on $M$ and vanishing on functions, both of these operators are linear algebraic rather than differential in nature and are represented by tensors. The tensors are in either case endomorphisms of the tangent bundle $TM$. For $\smash{ A_X }$, this is \mbox{$\nabla \nhp X$}, the covariant derivative of $X$, which one may view as the endomorphism of $TM$ defined by \mbox{$\nabla \nhp X Z = \nabla_{\nhp Z}\nhp X$} for any \mbox{$Z \in TM$}. For $\smash{ R_{X,Y} }$, it is the Riemann curvature endomorphism $R(X,Y)$. So for example, on functions, vector fields and endomorphisms of the tangent bundle we have, respectively, 
{\allowdisplaybreaks
\begin{alignat}{6} \label{ops_on_tens}
& A_X f && = 0                                     &\qquad& R_{X,Y}f && = 0 &\qquad& \forall \  f && \in C^{\infty}(M) \\[-1pt]
& A_X Z && = - \pt \nabla \nhp X Z &\qquad& R_{X,Y}Z && = R(X,Y) Z  &\qquad& \forall \ Z && \in TM  \nonumber \\[-1pt]
& A_X \Omega && = - [\hp \nabla \nhp X, \Omega \hp] &\qquad& R_{X,Y}\Omega && = [R(X,Y),\Omega \hp] &\qquad& \forall \ \Omega && \in \text{End}(TM) \nonumber 
\end{alignat}
}%
where the square brackets denote commutators. What is more, \mbox{$R(X,Y) \in \text{SkewEnd}(TM)$} for all \mbox{$X,Y \in TM$}, whereas \mbox{$\nabla \nhp X \in \text{SkewEnd}(TM)$} if and only if $X$ is a Killing vector field. 

In fact, in regard to Killing vector fields we have, moreover, the following properties (see \textit{e.g.}~Proposition 2.6 in ch.\,VI, sec.\,2 of \cite{MR0152974}):

\begin{lemma}
Let $M$ be a Riemannian manifold. 
\begin{itemize}
\setlength\itemsep{0.2em}

\item[1)] If $X$ is a Killing vector field on $M$ then
\begin{equation} \label{AX_Kill_1}
[\hp \nabla_{\nhp Y},A_X] = R_{X,Y}
\end{equation}
for any vector field \mbox{$Y \in TM$}.

\item[2)] If both $X$ and $Y$ are Killing vector fields on $M$ then we have
\begin{equation} \label{R-A-rep}
R_{X,Y} = [A_X,A_Y] - A_{\smash{[X,Y]}} \mathrlap{.}
\end{equation}
\end{itemize}
\end{lemma}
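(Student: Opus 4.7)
The plan is to reduce both assertions to a single fundamental identity: for any Killing vector field $X$, the Lie derivative of the Levi-Civita connection along $X$ vanishes, equivalently
\begin{equation*}
[\mathcal{L}_X, \nabla_Y] = \nabla_{[X,Y]}
\end{equation*}
for every $Y \in TM$. This is a standard fact, provable by noting that $\mathcal{L}_X \nabla$ is itself a torsion-free connection that preserves $g$ when $X$ is Killing, so uniqueness of the Levi-Civita connection forces the two to coincide. Since the operators $A_X$ and $R_{X,Y}$ are tensorial derivations annihilating scalar functions (as recorded in \eqref{ops_on_tens}), both identities are completely determined by their action on vector fields; so I would verify them only there, and the extension to tensor fields of arbitrary type follows automatically.

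For part (1), I would expand the commutator directly from $A_X = \mathcal{L}_X - \nabla_X$:
\begin{equation*}
[\nabla_Y, A_X] = [\nabla_Y, \mathcal{L}_X] - [\nabla_Y, \nabla_X] = -\nabla_{[X,Y]} + [\nabla_X, \nabla_Y],
\end{equation*}
where the first term was rewritten using the Killing identity above. Comparison with the definition $R_{X,Y} = [\nabla_X, \nabla_Y] - \nabla_{[X,Y]}$ yields \eqref{AX_Kill_1} at once.

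For part (2), note first that the bracket $[X,Y]$ of two Killing fields is again Killing, so $A_{[X,Y]}$ is skew-symmetric and the Killing identity applies to $[X,Y]$ as well. Bilinear expansion of the commutator then gives
\begin{equation*}
[A_X, A_Y] = [\mathcal{L}_X, \mathcal{L}_Y] - [\mathcal{L}_X, \nabla_Y] - [\nabla_X, \mathcal{L}_Y] + [\nabla_X, \nabla_Y].
\end{equation*}
The first piece equals $\mathcal{L}_{[X,Y]}$ unconditionally, and the Killing identity applied separately to $X$ and to $Y$ converts the two mixed terms into $\nabla_{[X,Y]}$ and $-\nabla_{[Y,X]} = \nabla_{[X,Y]}$, respectively. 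Collecting terms and subtracting $A_{[X,Y]} = \mathcal{L}_{[X,Y]} - \nabla_{[X,Y]}$ leaves
\begin{equation*}
[A_X, A_Y] - A_{[X,Y]} = -\nabla_{[X,Y]} + [\nabla_X, \nabla_Y] = R_{X,Y},
\end{equation*}
as required. The only genuine content here is the Killing identity $\mathcal{L}_X \nabla = 0$; once that is in hand, the rest is pure symbolic bookkeeping, and I expect no substantive obstacle beyond keeping track of signs in the bilinear expansion of $[A_X, A_Y]$.
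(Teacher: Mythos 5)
Your proof is correct and follows exactly the paper's route: both rest on the single identity $[\mathcal{L}_X,\nabla_Y]=\nabla_{[X,Y]}$ for Killing $X$, after which the two relations are obtained by formally expanding the commutators from the definitions of $A_X$ and $R_{X,Y}$. The paper merely asserts that the expansion is immediate; you have written it out, and the bookkeeping (including the factor $-2\nabla_{[X,Y]}$ from the two mixed terms in part (2), which the subtraction of $A_{[X,Y]}$ reduces to $-\nabla_{[X,Y]}$) checks out.
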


\begin{proof}
The relations follow immediately from the definitions upon using the fact that the Lie derivative with respect to a Killing vector field commutes with the Levi-Civita connection. Formally, this can be expressed as follows: if $X$ is a Killing vector field, then \mbox{$[\mathcal{L}_X,\nabla_{\nhp Y}] = \nabla_{\nhp\smash{[X,Y]}}$} for any vector field $Y$.
\end{proof}

\noindent Note that in terms of the associated skew-symmetric endomorphisms the operatorial relation \eqref{AX_Kill_1} can be equivalently written as
\begin{equation} \label{Kill_R-prop}
\nabla_Y (\nabla \nhp X) = R(Y,\nhp X) \mathrlap{.}
\end{equation}
This is sometimes known as \textit{Kostant's formula}.

\subsubsection{}

The isometries of a Riemannian manifold form a Lie group (Myers--Steenrod). On another hand, we have the notion of \textit{holonomy group}. Given a connected Riemannian manifold $M$, its holonomy group at a point \mbox{$p \in M$} is by definition the set of linear endomorphisms of the tangent space $T_pM$ obtained by parallel transporting tangent vectors with respect to the Levi-Civita connection around piecewise smooth loops based at $p$. This set has naturally the structure of a group. The \textit{restricted holonomy group} at $p$ is the subgroup coming from contractible loops. The latter is a connected Lie subgroup of the group of orthogonal transformations of $T_pM$. By choosing an orthonormal basis for $T_pM$ we may identify it with a Lie subgroup of $SO(\dim M)$. This subgroup depends on both the choice of basis and of base point $p$, but its conjugacy class depends on neither. The possible holonomy groups of simply-connected irreducible Riemannian manifolds have been classified by M.~Berger. 

The relation between the holonomy group and the (continuous part of the) isometry group of a Riemannian manifold is described by the following result: 

\begin{theorem}[Kostant \cite{MR0084825}] \label{Kostant_th}
Let $M$ be a Riemannian manifold and $\mathfrak{h}$ be the Lie algebra of the restricted holonomy group at a point \mbox{$p \in M$}. If $X$ is a Killing vector field on $M$, then the skew-symmetric endomorphism of the tangent space $T_pM$ determined by $\nabla \nhp X$ belongs to the normalizer of $\mathfrak{h}$ in $\mathfrak{so}(\dim M)$. Moreover, if $M$ is orientable and compact then this belongs simply to $\mathfrak{h}$.
\end{theorem}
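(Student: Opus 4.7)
My plan is to exploit the fact that the flow $\phi_t$ of $X$ consists of isometries, and therefore preserves the Levi-Civita connection and parallel transport. The strategy is to manufacture a smooth one-parameter family $\Phi_t \in O(T_pM)$ with $\Phi_0 = \mathrm{id}$ taking values in the normalizer $N_{O(T_pM)}(H_p^0)$ of the restricted holonomy group at $p$. Differentiating such a family at the origin automatically yields an element of the normalizer Lie algebra $N_{\mathfrak{so}(T_pM)}(\mathfrak{h})$, and the rest of the argument reduces to identifying this derivative with $\nabla X|_p$.

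The natural choice is $\Phi_t := \tau_t \circ (\phi_t)_*|_p$, where $\tau_t\colon T_{\phi_t(p)}M \to T_pM$ denotes parallel transport back along the orbit $s \mapsto \phi_s(p)$. That $\Phi_t$ normalizes $H_p^0$ rests on two standard facts: conjugation by $(\phi_t)_*|_p$ maps $H_p^0$ onto $H_{\phi_t(p)}^0$ (because the $\phi_t$-image of a contractible loop at $p$ is a contractible loop at $\phi_t(p)$ whose holonomy is the pushforward conjugate), while conjugation by $\tau_t$ maps $H_{\phi_t(p)}^0$ back onto $H_p^0$ (the standard transport-conjugation property between holonomy groups at different basepoints). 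Composing, $\Phi_t$ lands in $N_{O(T_pM)}(H_p^0)$ for all small $t$, and hence $\dot\Phi_0 \in N_{\mathfrak{so}(T_pM)}(\mathfrak{h})$.

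To identify the derivative, extend an arbitrary $Y_p \in T_pM$ to a vector field $Y$ along the orbit by $Y|_{\phi_s(p)} := (\phi_s)_*|_p Y_p$, so that $[X, Y] = 0$ along the orbit. Then $\Phi_t Y_p = \tau_t\, Y|_{\phi_t(p)}$ is by definition a parallel-transport comparison of the values of $Y$ along the curve $c(s) = \phi_s(p)$, and its derivative at $t = 0$ equals $\nabla_X Y|_p$. The torsion-free property combined with $[X, Y] = 0$ then yields $\nabla_X Y|_p = \nabla_Y X|_p = (\nabla X)|_p\, Y_p$, so $\dot\Phi_0 = \nabla X|_p$ as desired. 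Kostant's formula \eqref{Kill_R-prop} is consistent with this picture, stating that the covariant derivative of this endomorphism is a curvature endomorphism and hence lies in $\mathfrak{h}$ by the Ambrose--Singer theorem.

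The refinement for compact orientable $M$, that $\nabla X|_p$ actually lies in $\mathfrak{h}$ rather than merely in its normalizer, is where the main technical difficulty sits. I would approach it by decomposing $TM$ via the de Rham theorem into parallel irreducible factors and invoking Berger's classification: in each irreducible entry of the list, the normalizer of $\mathfrak{h}$ in $\mathfrak{so}$ differs from $\mathfrak{h}$ by at most a small central piece (for instance the K\"ahler rotation generator in the K\"ahler case, or analogous scalars in the quaternionic K\"ahler and other cases). Ruling out a nonzero central contribution to $\nabla X|_p$ is precisely where compactness and orientability must be used in an essential way, presumably via a Bochner-type integration argument on the closed manifold whose vanishing average forces the offending central component to vanish identically. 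This is the step I expect to require the bulk of the delicate work.
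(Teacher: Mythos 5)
The main (normalizer) statement: your argument is correct but takes a genuinely different route from the paper's. The paper works infinitesimally: it splits $\nabla X = \nabla X^{\mathfrak{h}} + \nabla X^{\mathfrak{h}^{\perp}}$ orthogonally with respect to the trace inner product, hits both sides with $\nabla_Y$, uses Kostant's formula $\nabla_Y(\nabla X) = R(Y,X)$ together with the Ambrose--Singer theorem to place the left-hand side in $\mathfrak{h}$, and concludes (since parallel transport preserves the splitting) that $\nabla X^{\mathfrak{h}^{\perp}}$ is parallel, hence centralizes $\mathfrak{h}$, hence $\nabla X$ normalizes it. You instead work at the group level, exhibiting $\nabla X|_p$ as $\dot\Phi_0$ for the curve $\Phi_t = \tau_t\circ(\phi_t)_*$ inside the normalizer of $H^0_p$ in $O(T_pM)$; the two conjugation facts you invoke are correct, and the identification $\dot\Phi_0 = \nabla_X Y|_p = \nabla_Y X|_p$ via torsion-freeness and a flow-invariant extension of $Y_p$ is sound. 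Your route avoids Kostant's formula and Ambrose--Singer entirely; its only unstated ingredients are that the normalizer of the \emph{connected} group $H^0_p$ is a closed subgroup of $O(T_pM)$ whose Lie algebra is exactly $N_{\mathfrak{so}}(\mathfrak{h})$ (true, but worth a line). What the paper's route buys, and yours does not, is the stronger intermediate fact that $\nabla X^{\mathfrak{h}^{\perp}}$ is \emph{parallel} --- and that is precisely the input needed for the compact case.

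For the compact orientable refinement, your sketch is not a proof, and I would steer you away from the proposed route: the de Rham decomposition requires passing to the universal cover, Berger's list excludes symmetric factors, the claim that the normalizer exceeds $\mathfrak{h}$ by ``at most a small central piece'' is not uniformly true in the reducible case, and the decisive vanishing argument is left entirely open. (The paper also defers this step to Kostant's original article, so you are not being held to a higher standard than the text; but your sketch points in a harder direction than necessary.) The clean argument needs no classification at all: once one knows $B := \nabla X^{\mathfrak{h}^{\perp}}$ is parallel, the divergence of the vector field $BX$ computes to a nonzero multiple of $\langle \nabla X, B\rangle = \langle B, B\rangle = |B|^2$, which is pointwise nonnegative in Riemannian signature; integrating over the closed manifold forces $B \equiv 0$. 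If you want a complete write-up, graft the paper's parallelism observation onto your group-level argument and finish with this one-line divergence-theorem computation.
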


\begin{proof}

As a skew-symmetric endomorphism of the tangent bundle, $\nabla \nhp X$ can be decomposed uniquely into two orthogonal components with respect to the inner product \eqref{inpr}, one belonging to $\mathfrak{h}$ and the other to its orthogonal complement in $\mathfrak{so}(\dim M)$: 
\begin{equation}
\nabla \nhp X = \nabla \nhp X^{\hp\mathfrak{h}} + \nabla \nhp X^{\hp\mathfrak{h}^{\smash{\perp}}} \! \mathrlap{.}
\end{equation}
Suppose then we act with $\nabla_Y$ for some vector field $Y$ simultaneously on both sides of this equation. One the left-hand side we obtain, by way of the property \eqref{Kill_R-prop}, $R(Y,\nhp X)$, which, by the Ambrose-Singer theorem, belongs to $\mathfrak{h}$. On the other hand, since the inner product is preserved by parallel transport, one can argue that $\smash{ \nabla_Y(\nabla \nhp X^{\hp\mathfrak{h}}) \in \mathfrak{h} }$ and $\smash{ \nabla_Y (\nabla \nhp X^{\hp\mathfrak{h}^{\smash{\perp}}} \nhp) \in \mathfrak{h}^{\perp}}$ for all $Y$. The inner product being non-degenerate, the intersection $\smash{ \mathfrak{h} \cap \mathfrak{h}^{\perp} }$ is trivial, and so it follows that one must have
\mbox{$\nabla_Y (\nabla \nhp X^{\hp\mathfrak{h}^{\smash{\perp}}} \nhp) = 0$}.
This means in particular that $\smash{\nabla \nhp X^{\hp\mathfrak{h}^{\smash{\perp}}}}$ is invariant at parallel transport around a closed loop and thus belongs to the centralizer of $\mathfrak{h}$ in $\smash{ \mathfrak{so}(\dim M) }$. The first part of the theorem follows then immediately.

For further details as well as the proof of the fact that when $M$ is orientable and compact $\smash{\nabla \nhp X^{\hp\mathfrak{h}^{\smash{\perp}}}}$ vanishes we direct the interested reader to the original paper \cite{MR0084825}, or the review in ch.\,VI of \cite{MR0152974}.
\end{proof}

\subsubsection{}

Let us return now to the particular case of quaternionic K\"ahler and hyperk\"ahler ma\-ni\-folds, for which \mbox{$\dim M = 4n$}. Remark that we have now at our disposal three more skew-symmetric endomorphisms of the tangent bundle, namely \mbox{$I_1$, $I_2$, $I_3$}. Since $\textup{SkewEnd}(T_pM)$ is isomorphic in either case, for any point \mbox{$p \in M$}, to the fundamental representation of the Lie algebra $\mathfrak{so}(4n)$, it inherits this one's structure. Thus, it naturally splits into three subspaces isomorphic to the $\mathfrak{sp}(n)$ and $\mathfrak{sp}(1)$ Lie subalgebras of $\mathfrak{so}(4n)$, and their complement in $\mathfrak{so}(4n)$, respectively. The second subspace is spanned by \mbox{$I_1$, $I_2$, $I_3$} and, moreover, any element from the first subspace commutes with any element from the second. What is more, the three subspaces are mutually orthogonal with respect to the inner product \eqref{inpr}. 

These algebraic properties can be seen explicitly for instance within the so-called $E$--$H$ formalism of \cite{MR664330}. Let us recall briefly its main aspects. For quaternionic K\"ahler manifolds one can associate in principle a locally defined vector bundle to each representation of the holonomy group $\smash{ Sp(n) \times_{\mathbb{Z}_2} \! Sp(1) }$. In particular, the complexified cotangent bundle $\smash{ T^*_{\mathbb{C}}M }$ splits locally into a tensor product \mbox{$E \otimes H$} of two such vector bundles corresponding to the standard complex representations of rank $2n$ and $2$ of $Sp(n)$ and $Sp(1)$, respectively. For \mbox{$n=1$}, in four dimensions, where the holonomy definition is side-stepped, one takes instead $E$ and $H$ to be the two spinor bundles of $M$. The bundles $E$ and $H$ come equipped with natural symplectic forms\,---\,constant anti-symmetric sections \mbox{$\varepsilon_E \in \Lambda^2E$} and \mbox{$\varepsilon_H \in \Lambda^2H$}. In the hyperk\"ahler case, which in this context is most profitably  viewed as a quaternionic K\"ahler limit case, the same split structure exists, except that the bundle $H$ is now trivial. 

Any $SO(4n)$-module decomposes into direct sums of irreducible $\smash{ Sp(n) \times_{\mathbb{Z}_2} \! Sp(1) }$-mo\-dules, which are  tensor products of even total number of $Sp(n)$ and $Sp(1)$-modules. In particular, this is the case for the bundle \mbox{$\Lambda^2T^*\npt M$}. Any 2-form $\omega$ thus decomposes as
\begin{equation}
\omega = \omega^{\pt\mathfrak{sp}(n)} + \omega^{\pt\mathfrak{sp}(1)} + \omega^{\hp\mathfrak{cpl}}
\end{equation}
where, in a local $E$--$H$ frame, we have
{\allowdisplaybreaks
\begin{alignat}{2}
& \omega^{\pt\mathfrak{sp}(n)} \! && = \omega_E \otimes \varepsilon_H,\, \text{with $\omega_E \pt \in \pt S^2E \hp \cong \mathfrak{sp}(n)$} \\
& \omega^{\pt\mathfrak{sp}(1)} \! && = \varepsilon_E \otimes \omega_H,\, \text{with $\omega_H \in S^2H \cong \mathfrak{sp}(1)$} \nonumber \\
& \mathrlap{\omega^{\hp\mathfrak{cpl}} \in \Lambda^2_0E \otimes S^2H.} \nonumber
\end{alignat}
}%
Here, $S$ and $\Lambda$ denote symmetric respectively anti-symmetric tensor products, and $\Lambda_0^2E$ denotes the kernel of the contraction of $\Lambda^2E$ with the inverse of the symplectic form $\varepsilon_E$ (this kernel is trivial for \mbox{$n=1$}). Note that the quaternionic K\"ahler/hyperk\"ahler 2-forms $\omega_i$ are of pure $\mathfrak{sp}(1)$ type, and they provide a frame for the respective subbundle of \mbox{$\Lambda^2T^*\npt M$}.

On another hand, in the same $E$--$H$ frame the metric factorizes locally as
\mbox{$g = \varepsilon_E \otimes \varepsilon_H$}.
Recalling then that any skew-symmetric endomorphism $\Omega$ of $TM$ is related to a corresponding 2-form $\omega$ by $\smash{ \Omega = g^{-1}\omega }$, the statements above, and in particular the orthogonality claim, can be verified directly through simple algebraic calculations. 

\subsubsection{}

The curvature tensor also admits an irreducible $E$--$H$ decomposition \cite[Theorem 3.1]{MR664330}. In what follows, though, we will only need the fact that the curvature endomorphism of a quaternionic K\"ahler manifold satisfies the commutation property
\begin{equation} \label{[Ii,R]-eq}
[\pt I_i,R(X,Y)] = 2\hp s \, \varepsilon_{ijk} \, \omega_j(X,Y) I_k \mathrlap{.}
\end{equation}
For hyperk\"ahler manifolds the right-hand side vanishes, consistent with taking the limit to \mbox{$s=0$}. In the quaternionic K\"ahler case this is essentially the consistency condition for the differential equation \eqref{cd_Ii-QK}, with the Einstein condition \eqref{Einst_cond} taken into account. In the hyperk\"ahler case it is simply the consistency condition for the corresponding equation \eqref{cd_Ii-HK}. In either case its structure can be easily understood by purely algebraic arguments from the Ambrose--Singer theorem. 

Note in particular that if we multiply this equality from either side with an $I_l$ and then take the trace of the result we obtain\,---\,using successively the cyclicity property of the trace, the quaternionic algebra, and the fact that the trace of any element from $\text{SkewEnd}(TM)$ vanishes\,---\,the following consequence:
\begin{equation} \label{tr(I_iR)}
\langle \pt I_i , R(X,Y) \rangle = s \, \omega_i(X,Y) \mathrlap{.}
\end{equation}

\subsubsection{}

For the remainder of this section we will assume that the manifolds are irreducible, by which we mean that their holonomy groups are assumed to be equal to rather than included in $\smash{ Sp(n) \times_{\mathbb{Z}_2} \! Sp(1) }$ in the quaternionic  K\"ahler case (for \mbox{$n>1$}), and $Sp(n)$ in the hyperk\"ahler one. 

If the quaternionic K\"ahler or hyperk\"ahler manifold $M$ possesses a Killing vector field $X$, then by Theorem \ref{Kostant_th} its covariant derivative \mbox{$\nabla \nhp X$} belongs to the normalizer of the Lie subalgebra \mbox{$\mathfrak{sp}(n) \oplus \mathfrak{sp}(1)$} respectively $\mathfrak{sp}(n)$ in $\mathfrak{so}(4n)$, which one can easily see is in both cases \mbox{$\mathfrak{sp}(n) \oplus \mathfrak{sp}(1)$}. With the notations above, this means that \mbox{$\nabla \nhp X^{\mathfrak{cpl}} = 0$}, and so
\begin{equation}
\nabla \nhp X = \nabla \nhp X^{\mathfrak{sp}(n)} + \nabla \nhp X^{\mathfrak{sp}(1)}
\end{equation}
with the two remaining components both orthogonal and commuting. In four dimensions this decomposition is automatic for any skew-symmetric endomorphism of $TM$, so in this case one need not resort to holonomy arguments. The second component, in particular, is a linear superposition
\begin{equation} \label{sp1-Iis}
\nabla \nhp X^{\mathfrak{sp}(1)} = - \sum_{i=1}^3 \nu_{X i} \hp I_i
\end{equation}
with coefficients $\nu_{X i}$, where the minus sign is conventionally chosen for later convenience. The generators $I_i$ satisfy the orthogonality relations \mbox{$\langle \hp I_i, I_j \rangle = \delta_{ij}$} and \mbox{$\langle \hp I_i, \nabla\nhp X^{\mathfrak{sp}(n)} \rangle = 0$}, and thus we may identify
\begin{equation} \label{nu's}
\nu_{Xi} = - \pt \langle \hp I_i, \nabla \nhp X \rangle \mathrlap{.}
\end{equation}

\subsubsection{}

These coefficients play an important role in the description of Killing symmetries. In the quaternionic K\"ahler case, switching to an $\mathbb{R}^3$-vector notation, we have:

\begin{theorem}[Galicki, Lawson \cite{MR872143,MR960830}] \label{QK-mm-th}
Consider a quaternionic K\"ahler manifold $M$ with associated principal $SO(3)$-bundle $\mathcal{Q}$, and assume that $M$ has a continuous group of isometries $G$, with Lie algebra $\mathfrak{g}$. Then a vector field \mbox{$X \in \mathfrak{g}$}, that is $X$ is Killing, if and only if there exists a local section $\smash{ \vec{\nu}_X }$ of the tensor product bundle \mbox{$\mathfrak{g}^* \otimes \mathcal{Q}$} over $M$ such that\,\footnote{\,The symbol $\times$ denotes the usual $\mathbb{R}^3$-vector cross product. }
\begin{equation} \label{Kill_rot_omi}
\mathcal{L}_X \vec{\omega} = - \pt 2 \hp (\hp \iota_X\vec{\theta} + \vec{\nu}_{X}) \nhp\vprod\nhp \vec{\omega}
\end{equation}
or, equivalently, such that
\begin{equation} \label{QK-mom-map}
d\vec{\nu}_{X} + 2 \pt \vec{\theta} \vprod \vec{\nu}_{X} = s \, \iota_X \vec{\omega} \mathrlap{.}
\end{equation}
Explicitly, this section is given by the formula \mbox{$\vec{\nu}_{X} = - \pt \langle \hp \vec{I}, \nabla \nhp X \rangle$} for any \mbox{$X \in \mathfrak{g}$} and it is 
\begin{itemize}
\setlength\itemsep{0.2em}

\item[$\circ$] nowhere vanishing

\item[$\circ$] the unique solution to the quaternionic-K\"ahler moment map equation \eqref{QK-mom-map} 

\item[$\circ$]  $G$-equivariant

\item[$\circ$] and, for any \mbox{$X,Y \in \mathfrak{g}$}, we have
\begin{equation} \label{nu_[X,Y]-QK}
2\hp \vec{\nu}_{\hp [X,Y]} = 2\hp \vec{\nu}_{X} \nhp\vprod 2\hp \vec{\nu}_{\pt Y} - 2\hp s\, \vec{\omega}(X,Y) \mathrlap{.}
\end{equation}

\end{itemize}
\end{theorem}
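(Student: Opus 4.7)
My plan is to take $\vec\nu_X := -\langle\vec I,\nabla X\rangle$ as an \emph{Ansatz}, derive both stated equations from the Killing condition, and then extract the four enumerated properties. For the reverse implication (existence of $\vec\nu_X$ implies $X$ Killing), I would observe that \eqref{Kill_rot_omi} forces $\mathcal{L}_X$ to act on $\vec\omega$ by an infinitesimal $SO(3)$-rotation; combined with the algebraic reconstruction of $g$ from $\vec\omega$ via \eqref{g-def}, which is rotation-invariant, this forces $\mathcal{L}_X g = 0$.

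To establish \eqref{Kill_rot_omi} under the Killing assumption, I would invoke Kostant's Theorem~\ref{Kostant_th} and irreducibility to place $\nabla X$ in the normalizer of $\mathfrak{sp}(n)\oplus\mathfrak{sp}(1)$ inside $\mathfrak{so}(4n)$, which equals $\mathfrak{sp}(n)\oplus\mathfrak{sp}(1)$ itself; decomposing $\nabla X$ as in \eqref{sp1-Iis} with coefficients \eqref{nu's} then exhibits $\vec\nu_X$ as a section of $\mathfrak{g}^*\otimes\mathcal Q$, since it transforms across local $Q$-frames exactly like $\vec I$. Next I would compute $\mathcal{L}_X I_i = \nabla_X I_i - [\nabla X,I_i]$ from \eqref{ops_on_tens}, substitute \eqref{cd_Ii-QK} in the first term, and use $[\nabla X^{\mathfrak{sp}(n)},I_i]=0$ together with the quaternionic bracket $[I_j,I_i] = -2\varepsilon_{ijk}I_k$ to reduce the commutator; this gives $\mathcal{L}_X\vec I = -2(\iota_X\vec\theta+\vec\nu_X)\times\vec I$, which lifts to \eqref{Kill_rot_omi} via the metric coupling \eqref{g-def} using $\mathcal{L}_X g = 0$.

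For \eqref{QK-mom-map}, I would differentiate the defining formula directly:
\begin{equation*}
d\nu_{Xi}(Y) \;=\; -\langle\nabla_Y I_i,\nabla X\rangle - \langle I_i,\nabla_Y(\nabla X)\rangle.
\end{equation*}
Inserting \eqref{cd_Ii-QK} in the first term gives $-2\varepsilon_{ijk}\theta_j(Y)\nu_{Xk}$; Kostant's formula \eqref{Kill_R-prop} replaces $\nabla_Y(\nabla X)$ with $R(Y,X)$ in the second, which by the curvature trace identity \eqref{tr(I_iR)} produces $s\,\omega_i(X,Y)$, yielding \eqref{QK-mom-map}. Given the existence of \emph{some} $\vec\nu_X$ solving either \eqref{Kill_rot_omi} or \eqref{QK-mom-map}, the equivalence of the two formulations then reduces to uniqueness (proved below), since the explicit $-\langle\vec I,\nabla X\rangle$ satisfies both simultaneously.

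For the four properties: \emph{uniqueness} comes from observing that the difference $\vec\mu$ of two solutions of \eqref{QK-mom-map} satisfies $d\vec\mu + 2\vec\theta\times\vec\mu=0$; differentiating once more and substituting the curvature of the $\mathcal Q$-connection via \eqref{Einst_cond} forces the algebraic identity $s\,\vec\omega\times\vec\mu=0$, so $\vec\mu=0$ by non-degeneracy of $\vec\omega$ and $s\ne 0$. \emph{Nowhere vanishing} follows by applying the same integrability argument to $\vec\nu_X$ itself and invoking the rigidity of Killing vector fields to exclude a zero of $\vec\nu_X$ at a point where $X\ne 0$. $G$-\emph{equivariance} is the naturality of $-\langle\vec I,\nabla\cdot\rangle$ under isometries, $\vec I$ being a section of $\mathcal Q$. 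The \emph{bracket identity} \eqref{nu_[X,Y]-QK} comes from projecting the operator identity \eqref{R-A-rep} onto its $\mathfrak{sp}(1)$-component via $\langle I_i,\cdot\rangle$: the $\mathfrak{sp}(n)$-parts of $A_X$ and $A_Y$ drop out, $\langle I_i,R_{X,Y}\rangle$ produces the $-s\,\vec\omega(X,Y)$ term by \eqref{tr(I_iR)}, and $\langle I_i,[A_X,A_Y]\rangle$ reduces via the quaternionic bracket algebra to the $i$-component of $\vec\nu_X\times\vec\nu_Y$ (with the conventional factors of 2). I expect the main obstacle to be purely bookkeeping — tracking $\varepsilon_{ijk}$-signs and factor-of-2 conventions across the chain $\mathcal{L}_X\to A_X\to [\nabla X,\cdot]\to\nu_{Xi}$, and verifying rigorously that $\nabla X^{\mathfrak{sp}(n)}$ disappears wherever claimed, in particular in the bracket projection.
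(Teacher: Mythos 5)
Your proposal reproduces the paper's argument for three of the four implications making up the ``if and only if'': Killing $\Rightarrow$ \eqref{Kill_rot_omi} via Kostant's theorem and the $\mathfrak{sp}(n)\oplus\mathfrak{sp}(1)$ decomposition of $\nabla X$; Killing $\Rightarrow$ \eqref{QK-mom-map} by differentiating $\nu_{Xi}=-\langle I_i,\nabla X\rangle$ and using \eqref{cd_Ii-QK}, \eqref{Kill_R-prop}, \eqref{tr(I_iR)}; and \eqref{Kill_rot_omi} $\Rightarrow$ Killing via the $SO(3)$-invariant reconstruction of $g$ from the triplet $\vec\omega$. The genuine gap is the fourth implication: that the mere existence of \emph{some} $\vec\nu_X$ solving \eqref{QK-mom-map}, for an a priori arbitrary vector field $X$, already forces $X$ to be Killing. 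Your sentence ``the equivalence of the two formulations then reduces to uniqueness, since the explicit $-\langle\vec I,\nabla X\rangle$ satisfies both simultaneously'' is circular here: the explicit formula is only known to solve either equation \emph{after} $X$ is known to be Killing, and uniqueness of solutions of \eqref{QK-mom-map} tells you nothing about whether a given $X$ admitting one is an isometry. As written, the ``or, equivalently'' in the statement is therefore not proved in the direction \eqref{QK-mom-map} $\Rightarrow$ \eqref{Kill_rot_omi}.

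The fix is essentially the computation you already carry out for uniqueness, applied to the inhomogeneous equation rather than to the difference of two solutions: act with $d$ on \eqref{QK-mom-map}, use $d^2\vec\nu_X=0$ together with the identity $\vec\theta\times(\vec\theta\times\vec\nu_X)=\tfrac12(\vec\theta\times\vec\theta)\times\vec\nu_X$ to produce the curvature combination $d\vec\theta+\vec\theta\times\vec\theta$, substitute the Einstein condition \eqref{Einst_cond} to convert it into $s\,\vec\omega$, and then combine with Cartan's formula and $d\vec\omega=-2\,\vec\theta\times\vec\omega$ to obtain exactly \eqref{Kill_rot_omi}; your first-paragraph argument then yields the Killing property. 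This is precisely the route the paper takes. The rest of your sketch is sound and matches the paper's proof; indeed your explicit uniqueness argument is a welcome supplement, since the paper defers that point to \cite{MR960830}. (Your treatment of ``nowhere vanishing'' is vaguer than the paper's, which deduces it directly from \eqref{QK-mom-map} and the non-degeneracy of the $\omega_i$, but that is a minor issue by comparison.)
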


\begin{proof}

Suppose $X$ is a Killing vector field on $M$. Then by the rules laid out in the list \eqref{ops_on_tens} we have
\begin{equation} \label{A_XI_i}
A_X I_i =  [\hp I_i, \nabla \nhp X \hp] = [\hp I_i, \nabla \nhp X^{\mathfrak{sp}(1)}]  = - \pt 2 \pt \varepsilon_{ijk} \pt \nu_{Xj} I_k  \mathrlap{.}
\end{equation}
The $\mathfrak{sp}(n)$ component of \mbox{$\nabla \nhp X$} drops out because it commutes with the $\mathfrak{sp}(1)$ generators, and for the remaining $\mathfrak{sp}(1)$ component  we use the representation \eqref{sp1-Iis}. Converting then the almost complex structures into 2-forms by means of the metric, which is preserved by the action of $A_X$, gives us promptly the rotation property \eqref{Kill_rot_omi}. 

Conversely, let us assume that we have a vector field $X$ whose Lie action rotates the quaternionic K\"ahler 2-forms, that is \mbox{$\mathcal{L}_X \omega_i = - \pt 2 \pt \varepsilon_{ijk} \hp r_j \pt \omega_k$} for some locally defined triplet of functions $\smash{ r_j }$. Using then any one of the expressions \eqref{g1g2g3} of the metric in terms of the quaternionic K\"ahler 2-forms 
one can easily check that this implies that $X$ is Killing, regardless of the particular form of the functions $\smash{ r_j }$. 

Suppose now again that $X$ is Killing. If we act with $\nabla_{\nhp Y}$ for some arbitrary \mbox{$Y \in TM$} on the expression \eqref{nu's} for $\smash{ \nu_{Xi} }$ we obtain, by applying the Leibniz rule and then the properties \eqref{cd_Ii-QK}, \eqref{Kill_R-prop} and \eqref{tr(I_iR)}, that
\mbox{$\nabla_{\nhp Y} \nu_{Xi} +  2 \pt \varepsilon_{ijk} \pt \theta_j(Y) \hp \nu_{Xk} = s \, \omega_i(X,Y)$}. The quaternionic K\"ahler moment map equation \eqref{QK-mom-map} is simply a rewriting of this equation in the language of differential forms. 

Conversely, let us assume that there exists a vector field $X$ satisfying the quaternionic K\"ahler moment map equation for some locally defined functions $\smash{ \nu_{Xi} }$. Then by acting on this equation with an exterior differential and making use of the Einstein property \eqref{Einst_cond} one can show that the rotation property \eqref{Kill_rot_omi} is implied. By the argument above, this implies further that $X$ must be Killing. 

The non-vanishing of the section $\smash{ \nu_{Xi} }$ follows directly from the quaternionic K\"ahler moment map equation. Should it vanish at some point, the equation would imply that $X$ would be a null vector for the 2-forms $\omega_i$ at that point, which would contradict their non-degeneracy. 

For an argument that the solution to the quaternionic K\"ahler moment map equation is unique we refer the reader to reference \cite{MR960830}. 

\begin{remark}
Note that, unlike the rest of the arguments in the proof, each of the arguments in the last three paragraphs fails in the limit when \mbox{$s = 0$} and thus cannot be extended to the hyperk\"ahler case. 
\end{remark}

Assume now that we have two Killing vector fields \mbox{$X, Y \in \mathfrak{g}$}. Using the representation \eqref{R-A-rep} for the curvature operator, and then the formulas \eqref{A_XI_i} and \eqref{[Ii,R]-eq}, we can write
\begin{equation}
A_{\smash{[X,Y]}}I_i = [A_X,A_Y] I_i - [R(X,Y),I_i \hp] = - \pt 2 \pt \varepsilon_{ijk} \pt \nu_{\smash{[X,Y]j}} \hp I_k 
\end{equation}
with $\smash{ \nu_{\smash{[X,Y]j}} }$ defined as in the theorem. 

Finally, since $\mathcal{L}_Y$ acts on scalars as $\iota_Yd$, from the quaternionic K\"ahler moment map equation it follows that
\mbox{$\mathcal{L}_Y \nu_{Xi} = \nu_{\hp [X,Y]i} - 2\pt \varepsilon_{ijk} \pt (\hp \iota_Y\theta_j + \nu_{\pt Yj}) \pt \nu_{Xk}$},
showing that the sections $\nu_{Xi}$ transform indeed equivariantly under the infinitesimal action of $G$.
\end{proof}

\subsubsection{}

In the hyperk\"ahler limit the coefficients $\smash{ \vec{\nu}_X }$ lose the moment map interpretation and become constant. 

\begin{theorem}
Let $M$ be a hyperk\"ahler manifold having a continuous group of isometries $G$, with Lie algebra $\mathfrak{g}$. Then a vector field \mbox{$X \in \mathfrak{g}$}, that is $X$ is Killing, if and only if there exists a constant map $\smash{ \nu : M \longrightarrow \mathfrak{g}^* \otimes \mathbb{R}^3 }$ such that
\begin{equation} \label{Kill_rot_omi-HK}
\mathcal{L}_X \vec{\omega} = - \pt 2 \pt \vec{\nu}_{X} \nhp\vprod \vec{\omega} \mathrlap{.}
\end{equation}
Explicitly, the map is given by the formula  \mbox{$\vec{\nu}_{X} = - \pt \langle \hp \vec{I}, \nabla \nhp X \rangle$} and, for any \mbox{$X, Y \in \mathfrak{g}$}, we have
\begin{equation} \label{nu_[X,Y]-HK}
2\hp \vec{\nu}_{\hp [X,Y]} = 2\hp \vec{\nu}_{X} \nhp\vprod 2\hp \vec{\nu}_{\pt Y} \mathrlap{.}
\end{equation}
Moreover, if $M$ is compact, $\nu$ vanishes entirely.
\end{theorem}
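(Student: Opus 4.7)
My plan is to prove the theorem by following the template of Theorem \ref{QK-mm-th} and exploiting the two simplifications available in the hyperk\"ahler case: the almost complex structures $I_i$ are now globally defined and satisfy $\nabla I_i = 0$, and the reduced scalar curvature $s$ vanishes.

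For the forward implication I will proceed exactly as in the quaternionic K\"ahler proof: assuming $X$ is Killing, Theorem~\ref{Kostant_th} places $\nabla\npt X$ in the normalizer of $\mathfrak{sp}(n)$ in $\mathfrak{so}(4n)$, which is $\mathfrak{sp}(n) \oplus \mathfrak{sp}(1)$, so that the decomposition \eqref{sp1-Iis} applies. Using that the $\mathfrak{sp}(n)$ component commutes with every $I_j$, the calculation \eqref{A_XI_i} goes through verbatim to give $A_X I_i = -\pt 2\pt\varepsilon_{ijk}\pt\nu_{Xj} I_k$, and passing to 2-forms by way of the metric (which is preserved by $A_X$ since $X$ is Killing) yields \eqref{Kill_rot_omi-HK}. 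The converse direction is likewise a direct carry-over from the quaternionic K\"ahler case: any Lie rotation $\mathcal{L}_X\pt\omega_i = -\pt 2\pt\varepsilon_{ijk}\pt r_j\pt\omega_k$, with arbitrary scalar coefficients $r_j$, preserves the metric via any one of the three expressions \eqref{g1g2g3} and therefore exhibits $X$ as Killing.

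The one step that requires genuinely new input is the constancy of $\vec{\nu}_X$. Here I plan to differentiate $\nu_{Xi} = -\pt\langle\hp I_i,\nabla\npt X\hp\rangle$ along an arbitrary $Y \in TM$,
\begin{equation*}
\nabla_{\nhp Y}\pt\nu_{Xi} = -\pt\langle\hp\nabla_{\nhp Y} I_i,\nabla\npt X\hp\rangle - \langle\hp I_i,\nabla_{\nhp Y}(\nabla\npt X)\hp\rangle,
\end{equation*}
kill the first term by \eqref{cd_Ii-HK}, and kill the second by rewriting $\nabla_{\nhp Y}(\nabla\npt X) = R(Y,\nhp X)$ via Kostant's formula \eqref{Kill_R-prop} and then invoking the trace identity \eqref{tr(I_iR)} at $s=0$. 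This forces $\nabla_{\nhp Y}\nu_{Xi} = 0$ for every $Y$, so $\vec{\nu}_X$ is locally\,---\,and by connectedness of $M$, globally\,---\,constant.

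The bracket formula \eqref{nu_[X,Y]-HK} will follow from the operator identity \eqref{R-A-rep}: since on a hyperk\"ahler manifold the $s = 0$ specialization of \eqref{[Ii,R]-eq} gives $[\hp I_i,R(X,Y)\hp] = 0$, the identity collapses to $A_{\smash{[X,Y]}}\pt I_i = [A_X,A_Y]\pt I_i$, and I can expand both sides through iterated application of \eqref{A_XI_i}\,---\,crucially using the already-established constancy of $\nu_{Yj}$ so that $A_X\nu_{Yj} = \mathcal{L}_X\nu_{Yj} = 0$\,---\,and match the coefficients of the $I_m$ via the standard $\varepsilon$-contraction identity. The compact case is then immediate from the second half of Theorem~\ref{Kostant_th}, which places $\nabla\npt X$ directly into $\mathfrak{h} = \mathfrak{sp}(n)$ and therefore forces its $\mathfrak{sp}(1)$ projection $-\sum_i\nu_{Xi}\pt I_i$ to vanish identically. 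The only real (and very mild) obstacle in this proof is bookkeeping: the moment-map, uniqueness, and non-vanishing arguments of Theorem~\ref{QK-mm-th} all fail at $s = 0$ and must be replaced by the constancy argument above.
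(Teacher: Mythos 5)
Your proposal is correct and follows essentially the same route as the paper, which simply observes that the proof of Theorem~\ref{QK-mm-th} carries over in the limit $s=0$, $\vec{\theta}=0$: the moment-map derivation degenerates to $d\vec{\nu}_X=0$ (your constancy argument via Kostant's formula and the vanishing of $\langle I_i,R(Y,X)\rangle$), the rotation property and its converse go through verbatim, the bracket identity follows from $[I_i,R(X,Y)]=0$, and compactness is handled by the second half of Theorem~\ref{Kostant_th}. The only cosmetic remark is that $A_X\nu_{Yj}=0$ holds automatically because $A_X$ annihilates all functions, so constancy of $\vec{\nu}_Y$ is not actually needed at that step.
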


\begin{proof}

The proof in the quaternionic K\"ahler case carries over in the limit when $s$ and $\vec{\theta}$ vanish, apart from the three instances identified in the intermediary Remark. In particular, equation \eqref{QK-mom-map} now reads simply \mbox{$d \vec{\nu}_{X} = 0$}, implying that the functions \mbox{$ \vec{\nu}_{X}$} are constant in hyperk\"ahler setting (note also that rather than being local sections, they are now globally defined). This equation no longer has the form of a moment map condition, nor does it imply anymore the rotation property. As for the remaining equations, \eqref{Kill_rot_omi} and \eqref{nu_[X,Y]-QK}, they take in this limit, with less dramatic consequences, the forms \eqref{Kill_rot_omi-HK} and \eqref{nu_[X,Y]-HK}, respectively. Finally, the last statement of Theorem \ref{Kostant_th} implies that, when $M$ is compact, \mbox{$\nabla \nhp X^{\mathfrak{sp}(1)} \npt = 0$} for any \mbox{$X \in \mathfrak{g}$}.
\end{proof}

Killing vector fields $X$ for which $\smash{ \vec{\nu}_{X} }$ vanishes are known as \textit{tri-Hamiltonian}  (or \textit{tri-ho\-lo\-mor\-phic}), and the ones for which this is not the case as \textit{rotational} (or \textit{permuting}). (Note that the two notions do not depend on the particular frame chosen in the space of hyperk\"ahler symplectic 2-forms.) So the theorem states that Killing vector fields on hyperk\"ahler manifolds can only be either of tri-Hamiltonian or of rotational type. This generalizes a result proved by other means in four dimensions by Boyer and Finley \cite{MR660020}. In particular, when the manifold is compact, Killing vector fields, should they exist, are necessarily tri-Hamiltonian.

\subsection{Homothetic Euler vector fields} \label{ssec:HKCs}

We end this section with an examination of a different type of symmetry, conformal in nature rather than Killing, which plays a central role in Swann's celebrated quaternionic bundle construction, reviewed in the next section. 

Consider a Riemannian or pseudo-Riemannian manifold $M$ with Levi-Civita connection $\nabla$. We call a vector field $X$ on $M$ \textit{homothetic Euler} if
\begin{equation}
\nabla_{Y}X = Y
\end{equation}
for any vector field \mbox{$Y \in TM$}. In the notation of subsection~\ref{ssec:Killing} this is equivalent to requiring that \mbox{$\nabla \nhp X$} be equal to the identity endomorphism of $TM$. The terminology is justified by the fact that a manifold $M$ possessing such a vector field is automatically a metric cone \cite{MR1663805}, in which context the vector field is known as an Euler vector field. 

A vector field is homothetic Euler if and only if it is homothetic and hy\-per\-sur\-face-or\-tho\-go\-nal. In fact, we have the following list of equivalent characterizations: 

\begin{proposition} \label{HKC-criterions}
Let $M$ be a Riemannian or pseudo-Riemannian manifold with metric $g$ and corresponding Levi-Civita connection $\nabla$. Then the following conditions are equivalent:
\begin{itemize}
\setlength\itemsep{0.2em}

\item[a)] $M$ possesses a homothetic Euler vector field $X$.

\item[b)] There exists a function $\kappa$ on $M$ such that $g = \nabla^2 \kappa$. 

\item[c)] There exists a function $\kappa$ on $M$ such that its gradient vector field with respect to the metric, \mbox{$\iota_X g = d\kappa$}, satisfies the homothetic property $\smash{ \mathcal{L}_X g = 2 \hp g }$.

\end{itemize}
If $M$ is assumed in addition to be hyperk\"ahler, then we can add to this list the conditions
\begin{itemize}[resume]
\item[d)] There exists a function $\kappa$ on $M$ which is simultaneously a K\"ahler potential for every hyperk\"ahler K\"ahler 2-form with respect to its corresponding complex structure, \textit{i.e.} 
\begin{equation}
\omega_i = \frac{1}{2} d(d\kappa I_i) \mathrlap{.}
\end{equation}

\item[e)] There exist four vector fields $X, X_1, X_2, X_3 \in TM$ and a function $\kappa$ on $M$ such that
\begin{equation} \label{4vec_HKC}
\iota_{X_i} \omega_j = \varepsilon_{ijk} \hp \iota_X \omega_k - \pt \delta_{ij} \hp d\kappa
\qquad \text{and} \qquad
\mathcal{L}_X \omega_k = 2 \pt \omega_k \mathrlap{.}
\end{equation}
\end{itemize} %
\end{proposition}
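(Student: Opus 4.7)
The plan is to establish the general equivalences $(a)\Leftrightarrow(b)\Leftrightarrow(c)$ first, then, in the hyperk\"ahler setting, to close the cycle $(a)\Rightarrow(d)\Rightarrow(a)$ and $(a)\Leftrightarrow(e)$.

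The three general conditions all flow from a single decomposition of $\nabla\nhp X$, viewed as a bilinear form on $TM$ via $g$, into its symmetric and skew parts. A direct expansion via Koszul's formula shows that the symmetric part coincides with $\tfrac{1}{2}\mathcal{L}_X g$ and the skew part with $\tfrac{1}{2}d(\iota_X g)$. Hence $\nabla\nhp X = \text{id}$ is equivalent to the pair $\mathcal{L}_X g = 2g$ and $d(\iota_X g) = 0$; Poincar\'e's lemma converts the latter into the local existence of $\kappa$ with $\iota_X g = d\kappa$, which gives $(a) \Leftrightarrow (c)$. The same computation also yields $\nabla^2 \kappa(Y, Z) = Y(g(X, Z)) - g(X, \nabla_Y Z) = g(\nabla_Y X, Z)$, which equals $g(Y, Z)$ if and only if $\nabla_Y X = Y$, establishing $(a) \Leftrightarrow (b)$.

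Turning to the hyperk\"ahler setting, the pivotal identification is $d\kappa\hp I_i = \iota_X \omega_i$, which holds because $\omega_i(X, Y) = g(X, I_i Y) = d\kappa(I_i Y)$. Combined with $d\omega_i = 0$ and Cartan's formula, this reduces $(d)$ to the single condition $\mathcal{L}_X \omega_i = 2\omega_i$. For $(a) \Rightarrow (d)$ I would invoke the operator $A_X = \mathcal{L}_X - \nabla_X$ from subsection~\ref{ssec:Killing}: the third line of \eqref{ops_on_tens} gives $A_X I_i = -[\nabla\nhp X, I_i]$, which vanishes because $\nabla\nhp X = \text{id}$; together with the hyperk\"ahler identity $\nabla_X I_i = 0$ this yields $\mathcal{L}_X I_i = 0$, and then $\mathcal{L}_X \omega_i = (\mathcal{L}_X g)(\cdot, I_i \cdot) + g(\cdot, (\mathcal{L}_X I_i)\cdot) = 2 \omega_i$. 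For $(d) \Rightarrow (a)$ I would take $X$ to be the $g$-gradient of $\kappa$, so that $(d)$ becomes $\mathcal{L}_X \omega_i = 2\omega_i$; then differentiating the algebraic relation $I_i = \omega_k^{\smash{-1}}\omega_j$ (cyclic indices) by Leibniz, with the help of the standard identity $\mathcal{L}_X \omega_k^{\smash{-1}} = -\omega_k^{\smash{-1}}(\mathcal{L}_X\omega_k)\hp\omega_k^{\smash{-1}}$, gives $\mathcal{L}_X I_i = 0$, and a further differentiation of the bilinear relation $g = -\omega_i\hp I_i$ recovers $\mathcal{L}_X g = 2g$, which is $(c)$.

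For the equivalence with $(e)$, in the forward direction I would set $X_i = -I_i X$ and verify the two stated families of relations by direct computation using only $\iota_X g = d\kappa$ and the quaternion algebra $I_i I_j = \varepsilon_{ijk} I_k - \delta_{ij}$: the diagonal case produces the $-\delta_{ij}d\kappa$ term and the off-diagonal one produces $\varepsilon_{ijk}\iota_X\omega_k$, while $\mathcal{L}_X \omega_k = 2\omega_k$ has already been shown. Conversely, specializing $(e)$ to $i = j$ forces $I_i X_i$ to coincide with the $g$-gradient of $\kappa$ for every $i$; calling this common vector $X$ one has $X_i = -I_i X$, and the remaining content of $(e)$ reduces to $(d)$ via the same calculation run backwards. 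I expect the main friction to lie in this last step, where several sign conventions tied to the skew-symmetric action of the $I_i$ on $g$ must be tracked with care; elsewhere the proof is a string of essentially one-line identities.
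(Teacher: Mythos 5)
Your argument is correct in substance and covers all five equivalences, but it is organized differently from the paper's proof, and one step is incomplete as written. For $(a)\Leftrightarrow(b)\Leftrightarrow(c)$ you split the bilinear form $g(\nabla_Y X,Z)$ into its symmetric part $\tfrac{1}{2}\mathcal{L}_Xg$ and skew part $\tfrac{1}{2}d(\iota_Xg)$ and invoke Poincar\'e's lemma; the paper instead exhibits the potential explicitly as $\kappa=\tfrac{1}{2}g(X,X)$, which buys a global (and canonical) $\kappa$ where your route only gives local existence, and sidesteps the closedness discussion entirely. In the hyperk\"ahler part the paper runs the single cycle $(b)\Rightarrow(d)\Rightarrow(e)\Rightarrow(c)$, deriving $(d)$ from the Hessian identity $\omega_i(Y,Z)=\nabla^2_{Y,I_iZ}\kappa$ and covariant constancy of $I_i$, whereas you prove $(a)\Rightarrow(d)$ via $A_XI_i=-[\nabla X,I_i]=0$ and close $(d)\Rightarrow(a)$ and $(a)\Leftrightarrow(e)$ separately by Leibniz differentiation of $I_i=\omega_k^{-1}\omega_j$ and $g=-\omega_iI_i$. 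Both work; your observation that $(d)$ is equivalent to the single condition $\mathcal{L}_X\omega_i=2\omega_i$ once $d\kappa\hp I_i=\iota_X\omega_i$ is in hand is a genuinely clean way to package the hyperk\"ahler half.

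The one real gap is in $(e)\Rightarrow{}$: the diagonal components $\iota_{X_i}\omega_i=-d\kappa$ only tell you that $I_iX_i$ equals the gradient of $\kappa$ for each $i$; they say nothing about the vector field $X$ that already appears in the statement of $(e)$, and it is that $X$ which must satisfy $\mathcal{L}_X\omega_k=2\omega_k$ and $\iota_Xg=d\kappa$. Simply ``calling the common vector $X$'' conflates the two. You need the off-diagonal components to pin $X$ down: substituting $X_i=-I_i(\operatorname{grad}\kappa)$ into $\iota_{X_i}\omega_j=\varepsilon_{ijk}\iota_X\omega_k$ for $i\neq j$ gives $\varepsilon_{ijk}\hp\iota_{\operatorname{grad}\kappa}\omega_k=\varepsilon_{ijk}\hp\iota_X\omega_k$, whence $X=\operatorname{grad}\kappa$ by non-degeneracy of the $\omega_k$. (The paper avoids this by extracting $X_1=-I_1X$ directly from the off-diagonal equation $\iota_{X_1}\omega_2=\iota_X\omega_3$ and only then reading off $\iota_Xg=d\kappa$ from the diagonal.) With that one line added, your proof goes through.
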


\begin{remark}
A hyperk\"ahler manifold $M$ with this structure is known as a \textit{hyperk\"ahler cone}. Due to the property $(d)$ the function $\kappa$ is called in this context a \textit{hyperk\"ahler potential}. Homothetic Euler vector fields were first introduced in a physical setting in relation to theories of \mbox{$N=2$} hypermultiplets in four spacetime dimensions invariant under superconformal symmetries by de Wit, Kleijn and Vandoren in \cite{MR1744865} (see also \cite{MR1825214}). The characterizations $(b)$ and $(d)$ precede this definition and are due to Swann \cite{MR1096180}. Through the property $(e)$, involving exterior algebra conditions on the hyperk\"ahler symplectic \mbox{2-forms}, we add to this collection of characterizations one which is closest in spirit to the approach that we have developed in these notes. 
\end{remark}

\begin{proof}

We begin with the case when $M$ is simply Riemannian or pseudo-Riemannian, with metric $g$ and Levi-Civita connection $\nabla$. Let us recall, first, that for any \mbox{$X,Y,Z \in TM$} we have the identity $\smash{ (\mathcal{L}_X g)(Y,Z) = g(\nabla_YX,Z) + g(\nabla_ZX,Y) }$. And, second, that for any function $\kappa$, its Hessian tensor is given by \mbox{$\nabla^2_{Y,Z} \kappa = g(\nabla_YX,Z)$} for any \mbox{$Y, Z \in TM$}, where $X$ represents now the gradient vector field of $\kappa$ with respect to the metric; this formula holds in fact for any metric-preserving affine connection. 

\mbox{$ a) \Rightarrow b), c)$} Assuming the condition $(a)$ holds, define the function
\begin{equation} \label{kappa_gee}
\kappa = \frac{1}{2} \hp g(X,X) \mathfrak{.}
\end{equation}
Then, for any vector field \mbox{$Y \in TM$} we have $\smash{ \nabla_Y\kappa = g(X,\nabla_YX) = g(X,Y) }$, which is to say, $X$ is the gradient of $\kappa$ with respect to the metric. The first and second identities above imply then immediately the conditions $(c)$ and $(b)$, respectively. 

\mbox{$ b) \Rightarrow a)$} Let in this case $X$ be, by definition, the gradient vector field of the function $\kappa$ with respect to the metric. The implication follows then promptly from the Hessian formula above.

\mbox{$ c) \Rightarrow a)$} The homothetic condition $\smash{ \mathcal{L}_X g = 2 \hp g }$ implies on one hand, by way of the first Levi-Civita identity above, that \mbox{$g(\nabla_YX,Z) + g(\nabla_ZX,Y) = 2 \hp g(Y,Z)$} for any $Y,Z \in TM$. On another hand, from the symmetry of the Hessian tensor it follows, by way of the second identity, that \mbox{$g(\nabla_YX,Z) = g(\nabla_ZX,Y)$}. One gets then that \mbox{$g(\nabla_YX,Z) = g(Y,Z)$} for any $Y,Z \in TM$, which then by the fact that the metric $g$ is non-degenerate implies the condition $(a)$. 

Let us assume  now that the manifold $M$ is, moreover, hyperk\"ahler. 

\mbox{$b) \Rightarrow d)$}  We have in this case \mbox{$\omega_i(Y,Z) = g(Y,I_iZ) = \nabla^2_{Y,I_iZ} \kappa = \nabla_Y \nabla_Z \kappa - \nabla_{\nabla_Y(I_iZ)} \kappa$}, for any \mbox{$Y,Z \in TM$}. Since the hyperk\"ahler complex structures are preserved individually by the Levi-Civita connection we can replace the last term (ignoring the sign) with $\smash{  \nabla_{I_i\nabla_YZ} \kappa }$. In differential form language, the result can be written rather more nicely in the form $\smash{ \omega_i = \frac{1}{2} d_{\nabla} (d\kappa I_i) }$. As the Levi-Civita connection is torsion-free, which translates into an index symmetry for the Christoffel symbols, we can eventually replace in this expression $d_{\nabla}$ with the usual exterior derivative, $d$. 

\mbox{$d) \Rightarrow e)$} Given the potential function $\kappa$ on $M$, define the vector fields \mbox{$X, X_1,X_2,X_3$} as the gradients of $\kappa$ with respect to the metric respectively the three hyperk\"ahler symplectic forms, as follows:
\begin{equation}
\iota_{X_1}\omega_1 = \iota_{X_2}\omega_2 = \iota_{X_3}\omega_3 = - \hp \iota_Xg = - \hp d\kappa \mathrlap{.}
\end{equation}
By using the quaternionic properties of the hyperk\"ahler complex structures we get then that $\smash{ \iota_{X_1}\omega_2 = - \hp \iota_{X_1}\omega_1 I_3 = \iota_Xg I_3 = \iota_X\omega_3 }$. Cyclically permuting the indices in this last argument yields a set of formulas which, together with the definitions of the four vector fields, can be assembled in the form of the first condition \eqref{4vec_HKC}. To obtain the second condition \eqref{4vec_HKC}, on the other hand, note that we have $\smash{ \iota_X \omega_i = \iota_Xg I_i =  d\kappa \hp I_i }$. Then $\mathcal{L}_X \omega_i = d\hp \iota_X \omega_i = d(d\kappa \hp I_i) = 2 \pt \omega_i $.

\mbox{$e) \Rightarrow c)$} Consider, say, the following off-diagonal component of the first condition \eqref{4vec_HKC}: $\smash{ \iota_{X_1} \omega_2 = \iota_X \omega_3 }$. From this it follows that $\smash{ X_1 = \iota_X \omega_3 \pt \omega_{\mathrlap{2}}{}^{\raisebox{0.3pt}{$\scriptstyle -1$}} = \omega_{\mathrlap{2}}{}^{\raisebox{0.3pt}{$\scriptstyle -1$}} \omega_3 X = - I_1X }$, where in the last step we made use of the second expression for $I_1$ in \eqref{I_i-two_exprs}. Cyclically permuting the indices in this argument gives us
\begin{equation}
X_i = - \hp I_iX \mathrlap{.}
\end{equation}
Substituting this back into the condition we get then from the diagonal components that \mbox{$\iota_X g = d\kappa$}. On another hand, the second condition \eqref{4vec_HKC} implies immediately by way of any one of the expressions \eqref{g1g2g3} for the metric that $\smash{ \mathcal{L}_X g = 2 \hp g }$. This concludes the proof of the chain of equivalences.
\end{proof}

Hyperk\"ahler cones are naturally foliated by four-dimensional leaves. More precisely, we have:

\begin{proposition}
Let $M$ be a hyperk\"ahler cone. Then the four vector fields $X,\pt X_1,X_2,X_3$ at point $(e)$ of Proposition~\ref{HKC-criterions} generate the \mbox{$\mathbb{R} \oplus \mathfrak{sp}(1)$} algebra
\begin{equation} \label{so(3)-alg}
[X_i,X_j] = 2 \pt \varepsilon_{ijk} X_k
\qquad\qquad
[X_i,X \hp] = 0
\end{equation}
with $X$ homothetic Euler and \mbox{$X_1, X_2, X_3$} Killing vector fields of rotating type.
\end{proposition}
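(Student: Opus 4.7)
The plan is to leverage the key identity $X_i = -I_iX$ established in the previous proposition, together with the homothetic Euler property $\nabla_YX = Y$ and the covariant constancy of the hyperk\"ahler complex structures $\nabla I_i = 0$. These immediately give the fundamental formula
\[
\nabla_Y X_i \;=\; \nabla_Y(-I_iX) \;=\; -I_i\nabla_YX \;=\; -I_iY,
\]
valid for every $Y \in TM$. Essentially all the statements of the proposition fall out of this formula.

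First I would dispatch the bracket identities using the fact that the Levi-Civita connection is torsion-free, so $[U,V] = \nabla_UV - \nabla_VU$. For $[X_i,X]$ we get $\nabla_{X_i}X - \nabla_X X_i = X_i - (-I_iX) = X_i + I_iX = 0$ since $I_iX = -X_i$. For $[X_i,X_j]$ the same substitution yields $-I_jX_i + I_iX_j = [I_j,I_i]X$, and the imaginary quaternionic algebra $[I_i,I_j] = 2\varepsilon_{ijk}I_k$ turns this into $-2\varepsilon_{ijk}I_kX = 2\varepsilon_{ijk}X_k$, as claimed.

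Next, to show that each $X_i$ is Killing, I would compute
\[
(\mathcal{L}_{X_i}g)(Y,Z) \;=\; g(\nabla_YX_i,Z) + g(\nabla_ZX_i,Y) \;=\; -g(I_iY,Z) - g(I_iZ,Y).
\]
Hermiticity of $g$ with respect to $I_i$ gives $g(I_iY,Z) = -g(Y,I_iZ) = -\omega_i(Y,Z)$ and $g(I_iZ,Y) = -\omega_i(Z,Y) = \omega_i(Y,Z)$, so the two terms cancel and $\mathcal{L}_{X_i}g = 0$. (Since $X$ itself satisfies $\mathcal{L}_Xg = 2g$, it is homothetic rather than Killing, as stated.)

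Finally, for the rotating character of the $X_i$, I would invoke the previous theorem's formula $\vec{\nu}_X = -\langle\vec{I},\nabla X\rangle$. Since $\nabla X_i$, viewed as an endomorphism of $TM$, is precisely $-I_i$, the inner product \eqref{inpr} combined with the orthogonality $\langle I_j,I_k\rangle = \delta_{jk}$ yields $\nu_{X_i,j} = -\langle I_j,-I_i\rangle = \delta_{ij}$. The triplet $\vec{\nu}_{X_i}$ is therefore the $i$-th coordinate unit vector, in particular nowhere vanishing, so the $X_i$ are rotating Killing vector fields (and one may verify the consistency $\mathcal{L}_{X_i}\omega_j = 2\varepsilon_{ijk}\omega_k$ directly from \eqref{Kill_rot_omi-HK}). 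Because every step is reduced to an algebraic manipulation of $I_i$'s once the formula $\nabla_YX_i = -I_iY$ is in hand, no real obstacle arises; the only point requiring mild care is keeping track of signs in the Hermiticity identity when checking the Killing property.
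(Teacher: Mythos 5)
Your proof is correct, but it takes a genuinely different route from the paper's. The paper stays entirely within its differential-form framework: it obtains the rotation property $\mathcal{L}_{X_i}\omega_j = 2\pt\varepsilon_{ijk}\pt\omega_k$ by applying $d$ to the first identity in \eqref{4vec_HKC} and invoking Cartan's homotopy formula together with the closure of the $\omega_i$ (which already certifies the $X_i$ as rotating Killing fields, by the hyperk\"ahler Killing theorem), and it then extracts the brackets by contracting $\iota_{[X_i,X_j]}\omega_l$ via the Cartan-calculus identity $\iota_{[X,Y]} = \iota_X\mathcal{L}_Y - \iota_Y\mathcal{L}_X + [d,\iota_X\iota_Y]$, using $\omega_l(X_i,X_j) = -2\pt\varepsilon_{ijl}\hp\kappa$ and the non-degeneracy of $\omega_l$. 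You instead work at the level of the Levi-Civita connection: from $X_i = -I_iX$, $\nabla_Y X = Y$ and $\nabla I_i = 0$ you get the single pointwise formula $\nabla_Y X_i = -I_iY$, from which torsion-freeness gives the brackets, the standard Lie-derivative formula gives the Killing property, and the identification $\nabla X_i = -I_i$ combined with \eqref{nu's} gives $\vec{\nu}_{X_i} = \vec{e}_i$ explicitly. Your approach is arguably more elementary and makes the "rotating type" claim maximally concrete (the paper only exhibits the rotation property \eqref{HKC_rot} without computing the $\nu$'s), but it leans on the covariant constancy of the $I_i$ and the relation $X_i = -I_iX$, i.e.\ on the Riemannian machinery the paper is deliberately keeping in the background; the paper's route shows the algebra is forced by the exterior-algebraic data of \eqref{4vec_HKC} alone. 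All your sign conventions check out against the paper's ($g(I_iY,Z) = -\omega_i(Y,Z)$, $[I_j,I_i]X = 2\pt\varepsilon_{ijk}X_k$, $\nu_{X_i j} = \delta_{ij}$), so there is no gap.
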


\begin{proof}

Acting with an exterior derivative on the first condition \eqref{4vec_HKC} and using the closure of the $\omega_i$ to pass to Lie derivatives by way of Cartan's homotopy formula we get that
\begin{equation} \label{HKC_rot}
\mathcal{L}_{X_i} \omega_j = 2 \pt \varepsilon_{ijk} \pt \omega_k  \mathrlap{.}
\end{equation}
This means that each one of the vector fields \mbox{$X_1, X_2, X_3$} generates a rotational action on $M$ and is therefore Killing. We then have
{\allowdisplaybreaks
\begin{align}
\iota_{[X_i,X_j]} \omega_l & = (\iota_{X_i}\mathcal{L}_{X_j} - \iota_{X_j}\mathcal{L}_{X_i} + [d,\iota_{X_i}\iota_{X_j}]) \hp \omega_l \\[2pt]
& = 2 \pt \varepsilon_{jlk} \hp \iota_{X_i} \omega_k - 2 \pt \varepsilon_{ilk} \hp \iota_{X_j} \omega_k - 2\pt \varepsilon_{ijl}\hp d\kappa  \nonumber \\
\intertext{where we used that $\omega_l(X_i,X_j) = g(I_iX,I_lI_jX) = - \hp g(X,I_iI_lI_jX) = \varepsilon_{ilj} \hp g(X,X) = - 2 \pt \varepsilon_{ijl} \hp \kappa$, as well as the closure of $\omega_l$; resorting again to the first condition \eqref{4vec_HKC} and the Jacobi property of the $\varepsilon$-symbol returns in the end}
& = 2 \pt \varepsilon_{ijk} \hp \iota_{X_k} \omega_l \mathrlap{.} \nonumber
\end{align}
}%
Since the $\omega_l$ are non-degenerate, this is equivalent to the first algebraic condition \eqref{so(3)-alg}. The remaining condition \eqref{so(3)-alg} can be derived by a similar argument.
\end{proof}

\section{Swann bundles} \label{sec:Sw_bdls}

In \cite{MR1096180} Swann introduced a powerful tool for the study of quaternionic K\"ahler manifolds by showing that over each such manifold one can construct a quaternionic bundle whose total space carries a hyperk\"ahler cone structure encoding the quaternionic K\"ahler geometry of the base. This allows one to describe the geometry of manifolds which in general have no integrable complex structures, apart perhaps from accidental ones inessential to their quaternionic K\"ahler structure, in terms of the hyperk\"ahler geometry of a space of four real dimensions higher possessing a wealth of integrable complex structures. Before we proceed to review the details of Swann's construction it is useful to recall a few facts about (a $\mathbb{Z}_2$ quotient of) the multiplicative group of non-zero quaternions, $\mathbb{H}^{\times}$, which plays a key role in this story.

\subsection{Two structures on the group $\mathbb{H}^{\times}\npt /\mathbb{Z}_2$}

\subsubsection{}

Consider an embedding of the group $Sp(1)$ of unitary quaternions into $\mathbb{H}^{\times}$. Clearly, every non-zero quaternion can be uniquely represented as a positive \pagebreak real number, its absolute value, times a unit quaternion; this gives a natural isomorphism 
\begin{equation}
\mathbb{H}^{\times} \cong \pt \mathbb{R}^+ \npt \otimes Sp(1) \mathrlap{.}
\end{equation}
Thus, $\mathbb{H}^{\times}$ has the structure of a Lie group, with Lie algebra generated by the standard quaternionic basis \mbox{$1$, $\mathbf{i}$, $\mathbf{j}$, $\mathbf{k}$}. In what follows we will denote the elements of this basis uniformly with \mbox{$u_0$, $u_1$, $u_2$, $u_3$}, respectively\,---\,or, simply, $u_a$.  The indices \mbox{$a,b,\dots \npt$} will be assumed to run from 0 to 3 while the indices \mbox{$i, j, k, \dots$} to run, as until now, from 1 to 3, and the summation convention over repeated indices will continue to be implied. 

The adjoint representation of $\mathbb{H}^{\times}$ is four-dimensional and defined by $\smash{ q^{-1} \hp u_a \pt q = R_{ab}(q) \hp u_b }$, for any \mbox{$q \in \mathbb{H}^{\times}$}. It satisfies the properties
\begin{align}
1. \ \, & R_{ab}(\lambda \hp q) = R_{ab}(q) \text{ \pt for any $\lambda \in \mathbb{R}^{\times}$} \\
2. \ \, & R_{ab}(q^{-1}) = R_{ba}(q) \mathrlap{.} \nonumber
\end{align}
The first one follows immediately from the definition. Taking in it in particular $\smash{ \lambda = 1/|q| }$ and \mbox{$\lambda = -1$}, one can see that the adjoint representation descends to a representation of the rotation group $SO(3)$, whose double cover is $Sp(1)$. This is reducible: one can write it in block-diagonal form as the direct sum of the one-dimensional trivial representation and the three-dimensional irreducible representation $R_{ij}(q)$ of $SO(3)$. On the other hand, by the associativity of quaternionic multiplication and the multiplicative property of the norm, $\smash{ |u_a| = |q^{-1} \hp u_a \hp q| = |R_{ab}(q) \hp u_b| }$, which is to say, adjoint transformations preserve the quaternionic norm. They preserve hence also the associated inner product, and the second relation is the expression of this orthogonality property.

The Lie algebra (\textit{i.e.}\! quaternion)-valued left and right-invariant Cartan--Maurer 1-forms of $\mathbb{H}^{\times}$ are given by
{\allowdisplaybreaks
\begin{alignat}{2}
&\sigma^L = q^{-1} \nhp dq = \sigma_{\mathrlap{a}}{}^L \hp u_a 
&\qquad\qquad&
\sigma^R = q \hp dq^{-1} \npt = \sigma_{\mathrlap{a}}{}^R \hp u_a \\
\intertext{for any element \mbox{$q \in \mathbb{H}^{\times}$}. Thus defined, each of these satisfy the same set of Cartan--Maurer equations, which in components read}
& d\sigma_{\mathrlap{i}}{}^L + \varepsilon_{ijk} \pt \sigma_{\mathrlap{j}}{}^L \!\wedge\npt \sigma_{\mathrlap{k}}{}^L = 0 && d\sigma_{\mathrlap{i}}{}^R + \varepsilon_{ijk} \pt \sigma_{\mathrlap{j}}{}^R \!\wedge\npt \sigma_{\mathrlap{k}}{}^R = 0 \\
& d\sigma^L_0 = 0 && d\sigma^R_0 = 0 \mathrlap{.} \nonumber
\end{alignat}
}%
The left and right-invariant Cartan--Maurer 1-forms are related to each other by an adjoint transformation up to a minus sign: $\smash{ \sigma_{\mathrlap{a}}{}^L = - \hp R_{ab}(q^{-1}) \hp \sigma_{\mathrlap{b}}{}^{\raisebox{0.3pt}{$\scriptstyle R$}}    }$. Note that in particular we have
\begin{equation}
\sigma^L_0 = - \hp \sigma^R_0 = \frac{d|q|^2}{2|q|^2} \mathrlap{.}
\end{equation}

Dual to the Cartan--Maurer 1-forms one has the left and right-invariant vector fields of the Lie group $\mathbb{H}^{\times}$. The duality requirement with respect to the corresponding set of invariant 1-forms determines them completely. They form two commuting copies of the Lie algebra \mbox{$\mathbb{R} \oplus \mathfrak{sp}(1)$}, \textit{i.e.}
\begin{alignat}{2} \label{ells}
[\pt  \ell_{\mathrlap{i}}{}^L, \ell_{\mathrlap{j}}{}^L \pt] & = 2\pt \varepsilon_{ijk} \pt \ell_{\mathrlap{k}}{}^L 
&\qquad\qquad& [\pt \ell_{\mathrlap{i}}{}^R,\ell_{\mathrlap{j}}{}^R \pt] = 2\pt \varepsilon_{ijk} \pt \ell_{\mathrlap{k}}{}^R \\
[\pt \ell_{\mathrlap{i}}{}^L,\ell^L_0 \pt] & = 0 
&& [\pt \ell_{\mathrlap{i}}{}^R,\ell_0^R \pt] = 0 \nonumber
\end{alignat}
which one may think of as the dual Cartan--Maurer equations, and the two sets of vector fields are related by $\smash{ \ell_{\mathrlap{a}}{}^L = - \hp R_{ab}(q^{-1}) \pt \ell_{\mathrlap{b}}{}^{\raisebox{0.3pt}{$\scriptstyle R$}}  }$. In particular, we have 
\begin{equation}
\ell^L_0 = - \hp \ell^R_0 = q_0\frac{\partial}{\partial q_0} + q_1\frac{\partial}{\partial q_1} + q_2\frac{\partial}{\partial q_2} + q_3\frac{\partial}{\partial q_3}
\end{equation}
where \mbox{$q_0$, $q_1$, $q_2$, $q_3$} are the components of the quaternion $q$.

\subsubsection{} \label{ssec:qu_in_pr}

In addition to the Lie group structure, $\mathbb{H}^{\times}$ carries also two \textit{hypercomplex structures}. That is, its tangent bundle is equipped with two natural actions of the algebra of imaginary quaternions, with the three generators defining integrable complex structures. Viewing complex structures in their dual guise as endomorphisms of the cotangent bundle, and in accordance with our convention that complex structures act from the right on 1-forms, these actions are induced by right and left quaternionic multiplication on a quaternionic coframe of $T^*\mathbb{H}^{\times}$ as follows:
\begin{equation}
dq \pt \mathcal{I}_{\hp\mathrlap{i}}{}^L = dq \pt u_i
\qquad\qquad
dq \pt \mathcal{I}_{\hp\mathrlap{i}}{}^R = - \hp u_i \hp dq \mathrlap{.}
\end{equation}
They are respectively left and right-invariant, and the two sets of hypercomplex generators commute with each other. 

Let $\smash{ \langle x, y \rangle = \frac{1}{2} (|x+y|^2 - |x|^2 - |y|^2)  }$ for any \mbox{$x,y \in \mathbb{H}$} be the inner product induced on the space of quaternions by the quaternionic norm. Observe that in the standard quaternionic basis this takes the Euclidean form \mbox{$\langle u_a, u_b \hp \rangle = \delta_{ab}$} and, moreover, that in terms of it the alternating quaternionic products of any two quaternions $x,y \in \mathbb{H}$ admit the representations
\begin{equation}
\bar{x} \hp y = \langle \hp x\hp u_a,y \hp \rangle \hp u_a
\qquad\qquad
x \hp \bar{y} = \langle \hp x, u_a \hp y \hp \rangle \hp u_a
\end{equation}
where the overhead bar indicates the operation of quaternionic conjugation. One can then verify that in the alternative coframes provided by the Cartan--Maurer 1-forms and their respective dual vector frames the generators of the two hypercomplex structures may be expressed as
\begin{equation}
\mathcal{I}_{\hp\mathrlap{i}}{}^L = \langle u_au_i,u_b \hp \rangle \, \ell_{\mathrlap{a}}{}^L \otimes \sigma_{\mathrlap{b}}{}^L
\qquad\qquad
\mathcal{I}_{\hp\mathrlap{i}}{}^R = \langle u_a,u_iu_b \hp \rangle \, \ell_{\mathrlap{a}}{}^R \otimes \sigma_{\mathrlap{b}}{}^R \mathrlap{.}
\end{equation}
By resorting to the orthogonality property of the adjoint representation and the left-right transition formulas one can also recast these in the form 
\begin{equation} \label{hcstrs_H*_2}
\hspace{-10pt} 
\mathcal{I}_{\hp\mathrlap{i}}{}^L = R_{ij}(q^{-1} ) \langle u_au_j,u_b \hp \rangle \, \ell_{\mathrlap{a}}{}^R \otimes \sigma_{\mathrlap{b}}{}^R
\qquad\qquad
\mathcal{I}_{\hp\mathrlap{i}}{}^R = R_{ij}(q) \langle u_a,u_ju_b \hp \rangle \, \ell_{\mathrlap{a}}{}^L \otimes \sigma_{\mathrlap{b}}{}^L \mathrlap{.} 
\end{equation}

\subsubsection{}

Finally, note that both the Lie group structure and the two hypercomplex structures of $\mathbb{H}^{\times}$ considered above descend on the quotient of $\mathbb{H}^{\times}$ with respect to the $\mathbb{Z}_2$-action generated by \mbox{$q \mapsto -q$}. In this case we have 
\begin{equation}
\mathbb{H}^{\times} \npt /\mathbb{Z}_2 \cong \pt \mathbb{R}^+ \npt  \otimes SO(3) \mathrlap{.}
\end{equation}
This is the conformal special orthogonal group in three dimensions.

\subsection{The Swann bundle of a quaternion K\"ahler manifold} 

\subsubsection{}

Let $M$ be a quaternionic K\"ahler manifold, with a choice of local frame \mbox{$\omega_1$, $\omega_2$, $\omega_3$}  for its $SO(3)$-bundle of 2-forms $Q$ and a principal connection with local connection \mbox{1-forms} \mbox{$\theta_1$, $\theta_2$, $\theta_3$} on the associated principal bundle $\mathcal{Q}$. For the considerations which follow it will be convenient to regard these two triplets as the components of two imaginary quaternion-valued differential forms, \mbox{$\omega = \omega_i \hp u_i$} and \mbox{$\theta = \theta_i \hp u_i$}. In this quaternionic formalism then the Einstein condition \eqref{Einst_cond} of Theorem~\ref{QK-criterion-2} can be cast in the form
\begin{equation} \label{Einst_cond-q}
d\theta + \theta \wedge \theta = s \pt\hp \omega
\end{equation}
where the wedge symbol denotes here the natural extension of the operation of exterior product to quaternion-valued differential forms. 

By definition, the \textit{Swann bundle} over the quaternion K\"ahler manifold $M$ is the associated bundle
\begin{equation}
\mathcal{U}(M) = \mathcal{Q} \times_{SO(3)} (\mathbb{H}^{\times}\npt /\mathbb{Z}_2) \longrightarrow M \mathrlap{.}
\end{equation}
On its total space one assembles the following $\Im \mathbb{H}$-valued 2-form:
\begin{equation} \label{Sw-Om-1}
\Omega = s \, \bar{q} \pt \omega \hp q + (\overline{dq + \theta q}) \nhp\wedge\nhp (dq + \theta q) \mathrlap{.}
\end{equation} 

Although this is made up of local quantities, it is globally defined. Indeed, consider an open covering $\mathcal{C}$ of $M$ and two overlapping open sets \mbox{$U,V \in \mathcal{C}$}. On the intersection \mbox{$U \cap V$} the corresponding local frames of $Q$ are related by an $SO(3)$ transition function, that is, $\smash{ \omega_{iV} = R_{ij}(u_{VU}) \pt \omega_{jU} }$, with \mbox{$u_{VU} \in Sp(1)$} being either one of the two opposite-sign unit quaternions which determine the rotation matrix through the double cover map \mbox{$Sp(1) \rightarrow SO(3)$} (recall that $\smash{ R_{ij}(u_{VU}) = R_{ij}(- u_{VU}) }$). In quaternionic notation this transition relation reads \mbox{$ \omega_V = u_{VU} \pt\hp \omega_U \hp u_{VU}{}^{\mathllap{\raisebox{1pt}{$\scriptstyle -1$} \hspace{1.5pt}} } $}. On another hand, the transition relation for the connection 1-form is \mbox{$\theta_V = u_{VU} \pt \theta_U u_{VU}{}^{\mathllap{\raisebox{1pt}{$\scriptstyle -1$} \hspace{1.5pt}} } + u_{VU} \hp du_{VU}{}^{\mathllap{\raisebox{1pt}{$\scriptstyle -1$} \hspace{1.5pt}} } $}. Thus, if the quaternionic fiber coordinates patch up according either to the rule $\smash{ q_V =  u_{VU} q_{\hp U} }$ or  to $\smash{ q_V =  - \hp u_{VU} q_{\hp U} }$, then we have \mbox{$\Omega_V = \Omega_{\hp U}$}, which then by the arbitrariness in the choice of $U$ and $V$ implies that $\Omega$ is globally defined on $\mathcal{U}(M)$. Note also that in either case $\smash{ |\hp q_V| = |q_{\hp U}| }$, so the function $|q|$ is globally defined on $\mathcal{U}(M)$ as well. 

In fact, there are more globally defined quantities one can consider. To see that, let \mbox{$\smash{ \sigma_{a U}{}^{\hspace{-11pt} \raisebox{1pt}{$\scriptstyle L$}} \hspace{4pt} }$ and $\smash{ \sigma_{a U}{}^{\hspace{-11pt} \raisebox{1pt}{$\scriptstyle R$}} \hspace{4pt} }$} for any \mbox{$U \in \mathcal{C}$} be the pull-backs by the local trivialization map of the left and right-invariant Cartan--Maurer 1-forms from $\mathbb{H}^{\times}/\npt \mathbb{Z}_2$ to the fibers above $U$\,---\,and then let $\smash{ \ell_{a U}{}^{\mathllap{\raisebox{1pt}{$\scriptstyle L$} \hspace{5.5pt}} } }$ and $\smash{ \ell_{a U}{}^{\mathllap{\raisebox{1pt}{$\scriptstyle R$} \hspace{5.2pt}} } }$ be their respective dual vector fields. Using the above patching rules for the quaternionic fiber coordinates one can then check that on any intersection $U \cap V$ we have \mbox{$\ell_{a V}{}^{\mathllap{\raisebox{1pt}{$\scriptstyle L$} \hspace{5.5pt}} } = \ell_{a U}{}^{\mathllap{\raisebox{1pt}{$\scriptstyle L$} \hspace{5.5pt}} }$} and \mbox{$\ell_{a V}{}^{\mathllap{\raisebox{1pt}{$\scriptstyle R$} \hspace{5.5pt}} } = R_{ab}(u_{VU})\hp \ell_{b \hp U}{}^{\mathllap{\raisebox{1pt}{$\scriptstyle R$} \hspace{4.7pt}} }$}. Thus, in particular, the left-invariant vertical vector fields are also globally defined, as one would expect to be the case for a bundle associated to a right principal bundle. 

Let \mbox{$\Omega_1$, $\Omega_2$, $\Omega_3$} be the real components of $\Omega$. The rationale behind the above definition is provided by the following result: 

\begin{theorem}[Swann \cite{MR1096180}]
Let $M$ be a quaternionic K\"ahler manifold. The bundle $\mathcal{U}(M)$ endowed with the 2-forms \mbox{$\Omega_1$, $\Omega_2$, $\Omega_3$} constructed as above is a hyperk\"ahler cone with homothetic Euler vector field $\smash{ \ell_0^L }$ and hyperk\"ahler potential \mbox{$|q|^2$}.
\end{theorem}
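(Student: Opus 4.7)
My plan is to verify the hyperk\"ahler criterion of Theorem~\ref{HK-criterion} for the triplet of real components of $\Omega$, then apply part~(e) of Proposition~\ref{HKC-criterions} with $X = \ell_0^L$, $X_i = \ell_i^L$ and $\kappa = |q|^2$ to obtain the cone structure. Throughout, I would keep the quaternionic packaging $\Omega = s\hp\bar q\hp\omega\hp q + \bar\phi\wedge\phi$ with $\phi = dq + \theta q$, and use the quaternionic forms of the QK structural equations: the Einstein condition~\eqref{Einst_cond-q} and the torsion-free identity $d\omega + \theta\wedge\omega - \omega\wedge\theta = 0$, the latter being the direct rewriting of $d\omega_i = -2\hp\varepsilon_{ijk}\hp\theta_j\wedge\omega_k$ in quaternionic notation.

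For closure, the auxiliary derivatives $d\phi = s\hp\omega\hp q - \theta\wedge\phi$ and $d\bar\phi = -\bar\phi\wedge\theta - s\hp\bar q\hp\omega$ follow immediately from $d\theta + \theta\wedge\theta = s\hp\omega$. Expanding $d\Omega$ with $d\bar q = \bar\phi + \bar q\hp\theta$ and $dq = \phi - \theta q$, the cross-terms $\pm\hp s\hp\bar\phi\wedge\omega\hp q$ and $\pm\hp s\hp\bar q\hp\omega\wedge\phi$ cancel between the two summands, leaving precisely $s\hp\bar q\hp(d\omega + \theta\wedge\omega - \omega\wedge\theta)\hp q$, which vanishes by the torsion-free identity. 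So $d\Omega_i = 0$.

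The algebraic condition $I_i^2 = -1$ for $I_i = \Omega_{i+2}^{-1}\Omega_{i+1}$ is where I expect the main obstacle, since the quaternionic conjugation $\omega\mapsto\bar q\hp\omega\hp q$ combined with the vertical piece $\bar\phi\wedge\phi$ must assemble into a genuine hypercomplex algebra. My approach is pointwise. At any point $p_0\in\mathcal{U}(M)$ lying above $p\in M$, use the $SO(3)$-equivariance of the construction (via the $Sp(1)$ transition laws for $\mathcal{U}(M)$) to trivialize the fiber so that $q = 1$ at $p_0$, and pick the local frame $\omega_i$ of $Q$ so that the connection satisfies $\theta|_p = 0$. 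In this gauge $\Omega|_{p_0} = s\hp\omega + d\bar q\wedge dq$ splits as a direct sum on horizontal and vertical directions: the $s\hp\omega$ triplet on $T_pM$ satisfies the algebraic condition by Theorem~\ref{QK-criterion-1}, while $d\bar q\wedge dq$ is the standard flat hyperk\"ahler triplet on the fiber $\mathbb{H}$. The algebraic condition of Theorem~\ref{HK-criterion} is block-diagonal and thus holds on the direct sum, and by tensoriality it holds at every point of $\mathcal{U}(M)$. Non-degeneracy and pointwise linear independence of the $\Omega_i$ are read off similarly.

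For the cone data, since $\omega$ and $\theta$ are pulled back from $M$ and $\mathcal{L}_{\ell_0^L} q = q$, one gets $\mathcal{L}_{\ell_0^L}\phi = \phi$ and $\mathcal{L}_{\ell_0^L}\bar\phi = \bar\phi$, and thus $\mathcal{L}_{\ell_0^L}\Omega = 2\hp\Omega$. Using $\iota_{\ell_0^L}dq = q$ and $\iota_{\ell_i^L}dq = q\hp u_i$ together with their conjugates, one finds
\begin{equation*}
\iota_{\ell_0^L}\Omega = \bar q\hp\phi - \bar\phi\hp q, \qquad d\kappa = d(\bar q\hp q) = \bar\phi\hp q + \bar q\hp\phi, \qquad \iota_{\ell_i^L}\Omega = -\hp u_i\hp\bar q\hp\phi - \bar\phi\hp q\hp u_i \hp .
\end{equation*}
Setting $A := \bar q\hp\phi$, so that $d\kappa = A + \bar A$ and $\iota_{\ell_0^L}\Omega = A - \bar A$, the quaternionic identity $u_iV - V u_i = -\hp 2\hp\varepsilon_{ijk}V_k\hp u_j$ valid for any imaginary $V = V_k u_k$ converts $-\hp u_iA - \bar A u_i$ into $-d\kappa\hp u_i + \varepsilon_{ijk}\hp(\iota_{\ell_0^L}\Omega_k)\hp u_j$. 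Projecting onto imaginary components reproduces exactly the cone identity $\iota_{\ell_i^L}\Omega_j = \varepsilon_{ijk}\hp\iota_{\ell_0^L}\Omega_k - \delta_{ij}\hp d\kappa$ of Proposition~\ref{HKC-criterions}(e). Hence $\ell_0^L$ is the homothetic Euler vector field and $|q|^2$ the hyperk\"ahler potential, completing the argument.
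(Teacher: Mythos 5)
Your proof is correct and follows the same overall strategy as the paper: verify the two conditions of Theorem~\ref{HK-criterion} for the triplet $\Omega_i$, then verify criterion $(e)$ of Proposition~\ref{HKC-criterions} for $\ell_0^L,\ell_i^L$ with $\kappa=|q|^2$; your cone computation is essentially the paper's, just written out in full. The two verification steps are carried out a little differently, and it is worth recording how. For closure, the paper rearranges $\Omega = \bar q\,(s\,\omega - d\theta - \theta\wedge\theta)\,q - d[\hp\bar q\,(\sigma^R-\theta)\,q\hp]$, so that the Einstein condition alone exhibits $\Omega$ as locally exact; you instead differentiate directly using $d\phi = s\,\omega\hp q - \theta\wedge\phi$ and the structure equation $d\omega = -\,\theta\wedge\omega + \omega\wedge\theta$, which also works (the latter equation is anyway a Bianchi consequence of the former). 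For the algebraic condition, the paper uses the global rearrangement $\Omega = \bar q\,[\hp s\,\omega + (\overline{\sigma^R-\theta})\wedge(\sigma^R-\theta)\hp]\,q$, which is block-diagonal along the connection's horizontal/vertical splitting at \emph{every} point, and then invokes the invariance of the algebraic condition under overall rescalings and $SO(3)$ rotations to drop the $\bar q\,(\cdot)\,q$ twist; your pointwise normal-gauge argument is the evaluation of this at a single point and is legitimate for the same invariance reason. One small imprecision: a change of local frame acts on the fiber coordinate by $q\mapsto u\hp q$ with $u\in Sp(1)$, so you can only normalize the phase of $q$ at $p_0$, not its modulus; at a general point you get $\Omega|_{p_0}= s\hp|q_0|^2\omega + d\bar q\wedge dq$ rather than $s\,\omega + d\bar q\wedge dq$. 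This is harmless, since rescaling one block of a block-diagonal triplet preserves the algebraic condition, but you should say so rather than claim $q=1$.
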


\begin{proof}

The fact that the three 2-forms determine a hyperk\"ahler structure on $\mathcal{U}(M)$ can be seen fairly quickly by checking that they satisfy the conditions of Theorem~\ref{QK-criterion-2}. Thus, one one hand, the verification of the differential condition of the theorem is considerably facilitated if one observes that Swann's formula \eqref{Sw-Om-1} can be equivalently rearranged in the form
\begin{equation}
\Omega = \bar{q} \hp ( s \, \omega - d\theta - \theta \wedge \theta \hp ) q - d \hp [\hp \bar{q} \hp (\sigma^R \!- \theta \hp) q \pt ] \mathrlap{.}
\end{equation}
By virtue of the Einstein condition \eqref{Einst_cond-q} $\Omega$ is then clearly locally exact and hence closed. On the other hand, for the algebraic condition, note that the same formula \eqref{Sw-Om-1} can also be equivalently rewritten as 
\begin{equation} \label{Om_iii}
\Omega = \bar{q} \hp [\hp s\, \omega + (\overline{\sigma^R \!- \theta \hp}) \nhp\wedge\nhp (\sigma^R \!- \theta \hp) \hp] \hp q
\end{equation}
or, in components, with notations introduced in the previous subsection, 
\begin{equation}
\Omega_{\hp i} = |q|^2 R_{ij}(q^{-1}) \hp [ \hp s \, \omega_j +  \langle u_au_j,u_b \hp \rangle \pt (\sigma_{\mathrlap{a}}{}^R \npt - \theta_a) \nhp\wedge\nhp (\sigma_{\mathrlap{b}}{}^R \npt - \theta_b)\hp]
\end{equation}
where it is understood that \mbox{$\theta_0 = 0$}. The last expression, in particular, displays the $\Omega_i$ as block-diagonally split along the horizontal and vertical subspaces determined locally by the fiber structure. So then, in order to show that they satisfy the algebraic condition of Theorem~\ref{QK-criterion-2}, it suffices to verify that their horizontal and vertical components satisfy it separately.  (It is worth noting also that the twisting prefactor can can be safely ignored since the algebraic condition is preserved by simultaneous rescalings and $SO(3)$ rotations, see the Remark following Theorem \ref{HK-criterion}.) This is indeed ensured, on the horizontal side, by the fact that the quaternionic K\"ahler \mbox{2-forms} $\omega_i$ satisfy themselves the algebraic condition, and on the vertical side, by the fact that the map \mbox{$u_i \mapsto \langle u_au_i, u_b \hp \rangle$} gives a four-dimensional real matrix representation of the standard  imaginary quaternions. 

Regarding the cone structure, note that Swann bundles possess a natural \mbox{$\mathbb{H}^{\times}\nhp/\mathbb{Z}_2${\hp-\hp}action} induced by right quaternionic multiplication in the fiber: taking \mbox{$q \mapsto q \hp v$} with \mbox{$v \in \mathbb{H}^{\times}$} constant results in \mbox{$\Omega \mapsto \bar{v} \pt \Omega \pt v$} and \mbox{$G \mapsto |v|^2 G$}. The $\mathbb{Z}_2$-quotient reflects the invariance of these transformations under \mbox{$v \mapsto - v$}. The infinitesimal generators of this action are the  left-invariant vertical vector fields, and we have
\begin{equation}
\iota_{\ell^L_i} \Omega_j = \varepsilon_{ijk} \pt \iota_{\ell^L_0} \Omega_k - \hp \delta_{ij} \pt d \hp |q|^2
\qquad\text{and}\qquad
\mathcal{L}_{\ell^L_0} \Omega_k = 2 \pt \Omega_k
\end{equation}
which then by part $(e)$ of Proposition \ref{HKC-criterions} means that the bundle $\mathcal{U}(M)$ has a hyperk\"ahler cone structure with hyperk\"ahler potential \mbox{$|q|^2$} and homothetic Euler vector field $\smash{ \ell_0^L }$. 
\end{proof}

Observe moreover, in line with our stance of regarding the metric as a composite object, that from the formulas \eqref{Sw-Om-1} and \eqref{Om_iii} for the hyperk\"ahler symplectic forms one can work out for the hyperk\"ahler metric on $\mathcal{U}(M)$ the expressions
{\allowdisplaybreaks
\begin{align}
G & = s \pt g \hp |q|^2 + | dq + \theta q |^2 
\intertext{and}
G & = |q|^2 \pt (s \pt g + | \sigma^R \!- \theta \hp |^2 ) \label{G_iii} 
\end{align}
}%
respectively. From either one of these it is clear that if the signature of the quaternionic K\"ahler metric $g$ is $\smash{ (4n_+,4n_-) }$, then the signature of Swann's hyperk\"ahler metric $G$ is either $\smash{ (4n_+ \npt +4,4n_-) }$ if \mbox{$s>0$}, or $\smash{ (4n_- \npt + 4,4n_+) }$ if \mbox{$s<0$}. 

Likewise, one can also work out the corresponding hyperk\"ahler complex structures on $\mathcal{U}(M)$ to obtain
\begin{equation}
I_i^{\phantom{.}} = R_{ij}(q^{-1}) \hp [\pt \mathscr{I}_{\mathrlap{j}}{}^{\pt H} + \langle u_au_j,u_b \hp \rangle \, \ell_{\mathrlap{a}}{}^R \otimes (\sigma_{\mathrlap{b}}{}^R \npt - \theta_b) \hp ]
\end{equation}
where the $\smash{ \mathscr{I}_{\mathrlap{i}}{}^{\pt \raisebox{0.4pt}{$\scriptstyle H$}} }$ represent the horizontal lifts to $\mathcal{U}(M)$ of the almost complex structures $\smash{ \mathscr{I}_i }$ associated to the triplet of 2-forms $\omega_i$ on $M$. (One-forms on $M$ can be lifted to $\mathcal{U}(M)$ by pullback, while the horizontal lift of a vector field $\mathscr{X}$ on $M$ to $\mathcal{U}(M)$ is given by $\smash{ \mathscr{X}^H = \mathscr{X} + (\iota_{\mathscr{X}} \theta_i) \hp \ell_{\mathrlap{i}}{}^{\raisebox{0.3pt}{$\scriptstyle R$}} }$; elements of $\text{End}(M) \cong TM \otimes \Lambda^1\npt M$ such as the $\smash{ \mathscr{I}_i }$ are then lifted in accordance with the tensor product rule.) This shows that the complex structures $I_i$, too, split block-diagonally along the local fibration structure into horizontal and vertical components, with the first ones induced by the quaternionic structure on the quaternionic K\"aher base and twisted by a fiber coordinate-dependent $SO(3)$ rotation, and the second ones induced by the left-invariant hypercomplex structure on the \mbox{$\mathbb{H}^{\times}\nhp/\mathbb{Z}_2$} fiber (see in particular the first formula \eqref{hcstrs_H*_2}). This is reminiscent of the way in which the integrable complex structure on the twistor space of a hyperk\"ahler manifold was constructed in \cite{MR877637}. 


The previous theorem admits the following converse, which we give here without proof and refer the reader to Theorem~5.9 in \cite{MR1096180}:

\begin{theorem}[Swann \cite{MR1096180}] \label{HKC=Sw}
Any hyperk\"ahler cone with positive hyperk\"ahler potential, whose four canonical vector fields generate a locally free $\smash{\mathbb{H}^{\times} \nhp / \mathbb{Z}_2}$-action is locally homothetic to a Swann bundle.
\end{theorem}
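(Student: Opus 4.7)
The plan is to exhibit, for any such hyperk\"ahler cone $N$, a local quaternionic K\"ahler manifold $M$ together with a local isomorphism (up to an overall homothety) between $N$ and the Swann bundle $\mathcal{U}(M)$.

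First, since the $\mathbb{H}^\times/\mathbb{Z}_2$-action generated by $X, X_1, X_2, X_3$ is locally free, the orbit space $M = N/(\mathbb{H}^\times/\mathbb{Z}_2)$ inherits locally a smooth manifold structure of real dimension $\dim N - 4$, and the projection $\pi \colon N \to M$ is locally a principal $\mathbb{H}^\times/\mathbb{Z}_2$-bundle. Because the hyperk\"ahler potential $\kappa > 0$ satisfies $\mathcal{L}_X \kappa = 2 \kappa$, one can pick a local section $s \colon U \to N$, with $U \subset M$ open, for which $\kappa \circ s = 1$; transporting this section along the $\mathbb{H}^\times/\mathbb{Z}_2$-action produces a trivialization $\pi^{-1}(U) \cong U \times (\mathbb{H}^\times/\mathbb{Z}_2)$ in which the four canonical vector fields become the left-invariant fields $\ell_0^L, \ell_1^L, \ell_2^L, \ell_3^L$ and the potential becomes $\kappa = |q|^2$.

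Next I would extract the base data $\omega$ and $\theta$. The $G$-orthogonal complement to the vertical subspace spanned by the canonical vector fields is preserved by the full action because $X$ is homothetic and the $X_i$ are Killing; it therefore defines a principal connection on $\pi$, which in the trivialization above takes the form $Dq = dq + \theta q$ for a unique $\mathfrak{sp}(1)$-valued 1-form $\theta$ on $U$. I would then decompose the $\Im \mathbb{H}$-valued hyperk\"ahler 2-form $\omega^N = \omega^N_i u_i$ along the horizontal-vertical splitting. The homothetic property $\mathcal{L}_X \omega^N = 2 \omega^N$ forces the horizontal part to scale with weight $|q|^2$, while the rotation property \eqref{HKC_rot}, which in quaternionic form amounts to $\mathcal{L}_{X_i} \omega^N = [\omega^N, u_i]$, forces it to transform in the adjoint representation through $q$. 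A short algebraic calculation then shows that the unique form compatible with both requirements is $s \, \bar q \, \omega \, q$ for a non-zero constant $s$ and an $\Im \mathbb{H}$-valued 2-form $\omega$ on $U$; matching the remaining vertical components pins down
\begin{equation*}
\omega^N = s \, \bar q \, \omega \, q + (\overline{Dq}) \wedge Dq,
\end{equation*}
which is exactly Swann's formula \eqref{Sw-Om-1}.

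Finally I would verify that the triple $(M, \omega, \theta)$ satisfies the quaternionic K\"ahler criterion of Theorem~\ref{QK-criterion-2}. The algebraic condition for the triplet $\omega_i$ descends from the algebraic condition for $\omega^N_i$ upon restricting to horizontal directions, the vertical contributions being absorbed by the quaternionic algebra of the Cartan--Maurer components. The differential condition $d\theta + \theta \wedge \theta = s \, \omega$ then emerges by applying $d$ to the decomposition above, using $d\omega^N = 0$ together with the Cartan--Maurer identities on the fibers, and separating base from fiber contributions. Consistency on overlaps $U \cap V$ is automatic: two local sections differ by an $Sp(1)/\mathbb{Z}_2$-valued transition function, which acts on $(\omega, \theta)$ precisely as an $SO(3)$ gauge transformation, endowing them with the structure of a local frame and connection on the $SO(3)$-bundle required by Theorem~\ref{QK-criterion-1}. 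The hardest step is the decomposition argument in the preceding paragraph: extracting $\omega$ and $\theta$ uniquely out of $\omega^N$ requires a tight interplay of the homothetic, rotational, and cone structures, and it is here that the positivity of $\kappa$ and the locally free nature of the action enter essentially\,---\,forcing the Swann ansatz rather than merely permitting it.
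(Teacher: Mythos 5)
The paper gives no proof of this statement; it is quoted from Swann with a pointer to Theorem~5.9 of \cite{MR1096180}. Your proposal must therefore be judged against the standard argument rather than against anything in the text, and in outline it \emph{is} that standard argument: form the local quotient by the locally free action, use $\mathcal{L}_X\kappa = 2\hp\kappa$ and $\kappa>0$ to normalize a local trivialization so that $\kappa=|q|^2$ and the canonical fields become the invariant vertical fields, take the $G$-orthogonal complement of the orbits as a principal connection, and show that equivariance plus closure of the $\omega^N_i$ force Swann's formula \eqref{Sw-Om-1} together with the Einstein condition \eqref{Einst_cond-q} on the quotient. The patching argument via $SO(3)$-valued transition functions at the end is also the right one.

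The one step you assert rather than establish is the block-diagonality of $\omega^N$ with respect to the horizontal--vertical splitting: you ``decompose along the splitting'' and then ``match the remaining vertical components,'' but the equivariance argument by itself does not exclude mixed horizontal--vertical terms. This is precisely where criterion $(e)$ of Proposition~\ref{HKC-criterions} earns its keep: the identities $\iota_{X_i}\omega^N_j=\varepsilon_{ijk}\hp\iota_X\omega^N_k-\delta_{ij}\hp d\kappa$, combined with the facts that $d\kappa=\iota_XG$ and $\iota_X\omega^N_k=\iota_{X_k}G$, show that contracting any $\omega^N_j$ with a vertical vector yields a 1-form annihilating the $G$-orthogonal complement of the orbits; hence the mixed components vanish, and the purely vertical block is fixed by the values $\omega^N_l(X_a,X_b)$, which the quaternionic algebra forces to agree with those of $(\overline{dq+\theta q})\wedge(dq+\theta q)$. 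Relatedly, your claim that $d\omega^N=0$ ``separates into base and fiber contributions'' to give $d\theta+\theta\wedge\theta=s\,\omega$ needs one more sentence: rewriting $\omega^N=\bar q\hp(s\,\omega-d\theta-\theta\wedge\theta)\hp q-d\hp[\hp\bar q\hp(\sigma^R\!-\theta)\hp q\hp]$, closure forces $d\hp[\hp\bar q\hp(s\,\omega-d\theta-\theta\wedge\theta)\hp q\hp]$ to vanish, and by the identity \eqref{dqrbq} the terms proportional to $\sigma^R_0$ and $\vec{\sigma}^{R}$\,---\,vertical 1-forms wedged against a basic 2-form\,---\,can only cancel if that basic 2-form vanishes identically, which is what produces the Einstein condition pointwise rather than merely up to a closed remainder. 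With these two points supplied, your sketch closes up into a complete proof.
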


Note that when the metric of a hyperk\"ahler cone is positive definite, the formula \eqref{kappa_gee} makes it clear that its hyperk\"ahler potential can always be chosen to be strictly positive. In indefinite signature, on the other hand, the hyperk\"ahler potential can always be made at least locally positive, if it is not already so, by flipping the signature of the metric.

\subsubsection{}

Let us consider now how Killing symmetries fit into this construction. The main result in this regard is the following:

\begin{theorem} \label{Sw-symm}
Let $M$ be a quaternionic K\"ahler manifold with Swann bundle $\mathcal{U}(M)$. Then any Killing vector field $\mathscr{X}$ on $M$ can be lifted to a tri-Hamiltonian vector field $X$ on $\mathcal{U}(M)$ which commutes with the \mbox{$\mathbb{H}^{\times}\nhp/\mathbb{Z}_2${\hp-\hp}action}, and conversely, any tri-Hamiltonian vector field $X$ on $\mathcal{U}(M)$ which commutes with the \mbox{$\mathbb{H}^{\times}\nhp/\mathbb{Z}_2${\hp-\hp}action} descends to a Killing vector field $\mathscr{X}$ on $M$. If $\smash{ \nu_{\mathscr{X}i} }$ is the quaternionic K\"ahler moment map of $\mathscr{X}$, we have 
\begin{equation} \label{X_Sw}
X = \mathscr{X}^H \npt + \nu_{\mathscr{X}}{}_{\! i} \pt \ell_{\mathrlap{i}}{}^R \mathrlap{.}
\end{equation}
Moreover, there exists a choice of hyperk\"ahler moment map $\mu_{X i}$ for $X$ such that the two moment maps are related by 
\begin{equation} \label{Sw_mom_maps}
\vec{\mu}_X = \bar{q} \pt \vec{\nu}_{\mathscr{X}} q \mathrlap{.}
\end{equation}
Both the descent and the lift maps preserve the Lie bracket.
\end{theorem}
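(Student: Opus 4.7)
The plan is to work in the quaternionic notation of subsection~3.1, writing $\omega = \omega_i u_i$, $\theta = \theta_i u_i$, and $\nu_{\mathscr{X}} = \nu_{\mathscr{X}i}\hp u_i$ so that Swann's 2-form reads
\begin{equation*}
\Omega = s\pt\bar{q}\pt\omega\pt q + (\overline{dq+\theta q})\wedge(dq+\theta q).
\end{equation*}
First I would establish the forward direction (a Killing $\mathscr{X}$ produces a tri-Hamiltonian lift $X$ of the stated form, with the stated moment map), then verify fiber commutativity and run the same calculation in reverse to obtain the converse, and finally deduce Lie bracket preservation as an immediate consequence of the descent--lift bijection.

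\textbf{Forward direction.} Define $X$ by the formula in the theorem. The key technical step is to compute $\iota_X(dq + \theta q)$: since $\theta$ is the pullback of a form on $M$, $\iota_X\theta = \iota_\mathscr{X}\theta$, and a dual-frame calculation starting from $\sigma^R = -(dq)q^{-1}$ yields $\iota_{\ell_a^R}(dq) = -\pt u_a\hp q$. Hence the horizontal-lift correction in $\mathscr{X}^H$ contributes $-(\iota_\mathscr{X}\theta)q$, which cancels against $\iota_{\mathscr{X}^H}(\theta q) = (\iota_\mathscr{X}\theta)q$, while the $\nu$-term contributes $-\nu_{\mathscr{X}}q$, giving
\begin{equation*}
\iota_X(dq+\theta q) = -\pt\nu_{\mathscr{X}}\pt q, \qquad \iota_X(\overline{dq+\theta q}) = \bar{q}\pt\nu_{\mathscr{X}}.
\end{equation*}
Substituting these into $\iota_X\Omega$ and invoking the Galicki--Lawson equation \eqref{QK-mom-map} in its quaternionic form $d\nu_{\mathscr{X}} = s\hp\iota_\mathscr{X}\omega - \theta\nu_{\mathscr{X}} + \nu_{\mathscr{X}}\theta$ (where the commutator encodes $2\vec\theta\vprod\vec\nu_{\mathscr{X}}$), a brief algebraic rearrangement delivers $\iota_X\Omega = d(\bar{q}\pt\nu_{\mathscr{X}}\pt q)$. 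By Cartan's homotopy formula $\mathcal{L}_X\Omega_i = 0$, so $X$ is tri-Hamiltonian, and simultaneously the hyperk\"ahler moment map is identified as $\vec\mu_X = \bar{q}\pt\vec\nu_{\mathscr{X}}\pt q$.

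\textbf{Fiber commutativity and converse.} The commutation $[X,\ell_a^L]=0$ follows from \eqref{ells} (which gives $[\ell_i^R,\ell_a^L]=0$), from the fact that $\nu_{\mathscr{X}i}$ and $\iota_\mathscr{X}\theta_i$ are pullbacks from $M$ and so annihilated by the vertical $\ell_a^L$, and from the equivariance of the horizontal distribution under the principal fiber action (which ensures $[\mathscr{X}^H,\ell_a^L]=0$). For the converse, any tri-Hamiltonian $X$ commuting with all $\ell_a^L$ is invariant under the $\mathbb{H}^\times\nhp/\mathbb{Z}_2$ action and hence projects to a well-defined vector field $\mathscr{X}$ on $M$, admitting a local decomposition $X = \mathscr{X}^H + \phi_i\ell_i^R$. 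Reversing the forward calculation, the condition $\iota_X\Omega = d\mu$ reduces, upon collecting horizontal and vertical components, to precisely the quaternionic K\"ahler moment map equation \eqref{QK-mom-map} for $\mathscr{X}$ with the $\phi_i$ playing the role of $\nu_{\mathscr{X}i}$; Theorem~\ref{QK-mm-th} then forces $\mathscr{X}$ to be Killing with $\phi_i = \nu_{\mathscr{X}i}$, confirming both the formula and $\mu = \bar{q}\nu_{\mathscr{X}}q$.

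\textbf{Lie bracket preservation.} Given Killing $\mathscr{X}$, $\mathscr{Y}$ with lifts $X$, $Y$, the bracket $[X,Y]$ is automatically tri-Hamiltonian since $\mathcal{L}_{[X,Y]}\Omega_i = [\mathcal{L}_X,\mathcal{L}_Y]\Omega_i = 0$, and it commutes with each $\ell_a^L$ by the Jacobi identity applied to the triple $(\ell_a^L,X,Y)$. The descent--lift correspondence just established then identifies $[X,Y]$ as the tri-Hamiltonian lift of its $M$-projection, which by functoriality of the pushforward is $[\mathscr{X},\mathscr{Y}]$; uniqueness of the lift formula gives $[X,Y] = [\mathscr{X},\mathscr{Y}]^H + \nu_{[\mathscr{X},\mathscr{Y}]i}\pt\ell_i^R$. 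I expect the principal obstacle to lie in the forward computation: although each individual manipulation is short, the successful emergence of $\iota_X\Omega = d(\bar{q}\nu_{\mathscr{X}}q)$ requires careful bookkeeping of the quaternionic algebra and hinges crucially on the Einstein condition \eqref{Einst_cond-q} (which is what renders $\Omega$ closed in the first place) and on the precise form of the Galicki--Lawson equation \eqref{QK-mom-map}.
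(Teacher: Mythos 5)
Your argument is essentially the paper's: both proofs contract the lift $X$ into Swann's 2-form and use the Galicki--Lawson equation to exhibit $\iota_X\vec{\Omega}=d(\bar q\,\vec\nu_{\mathscr X}\hp q)$. The only organizational difference is that the paper first rewrites $\Omega=\bar q\,[\hp s\,\omega+\bar\alpha\wedge\alpha\hp]\hp q$ with $\alpha=\sigma^R-\theta$ and isolates the contraction formula \eqref{i_X_Om^Sw} as a lemma valid for any $X$ preserving $|q|^2$, whereas you contract directly into $s\,\bar q\,\omega\,q+(\overline{dq+\theta q})\wedge(dq+\theta q)$; since $dq+\theta q=-(\sigma^R-\theta)\hp q$, these are the same computation. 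Your forward direction, the fiber-commutativity check, and the Lie-bracket argument (which replaces the paper's ``verify explicitly'' by Jacobi plus the descent--lift correspondence) are all sound.

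There is one genuine gap, in the converse. A vector field on $\mathcal{U}(M)$ commuting with the fiber action decomposes as $X=\mathscr X^H+\nu_0\hp\ell_0^R+\nu_i\hp\ell_i^R$ --- the vertical distribution is four-dimensional --- and you have silently dropped the $\ell_0^R$ component. This is not cosmetic: if $\nu_0\neq 0$ then $\iota_X(dq+\theta q)$ acquires a real part, the contraction of $\Omega$ picks up extra terms involving $\sigma_0^R$, and the computation no longer reduces to the Galicki--Lawson equation. The paper closes this by noting that a tri-Hamiltonian field is tri-holomorphic, so by Lemma~\ref{plurih} (applied to all three complex structures) it preserves the hyperk\"ahler potential up to an additive constant; since $X(|q|^2)=-2\hp\nu_0\hp|q|^2$ and $|q|^2$ is non-constant, this forces $\nu_0=0$. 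You need this step (or an equivalent one) before asserting the decomposition $X=\mathscr X^H+\phi_i\hp\ell_i^R$. Relatedly, your phrase ``reversing the forward calculation'' should be made precise: from $\iota_X\vec\Omega=\bar q\,(\hp s\,\iota_{\mathscr X}\vec\omega-d\vec\nu-2\hp\vec\theta\times\vec\nu\hp)\hp q+d(\bar q\,\vec\nu\hp q)$ one concludes that the first term, a horizontal 1-form, must have vanishing exterior derivative; separating the resulting identity by horizontal/vertical bidegree (the mixed piece is $-2\hp\sigma_0^R\wedge(\cdots)$) is what actually forces the Galicki--Lawson defect to vanish pointwise rather than merely be closed.
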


\begin{remark}
In equation \eqref{Sw_mom_maps} we have deliberately blurred the line between $\mathbb{R}^3$-vectors and imaginary quaternions in view of the isomorphism $\smash{ \Im \mathbb{H} \cong \mathbb{R}^3 }$. That is, despite employing the traditional $\mathbb{R}^3$-vector notation, we clearly regard the two moment maps as $\Im \mathbb{H}$-valued functions. In what follows we will institutionalize this practice and often resort to this notational ambivalence when the context allows for an unequivocal interpretation. 
\end{remark}

\begin{proof}

In what follows we denote, for conciseness, the $\mathbb{H}$-valued vertical 1-form $\smash{ \sigma^R \! - \theta }$ by $\alpha$, with $\smash{ \alpha_0 }$ and $\smash{ \vec{\alpha}  }$ designating its real and quaternionic imaginary components, respectively. Let us begin then by stating the following lemma: for any vector field $X$ on $\mathcal{U}(M)$ preserving the hyperk\"ahler potential\,---\,that is, such that \mbox{$X(|q|^2) = 0$}, or equivalently, such that \mbox{$\iota_X\alpha_0 = 0$}\,---\,we have
\begin{equation} \label{i_X_Om^Sw}
\iota_X \vec{\Omega} = \bar{q} \hp [\pt s \, \iota_X \vec{\omega} - d(\iota_X\vec{\alpha} \hp) - 2 \hp \vec{\theta} \times \npt (\iota_X\vec{\alpha} \hp) ] q + d \hp [\pt \bar{q} \hp (\iota_X\vec{\alpha} \hp) q \pt] \mathrlap{.}
\end{equation}
This follows by writing the formula \eqref{Om_iii} in the form \mbox{$\vec{\Omega} = \bar{q} \hp (\hp s\, \vec{\omega} + 2\pt \alpha_0 \wedge \vec{\alpha} - \vec{\alpha} \wedge \vec{\alpha} \,)  q$} and using the identity
\begin{equation} \label{dqrbq}
d(\hp \bar{q} \pt \vec{\rho} \, q \hp) = \bar{q} \hp (\hp d\vec{\rho} - 2 \pt \sigma^R_0 \vec{\rho} + 2 \pt \vec{\sigma}^{\hp R} \!\times \npt \vec{\rho} \pt\,)  q
\end{equation}
valid for any triplet of functions $\smash{ \vec{\rho} }$. 

Assume now that $\mathscr{X}$ is some Killing vector field on $M$. Let $\smash{ \vec{\nu}_{\mathscr{X}} }$ be its quaternionic K\"ahler moment map and define on $\mathcal{U}(M)$ the vector field $X$ to be the sum of the horizontal lift of $\mathscr{X}$ to $\mathcal{U}(M)$ and a vertical component as in equation \eqref{X_Sw}. Observe then on one hand that $X$, thus defined, commutes automatically with the generators $\smash{ \ell^L_a }$ of the \mbox{$\mathbb{H}^{\times}\nhp/\mathbb{Z}_2$-action} on $\mathcal{U}(M)$. On another hand we have $\smash{ \iota_X\alpha_0 = 0 }$ and $\smash{ \iota_X\vec{\alpha} = \vec{\nu}_{\mathscr{X}} }$, and so the formula \eqref{i_X_Om^Sw} applies and gives us immediately
\begin{equation} \label{i_X_Om_nu}
\iota_X \vec{\Omega} = \bar{q} \hp (\hp s \pt\hp \iota_{\mathscr{X}} \vec{\omega} - d\vec{\nu}_{\mathscr{X}} - 2 \pt \vec{\theta} \times \npt \vec{\nu}_{\mathscr{X}} ) q + d \hp (\hp \bar{q} \pt \vec{\nu}_{\mathscr{X}} q \pt) \mathrlap{.}
\end{equation}
The first term on the right-hand side vanishes by the Galicki--Lawson moment map equation for $\mathscr{X}$, leaving behind a hyperk\"ahler moment map equation for $X$, which is thus also tri-Hamiltonian, with hyperk\"ahler moment map \eqref{Sw_mom_maps}.

Conversely, let us consider on $\mathcal{U}(M)$ a tri-Hamiltonian vector field $X$ commuting with the canonical \mbox{$\mathbb{H}^{\times}\nhp/\mathbb{Z}_2${\hp-\hp}action}. Like any vector field on $\mathcal{U}(M)$, locally this can be decomposed with respect to the Swann fibration structure into horizontal and vertical components, $\smash{ X = \mathscr{X}^H + \nu_a \ell_{\mathrlap{a}}{}^R }$,  with $\mathscr{X}^H$ the horizontal lift of a vector $\mathscr{X} \in TM$ and the coefficients $\nu_a$ locally defined functions on $\mathcal{U}(M)$. The requirement that $X$ commute with the generators $\smash{ \ell^L_a }$ of the \mbox{$\mathbb{H}^{\times}\nhp/\mathbb{Z}_2${\hp-\hp}action} forces the coefficients $\nu_a$ to be independent of the fiber coordinates. Moreover, the condition that $X$ be tri-Hamiltonian implies that it must preserve the hyperk\"ahler potential $\smash{ |q|^2 }$ up to a constant additive shift (for this, note that tri-Hamiltonian vector fields are automatically tri-holomorphic; the result then follows by applying Lemma~\ref{plurih} for each hyperk\"ahler complex structure). Since $\smash{ X(|q|^2) = - \hp 2\pt \nu_{\hp 0} \pt |q|^2 }$, to avoid contradiction we must have $\smash{ \nu_{\hp 0} = 0}$. If we rename the non-vanishing coefficients $\smash{ \nu_i = \nu_{\mathscr{X}i} }$, then $X$ is formally of the form \eqref{X_Sw}. The formula \eqref{i_X_Om^Sw} gives us again just as above the equation \eqref{i_X_Om_nu}, from which we now conclude that
\begin{equation}
\mathcal{L}_X \vec{\Omega} = d\pt [\pt \bar{q} \hp (\hp s \pt\hp \iota_{\mathscr{X}} \vec{\omega} - d\vec{\nu}_{\mathscr{X}} - 2 \pt \vec{\theta} \times \npt \vec{\nu}_{\mathscr{X}} ) \hp q \pt] \mathrlap{.}
\end{equation}
By assumption this must vanish, which implies that $\smash{ \vec{\nu}_{\mathscr{X}} }$ must satisfy the Galicki--Lawson equation and is thus a quaternionic K\"ahler moment map for $\mathscr{X}$. By Theorem~\ref{QK-mm-th}, $\mathscr{X}$ is then necessarily a Killing vector field. 

Finally, given two pairs of vector fields $\mathscr{X}, X$ and $\mathscr{Y}, Y$ each of which satisfy the conditions of the theorem, one can verify explicitly that $\smash{ [X,Y] = [\mathscr{X},\mathscr{Y}]^H \npt + \nu_{[\mathscr{X},\mathscr{Y}]}{}_i \pt \ell_{\mathrlap{i}}{}^{\raisebox{0.3pt}{$\scriptstyle R$}} }$.
This proves the last statement of the theorem.
\end{proof}

\begin{remark}
Similarly to the hyperk\"ahler quotient construction \cite{MR877637}, the Galicki--Lawson generalized moment map can be used to define a quaternionic K\"ahler quotient construction by which a quaternionic K\"ahler space with continuous isometries is reduced to another quaternionic K\"ahler space in which the isometries are divided out \cite{MR872143, MR960830}. These quotient constructions are compatible with Swann bundles in the sense that the associated Swann bundle of the quaternionic K\"ahler quotient of $M$ by an isometric Lie group action is isomorphic to the hyperk\"ahler quotient at zero moment map level of the Swann bundle of $M$ by the corresponding lifted action \cite[Theorem 4.6]{MR1096180}.
\end{remark}

\section{Analogue of the extended Gibbons--Hawking Ansatz for quaternionic K\"ahler metrics} \label{sec:QK_analogue}

\subsection{The extended Gibbons--Hawking Ansatz} 

\subsubsection{}

The Gibbons--Hawking Ansatz \cite{Gibbons:1979zt} allows one to construct four-dimensional hyperk\"ahler metrics with a tri-Hamiltonian \mbox{$S^1$-action} out of the solutions of a set of partial differential monopole equations defined on an open subset of $\mathbb{R}^3$. This construction was extended by Pedersen and Poon in \cite{MR953820} (see also \cite{MR877637}) to \mbox{$4m$-dimensional} hyperk\"ahler metrics with a local tri-Hamiltonian \mbox{$\mathbb{R}^m$-action}, and in what follows we will refer to this generalization as the \textit{extended} Gibbons--Hawking Ansatz. 

Consider a principal $\mathbb{R}^m$-bundle $N$ over an open subset \mbox{$S \subset \mathbb{R}^m \npt\otimes \mathbb{R}^3$} equipped with a principal connection with Lie algebra-valued curvature 2-form $\smash{ (F_{\K})_{\K \in \pt \mathfrak{I}} }$, where $\mathfrak{I}$ denotes an index set of cardinality $m$, and a section \mbox{$(U_{\I\J})_{\I,\J \in \mathfrak{I}}$} of the symmetric second tensor power of the associated adjoint bundle, the Higgs field, such that $\smash{\det (U_{\I\J}) \neq 0}$. Regarding \mbox{$\mathbb{R}^m \npt\otimes \mathbb{R}^3$} as the space of configurations of $m$ distinguishable points in $\mathbb{R}^3$, one may coordinatize it globally with $m$ $\mathbb{R}^3$-vectors $\vec{x}^{\pt\K}$. Let $\smash{ \vec{\partial}_{\K} }$ denote the corresponding coordinate frame and assume that the following system of partial differential equations generalizing the abelian Bogomolny monopole equation in $\mathbb{R}^3$ holds on $S$:
\begin{equation} \label{ext_Bogo_eqs}
\vec{\partial}_{\I}U_{\K\J} = \vec{\partial}_{\J}U_{\K\I}
\qquad\text{and}\qquad
F_{\K}  = \star^{\I} dU_{\K\I} \mathrlap{.}
\end{equation}
The action of the linear Hodge-like star operators is defined by $\smash{ \star^{\I}d\vec{x}^{\pt\J} = \frac{1}{2} \pt d\vec{x}^{\pt\I} \!\wedge d\vec{x}^{\pt\J} }$, and summation over the repeated index {\small $I$} is understood. Consistency with the Bianchi identity for the curvature 2-form requires that the components of the Higgs field satisfy a set of second-order differential constraints generalizing the Laplace equation in $\mathbb{R}^3$.

To a solution $(F_{\K}, U_{\I\J})$ of these equations one then associates on the total space of the $\mathbb{R}^m$-bundle $N$ the triplet of 2-forms
\begin{equation} \label{Om_GH}
\vec{\Omega} = - \pt \frac{1}{2}\pt U_{\I\J}\pt d\vec{x}^{\pt\I} \!\wedge d\vec{x}^{\pt\J} - d\vec{x}^{\pt\K} \!\wedge (d\psi_{\K} + A_{\K}) 
\end{equation}
expressed here in a local trivialization, with $\psi_{\I}$ coordinates on the fibers and $A_{\K}$ a local connection 1-form on the base $S$ with curvature \mbox{$dA_{\K} = F_{\K}$}. The two extended Bogomolny equations guarantee then that $\smash{ \vec{\Omega} }$, thus defined, is closed. What is more (see an argument below), its components also satisfy the algebraic condition of Theorem~\ref{HK-criterion}. Therefore, by this theorem, they define on $N$ a hyperk\"ahler structure, with hyperk\"ahler metric 
\begin{equation} \label{metric_GH}
G = \frac{1}{2} \pt U_{\I\J}\pt d\vec{x}^{\pt\I} \!\cdot d\vec{x}^{\pt\J} + \frac{1}{2} \pt U^{\I\J}(d\psi_{\I} + A_{\I}) (d\psi_{\J} + A_{\J}) 
\end{equation}
where $U^{\I\J}$ denotes the matrix inverse of $U_{\I\J}$. The vertical vector fields $\smash{ \partial_{\psi_{\K}} }$ generate a tri-Hamiltonian $\mathbb{R}^m$-action with hyperk\"ahler moment maps $\vec{x}^{\pt\K}$. It is a signature characteristic of the Gibbons--Hawking setup and its higher-dimensional extensions that the rank of the action is just large enough for the group orbit parameters together with the moment map functions to provide a complete parametrization of the space.

\subsubsection{}

Conversely, any  \mbox{$4m$-dimensional} hyperk\"ahler manifold with a free local tri-Ha\-mil\-to\-ni\-an \mbox{$\mathbb{R}^m$-action} arises locally in this way. This follows immediately from Proposition~2.1 in \cite{MR1704547}.

\subsubsection{} \label{GT-quatern}

These formulas admit an equivalent formulation which brings to the fore their quaternionic rather than their $\mathbb{R}^m$-bundle structure. Consider the local coframe of the cotangent bundle of $N$ given by the $4m$ real components of the quaternionic-valued \mbox{1-forms}
\begin{equation} \label{H^I_def}
H^{\I} = U^{\I\J}(d\psi_{\J}+A_{\J}) + d\vec{x}^{\pt\I} 
\end{equation}
where in accordance with our established practice we regard $\mathbb{R}^3$-vectors as imaginary quaternions. Assuming the same natural definitions as before for the exterior and tensor products of quaternion-valued forms, we can then recast the above formulas for the triplet of 2-forms and metric in the following compact form: 
\begin{align} \label{GH_quat}
\Omega & = \frac{1}{2} \pt U_{\I\J} \bar{H}^{\I} \!\wedge H^{\J} \\
G & = \frac{1}{2} \pt U_{\I\J} \bar{H}^{\I} H^{\J} \mathrlap{.} \nonumber
\end{align}
Here we view again $\Omega$ as an imaginary quaternion-valued 2-form. Note by the way that this formula for $\Omega$ yields immediately, with the notations from \mbox{\S\,\ref{ssec:qu_in_pr}}, the further expression
\begin{equation}
\Omega_{\hp i} = \frac{1}{2} \pt U_{\I\J} \langle u_au_i,u_b \hp \rangle \pt H_{\mathrlap{a}}{}^{\I} \pt\wedge H_{\mathrlap{b}}{}^{\J} \mathrlap{.}
\end{equation}
In this coframe the algebraic condition of Theorem~\ref{HK-criterion} is straightforward to check if one recalls that the map \mbox{$u_{\hp i} \mapsto \langle u_a u_{\hp i},u_b \hp \rangle$} gives a four-dimensional representation of the standard basis of imaginary quaternions.

The second extended Bogomolny equation can be replaced in this framework by the differential condition
\begin{equation}
dH^{\I} = - \pt \frac{1}{2} \pt U^{\I\L} \partial_{x^{\K}_k} U_{\L\J} (H^{\K} \!\wedge \bar{H}^{\J})_k  \mathrlap{.}
\end{equation}
Another, more interesting, reformulation of the extended Bogomolny equations emerges if one considers instead the \textit{dual} frame to the above quaternionic coframe of \mbox{$T^*\npt N$}. This is given explicitly by the vertical vector fields $\smash{ E_{\I 0} = U_{\K\I} \partial_{\psi_{\K}} }$ together with the horizontal lifts $E_{\I i}$ to $N$ of the coordinate frame $\smash{ \partial_{x^{\I}_i}}$ on the base $S$ (the horizontal lift map is $\smash{ X \mapsto X - (\iota_XA_{\K}) \pt \partial_{\psi_{\K}} }$ for any $\smash{ X \in TS }$). Then the two extended Bogomolny equations \eqref{ext_Bogo_eqs} are respectively equivalent to the following two sets of commutator relations
\begin{equation}
[E_{\I 0}, E_{\J k}] = [E_{\J 0}, E_{\I k}]
\qquad\text{and}\qquad
[E_{\I i}, \pt E_{\J j} \hp] \pt = \varepsilon_{ijk} \pt [E_{\I 0}, E_{\J k}] 
\end{equation}
with a structure reminiscent of that of the self-dual Yang-Mills (SDYM) equations. For \mbox{$m=1$} it is indeed well known that the Bogomolny equations emerge via reduction of the SDYM equations from four to three dimensions, and this formulation makes that origin transparent.

\subsection{Extended Gibbons--Hawking spaces with a hyperk\"ahler cone structure} \label{sec:GH-HKC}

\subsubsection{} \label{ssec:collective}

Let us investigate now under what conditions an extended Gibbons--Hawking hyperk\"ahler space possesses a hyperk\"ahler cone structure compatible with its $\mathbb{R}^m$-bundle structure. The first thing to observe in this regard is that on the base of any extended Gibbons--Hawking fibration one has a natural action of the group $\smash{ \mathbb{H}^{\times} \npt /\mathbb{Z}_2 }$, namely the one generated by the vector fields 
\begin{equation} \label{the_Ls}
\vec{L} = - \, \vec{x}^{\pt\K} \!\times \vec{\partial}_{\K}
\qquad \text{and} \qquad
L_0 = \vec{x}^{\pt\K} \!\cdot \vec{\partial}_{\K} 
\end{equation}
satisfying the algebra
\begin{equation}
[\hp L_i, L_j \hp ] = \varepsilon_{ijk} \hp L_k
\qquad\qquad
[\hp L_i, L_0 ] = 0 \mathrlap{.}
\end{equation}

\begin{remark}
The $\mathfrak{so}(3)$ structure constants here differ from the ones we have encountered so far, such as the ones in equations \eqref{so(3)-alg} or \eqref{ells}, by a factor of 2. This discrepancy can be removed by a simple rescaling of the definitions of the generators and is a matter of convention and convenience rather than of any profound significance. In practice, one needs to be alert to this normalization issue when using here the results of section~\ref{ssec:HKCs} and scale accordingly.
\end{remark}

\noindent In the three-dimensional configuration-of-points picture for $\smash{ \mathbb{R}^m \npt\otimes \mathbb{R}^3 }$, $\smash{ \vec{L} }$ and $L_0$ are the generators of \textit{rigid rotations} respectively \textit{simultaneous scalings} of configurations. Borrowing a term from the classical mechanics of rigid bodies we will refer to such transformations as \textit{collective transformations}. 

Inserting the generating vector fields of collective transformations into the second Bogomolny equation \eqref{ext_Bogo_eqs} and then using the first one to manipulate the result, we get that
\begin{equation} \label{LaF}
\iota_{L_a} F_{\I} = d(U_{\I\J} \pt x_{\mathrlap{a}}{}^{\J}) + c_{\I a}
\end{equation}
where the index $a$ runs as usual from 0 to 3, $\smash{ x_{\mathrlap{a}}{}^{\raisebox{0.3pt}{$\scriptstyle \J$}} }$ are the components of the imaginary quaternions associated to the $\smash{ \mathbb{R}^3 }$-vectors $\smash{ \vec{x}^{\pt\J} }$ (with, therefore, $\smash{ x^{\J}_0 = 0}$), and $c_{\I a}$ denote the components of the 1-forms
\begin{equation}
c_{\I 0} = \vec{L}U_{\I\J} \sprod d\vec{x}^{\pt\J}
\quad\text{and}\quad
\vec{c}_{\I} = - \, \vec{L}U_{\I\J} \vprod d\vec{x}^{\pt\J} \npt - (L_0U_{\I\J} + U_{\I\J}) \pt d\vec{x}^{\pt\J} \mathrlap{.}
\end{equation}
Note, incidentally\,---\,and the relevance of this observation will soon become apparent\,---\,that by resorting further to the first Bogomolny equation we can show that
\begin{equation} \label{xc}
\vec{x}^{\pt\I} \!\sprod \vec{c}_{\I} =  \frac{1}{2} \pt U_{\I\J} \hp d(\hp \vec{x}^{\pt\I} \npt\sprod \vec{x}^{\pt\J}) - d(\hp U_{\I\J} \pt \vec{x}^{\pt\I} \!\sprod \vec{x}^{\pt\J})
\end{equation}
and then that
\begin{equation} \label{starc0}
\star^{\I} c_{\I 0} = d(\vec{x}^{\pt\I} \!\sprod \vec{c}_{\I}) \mathrlap{.}
\end{equation}

Formula \eqref{LaF} suggests that, when approaching the question of whether the above \mbox{$\mathbb{H}^{\times} \npt /\mathbb{Z}_2$}-action on the base can be lifted to a full hyperk\"ahler cone structure on $N$, we should consider not just the horizontal lifts of the generators $L_a$ but rather the lifts 
\begin{equation} \label{La-lift}
X_a = L_a^{\phantom{.}} - (\iota_{L_a}A_{\I} + U_{\I\J}\pt x_{\mathrlap{a}}{}^{\J} \hp) \pt \partial_{\psi_{\I}}
\end{equation}
containing additional vertical components. Using nothing but the definitions of the quantities involved and no field equations\,---\,yet, one can then show by a rather long but otherwise straightforward calculation that these vector fields satisfy the identities
\begin{gather}
\iota_{X_i} \Omega_{\pt j} = \varepsilon_{ijk} \, \iota_{X_0} \Omega_{k} - \frac{1}{2} \pt \delta_{ij} \pt U_{\I\J} \pt d(\hp \vec{x}^{\pt\I} \!\sprod \vec{x}^{\pt\J}) \\[-2pt]
\mathcal{L}_{X_0} \Omega_k = \Omega_k - \frac{1}{2}(L_0U_{\I\J} + U_{\I\J}) (\hp d\vec{x}^{\pt\I} \!\wedge d\vec{x}^{\pt\J})_k + (\iota_{L_0}F_{\I})  \npt\wedge\npt dx_{\mathrlap{k}}{}^{\raisebox{0.3pt}{$\scriptstyle \I$}} \mathrlap{.} \nonumber
\end{gather}
Note in particular that any other choice of lifting prescription would spoil the very convenient and simple tensorial structure of the right-hand side of the first equality. By deploying now in the first identity the relation \eqref{xc} and in the second one the \mbox{$a=0$} component of the relation \eqref{LaF}, both consequences of the field equations, we can eventually cast them in the form
\begin{equation} \label{HKC-obstr}
\iota_{X_i} \Omega_{\pt j} = \varepsilon_{ijk} \, \iota_{X_0} \Omega_{k} -  \delta_{ij} [\hp d(U_{\I\J} \pt \vec{x}^{\pt\I} \!\sprod \vec{x}^{\pt\J}) + \vec{x}^{\pt\I} \!\sprod \vec{c}_{\I} \hp ]
\quad\text{and}\quad
\mathcal{L}_{X_0} \Omega_k = \Omega_k + \star^{\I} c_{\I k} 
\end{equation} 
with the same triplets of 1-forms $\vec{c}_{\I}$ that we have encountered above. These two formulas should be understood as expressing the obstructions to the criterion \mbox{$(e)$} of Proposition~\ref{HKC-criterions} (with the normalization adjustments stipulated in the Remark above) being satisfied by the lifts $X_a$ of the collective transformations generators $L_a$. We stress that all the equations that we have written down so far hold in full generality for any extended Gibbons--Hawking space. 

We are now ready to prove the following criterion: 

\begin{proposition} \label{GH-HKC}
The collective $\smash{ \mathbb{H}^{\times} \npt /\mathbb{Z}_2 }$-action on the base of an extended Gibbons--Hawking hyperk\"ahler space $N$ can be lifted to a hyperk\"ahler cone structure on $N$ if and only if the Higgs field satisfies the linear differential constraints
\begin{equation} \label{HKC-conds}
\vec{L} \, U_{\I\J} = \vec{\hp 0}
\qquad \text{and} \qquad 
L_0 \pt U_{\I\J} = - \hp U_{\I\J} 
\end{equation}
that is, if and only if it is invariant at rigid rotations and homogeneous of degree $-1$ in the $\smash{ \vec{x}^{\pt\I} }$ variables. In this case the hyperk\"ahler potential is given by 
\begin{equation}
U = 2 \pt U_{\I\J} \pt \vec{x}^{\pt\I} \!\sprod \vec{x}^{\pt\J} \mathrlap{.}
\end{equation}
\end{proposition}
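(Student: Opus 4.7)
The plan is to exploit the two obstruction identities \eqref{HKC-obstr}, which were derived above for the specific lifts \eqref{La-lift} without using any conditions on the Higgs field. These express the precise gap between what the $X_a$'s satisfy in general and what the four canonical vector fields of a hyperk\"ahler cone satisfy, namely the criterion $(e)$ of Proposition~\ref{HKC-criterions}, with $X_0$ playing the role of the Euler vector field and $X_1, X_2, X_3$ that of the rotational ones. The factor-of-2 normalization gap between the $\mathfrak{so}(3)$-bracket of the collective generators $L_a$ and the algebra \eqref{so(3)-alg} of Proposition~\ref{HKC-criterions} (flagged in the Remark at the start of \mbox{\S\,\ref{ssec:collective}}) should be kept in mind throughout; it amounts to working with $2X_a$ rather than $X_a$ when comparing against the Proposition.

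For sufficiency, I would simply substitute \eqref{HKC-conds} into the explicit formulas for $c_{\I 0}$ and $\vec{c}_{\I}$ introduced after \eqref{LaF}; both triplets vanish identically, so that the obstructions \eqref{HKC-obstr} collapse to
\begin{equation*}
\iota_{X_i}\Omega_{\pt j} = \varepsilon_{ijk} \, \iota_{X_0}\Omega_k - \delta_{ij} \, d(U_{\I\J}\, \vec{x}^{\pt\I}\sprod \vec{x}^{\pt\J}), \qquad \mathcal{L}_{X_0}\Omega_k = \Omega_k .
\end{equation*}
After the $X_a \mapsto 2 X_a$ rescaling these match precisely the relations \eqref{4vec_HKC} of criterion $(e)$, yielding an HKC structure on $N$ with hyperk\"ahler potential $U = 2\, U_{\I\J}\, \vec{x}^{\pt\I}\sprod \vec{x}^{\pt\J}$, thereby confirming also the closing formula.

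For necessity, assume $N$ carries an HKC structure whose canonical $\mathbb{H}^{\times}\nhp/\mathbb{Z}_2$-action descends to the collective action on the base. Any two lifts of the $L_a$'s differ by vertical shifts $w_a^{\I}\partial_{\psi_{\I}}$, and since the vertical generators $\partial_{\psi_{\I}}$ are tri-Hamiltonian while the rotational and dilatation vector fields of any HKC must commute with any tri-Hamiltonian action (they are intrinsically fixed by the hyperk\"ahler potential), the HKC's canonical lift is uniquely determined by the requirement that it commute with the $\partial_{\psi_{\I}}$'s. This is manifestly satisfied by \eqref{La-lift}, whose vertical coefficients $\iota_{L_a}A_{\I} + U_{\I\J}\hp x_a^{\J}$ depend only on the base coordinates, so we may identify the HKC lift with \eqref{La-lift}. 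Imposing now the HKC identities on \eqref{HKC-obstr} forces the obstruction terms to vanish in the appropriate sense: the second equation requires $\star^{\I} c_{\I k} = 0$ for $k=1,2,3$, while the first, after applying the auxiliary relation \eqref{xc} to rewrite its obstruction as $\tfrac{1}{2} U_{\I\J}\hp d(\vec{x}^{\pt\I}\sprod\vec{x}^{\pt\J})$, requires this 1-form to be exact, which by \eqref{starc0} is equivalent to $\star^{\I} c_{\I 0} = 0$. Expanding $c_{\I a}$ in the basis of $dx^{\J}_k$'s on the base, the four equalities $\star^{\I}c_{\I a} = 0$, the first extended Bogomolny equation $\partial_{x^{\I}_i}U_{\K\J} = \partial_{x^{\J}_i}U_{\K\I}$, and the linear independence of the basic 2-forms $dx^{\I}_l \nhp\wedge\nhp dx^{\J}_m$ then combine to yield \eqref{HKC-conds} for every pair $\I, \J$.

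The chief technical hurdle is this last step: disentangling, from the joint vanishing of the 2-forms $\star^{\I}c_{\I a}$, the two separate conditions $\vec{L}\,U_{\I\J} = \vec{0}$ and $L_0 U_{\I\J} + U_{\I\J} = 0$. Because $\star^{\I}$ entangles the $\I$-contraction with an $\varepsilon$-rotation against the base differentials $d\vec{x}^{\pt\I}$, the rotation-weight and scale-weight contributions to $c_{\I a}$ appear intertwined, and isolating them into independent algebraic relations requires careful bookkeeping together with the Bogomolny symmetry. Once this is done, the proof closes; the potential $U = 2\, U_{\I\J}\, \vec{x}^{\pt\I}\sprod \vec{x}^{\pt\J}$ may be cross-checked directly via \eqref{kappa_gee} applied to the Euler vector field $2X_0$ against the metric \eqref{metric_GH}.
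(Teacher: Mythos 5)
Your proof follows the paper's own argument essentially step for step: sufficiency by observing that the constraints \eqref{HKC-conds} kill the 1-forms $c_{\I a}$, so that the obstruction identities \eqref{HKC-obstr} collapse to criterion $(e)$ of Proposition~\ref{HKC-criterions} (after the normalization rescaling you correctly flag) with potential \mbox{$U = 2\pt U_{\I\J}\pt\vec{x}^{\pt\I}\!\sprod\vec{x}^{\pt\J}$}; necessity by extracting \mbox{$\star^{\I}c_{\I 0}=0$} from exactness of the $\delta_{ij}$-coefficient via \eqref{starc0} and \mbox{$\star^{\I}c_{\I k}=0$} from the dilatation identity. Two remarks. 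First, your interpolated uniqueness argument for the lift does not work as stated: \emph{every} lift \mbox{$L_a + w_a^{\I}\partial_{\psi_{\I}}$} with base-dependent coefficients $w_a^{\I}$ commutes with the $\partial_{\psi_{\I}}$, so that requirement cannot single out \eqref{La-lift}. The paper does not attempt this either; it simply poses the lifting question for the specific vector fields \eqref{La-lift}, whose choice is motivated beforehand by the tensorial structure of \eqref{HKC-obstr}, so your proof of the proposition as intended is unaffected, but the extra paragraph should be deleted or repaired. Second, the ``chief technical hurdle'' you anticipate is lighter than you suggest and no real disentangling is needed: $c_{\I 0}$ involves \emph{only} $\vec{L}\pt U_{\I\J}$, so \mbox{$\star^{\I}c_{\I 0}=0$} yields the rotation constraint outright from the linear independence of the $dx^{\I}_j\wedge dx^{\J}_k$; feeding this back into $\vec{c}_{\I}$ leaves only the term \mbox{$-(L_0U_{\I\J}+U_{\I\J})\pt d\vec{x}^{\pt\J}$}, and \mbox{$\star^{\I}c_{\I k}=0$} then gives the scaling constraint immediately. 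This sequential order ($a=0$ first, then $a=k$) is exactly how the paper closes the argument.
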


\begin{proof}

By the criterion criterion \mbox{$(e)$} of Proposition~\ref{HKC-criterions}, the lifts $X_a$ generate a hyperk\"ahler cone structure on $N$ if and only if there exists a function $U$ such that
\begin{equation}
\iota_{X_i} \Omega_{\pt j} = \varepsilon_{ijk} \, \iota_{X_0} \Omega_{k} - \frac{1}{2} \pt \delta_{ij} \pt dU
\qquad \text{and} \qquad
\mathcal{L}_{X_0} \Omega_k = \Omega_k \mathrlap{.}
\end{equation}

When the constraints \eqref{HKC-conds} hold, the 1-forms $\vec{c}_{\I}$ vanish, and then the formulas \eqref{HKC-obstr} make it readily clear that this is indeed the case, with $U$ defined as above.  This proves the converse implication. 

For the direct implication, note that if the vector fields $X_a$ do generate a hyperk\"ahler cone structure then $1)$~the coefficient of $\delta_{ij}$ on the right-hand side of the first equation \eqref{HKC-obstr} must be exact, and therefore closed, and $2)$~the last term on the right-hand side of the second equation \eqref{HKC-obstr} must vanish. The first condition entails that $\smash{ \vec{x}^{\pt\I} \!\sprod \vec{c}_{\I} }$ must be closed, which then by the formula \eqref{starc0} requires that $\smash{ \star^{\I} c_{\I 0} }$ must vanish. In view of the definition of $c_{\I 0}$, this immediately implies the first constraint \eqref{HKC-conds}. Together with this, the second condition implies then right away the remaining constraint \eqref{HKC-conds}, which concludes the proof.
\end{proof}

When the constraints \eqref{HKC-conds} are satisfied, \mbox{$c_{\I a} = 0$}, and from the relation \eqref{LaF} we have
\begin{equation}
\mathcal{L}_{X_a}A_{\I} = \mathcal{L}_{L_a}A_{\I} = d(\iota_{L_a}A_{\I} + U_{\I\J}\pt x^{\J}_a \hp) \mathrlap{.}
\end{equation}
That is, under the Lie action of the generators of the hyperk\"ahler cone structure the \mbox{1-forms} $A_{\I}$ shift by exact terms. But as connection 1-forms these are defined only up to exact terms, so in principle the shifts can be absorbed into their definitions. Equivalently put, one can always choose representative elements of their gauge equivalence classes which satisfy the gauge-fixing conditions
\begin{equation} \label{gauge-fix}
\iota_{L_a}A_{\I} = - \pt U_{\I\J}\pt x^{\J}_a
\end{equation}
and are thus fully invariant under the action of the generators. Note that these conditions do not fix the representative elements completely but only up to exact differentials of rotation and scaling-invariant functions. In this gauge we have simply $\smash{ X_a = L_a }$, that is, the generators of the hyperk\"ahler cone structure coincide with the generators of the collective $\smash{ \mathbb{H}^{\times} \npt /\mathbb{Z}_2 }$-action on the base. From now on we will assume this to always be the case.

\subsubsection{}

Remarkably, the conditions of Proposition~\ref{GH-HKC} entail that the corresponding hyperk\"ahler metrics are determined by a single function, which is not so surprisingly the hyperk\"ahler potential. In particular, the differential conditions on the Higgs field can be exchanged in this case with a dual set of differential conditions on the hyperk\"ahler potential. 

\begin{proposition} \label{U_duality}
Consider the following two assumptions holding on a open subset $S$ of $\mathbb{R}^m \otimes \mathbb{R}^3$:  
\begin{enumerate}
\setlength\itemsep{0.2em}

\item[a)] There exists an \mbox{$m \vprod m$} symmetric matrix-valued function $U_{\I\J}$ on $S$ whose elements satisfy 
\begin{itemize}
\setlength\itemsep{0.2em}

\item[$\circ$] the first-order differential equations \mbox{$\vec{\partial}_{\I}U_{\K\J} = \vec{\partial}_{\npt\J}U_{\K\I}$}
\item[$\circ$] the symmetry constraints \mbox{$\vec{L} \, U_{\I\J} = \vec{\hp 0}$} and \mbox{$L_0 \pt U_{\I\J} = - \hp U_{\I\J}$}. 
\end{itemize}

\item[b)] There exists a function $U$ on $S$ satisfying 
\begin{itemize}
\setlength\itemsep{0.2em}
\item[$\circ$] the second-order differential equations \mbox{$\vec{\partial}_{\I} \vprod \vec{\partial}_{\J} \hp  U = \vec{\hp 0}$} 
\item[$\circ$] the symmetry constraints \mbox{$\vec{L} \pt U = \vec{\hp 0}$} and \mbox{$L_0 \hp U = U$}. 
\end{itemize}
\end{enumerate}
Then the two assumptions are equivalent, with the direct and reverse implication maps given respectively by 
\begin{equation} \label{HK-pot/Higgs}
U = 2 \pt U_{\I\J} \pt \vec{x}^{\pt\I} \!\sprod \vec{x}^{\pt\J}
\qquad \text{and} \qquad
U_{\I\J} = \frac{1}{4} \pt \vec{\partial}_{\I} \sprod \vec{\partial}_{\J} \hp U \mathrlap{.}
\end{equation}
\end{proposition}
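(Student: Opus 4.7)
The plan is to verify each implication by direct computation, with the first Bogomolny-type equation (and its iterated use) as the key algebraic tool in both directions.

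\textbf{(a)$\Rightarrow$(b).} With $U = 2\hp U_{\I\J}\hp \vec{x}^{\pt\I}\!\sprod \vec{x}^{\pt\J}$, the symmetry constraints on $U$ are immediate: since $\vec{x}^{\pt\I}\!\sprod \vec{x}^{\pt\J}$ is rotation-invariant and $L_0$-homogeneous of degree $2$, while $U_{\I\J}$ is rotation-invariant of $L_0$-degree $-1$, the product is rotation-invariant and of $L_0$-degree $1$, so $\vec{L}\,U=\vec{\hp 0}$ and $L_0 U = U$. The essential content is the verification that $\vec{\partial}_{\M}\vprod\vec{\partial}_{\N}\hp U = \vec{\hp 0}$, i.e.~that $\partial_{x^{\M}_j}\partial_{x^{\N}_k} U$ is symmetric under $j\leftrightarrow k$. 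Expanding by Leibniz splits $\partial_{x^{\M}_j}\partial_{x^{\N}_k} U$ into three pieces: a Kronecker piece $4\hp U_{\M\N}\hp\delta_{jk}$ (manifestly symmetric); a pair of ``cross'' pieces $4(\partial_{x^{\N}_k}U_{\M\J})\hp x^{\J}_j + 4(\partial_{x^{\M}_j}U_{\N\J})\hp x^{\J}_k$, which upon a single application of the first Bogomolny equation (together with $U_{\I\J}=U_{\J\I}$) collapse to the manifestly symmetric combination $4[(\partial_{x^{\J}_j}U_{\M\N})\hp x^{\J}_k + (\partial_{x^{\J}_k}U_{\M\N})\hp x^{\J}_j]$; and a pure second-derivative piece $2(\partial_{x^{\M}_j}\partial_{x^{\N}_k}U_{\I\J})\,\vec{x}^{\pt\I}\!\sprod \vec{x}^{\pt\J}$. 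For this last piece one applies the Bogomolny equation three times in sequence---on the index pairs $(\M,\J)$, then $(\N,\M)$, then $(\J,\N)$---to establish the identity $\partial_{x^{\M}_j}\partial_{x^{\N}_k}U_{\I\J} = \partial_{x^{\M}_k}\partial_{x^{\N}_j}U_{\I\J}$, after which contraction with the $(\I,\J)$-symmetric tensor $\vec{x}^{\pt\I}\!\sprod\vec{x}^{\pt\J}$ preserves the $(j,k)$-symmetry.

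\textbf{(b)$\Rightarrow$(a).} The symmetry $U_{\I\J}=U_{\J\I}$ is immediate from the commutativity of partial derivatives. Writing out the definition, the first Bogomolny equation $\partial_{x^{\I}_l}U_{\K\J}=\partial_{x^{\J}_l}U_{\K\I}$ reduces, after commuting the outer derivative $\partial_{x^{\K}_m}$ past the inner pair, to the Hessian identity $\partial_{x^{\I}_l}\partial_{x^{\J}_m}U = \partial_{x^{\J}_l}\partial_{x^{\I}_m}U$; this in turn follows from $\vec{\partial}_{\I}\vprod\vec{\partial}_{\J}\hp U = \vec{\hp 0}$ (which gives symmetry in the vector indices) combined with the commutativity of partials. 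The two symmetry constraints on $U_{\I\J}$ follow from elementary commutator identities: $[\vec{L},\vec{\partial}_{\I}\sprod\vec{\partial}_{\J}]=0$ because the scalar operator $\vec{\partial}_{\I}\sprod\vec{\partial}_{\J}$ is manifestly $SO(3)$-invariant, while $[L_0,\vec{\partial}_{\I}\sprod\vec{\partial}_{\J}]=-2\,\vec{\partial}_{\I}\sprod\vec{\partial}_{\J}$ because each Cartesian partial carries weight $-1$ under $L_0$; combined with $\vec{L}\,U = \vec{\hp 0}$ and $L_0 U = U$, these immediately yield $\vec{L}\,U_{\I\J}=\vec{\hp 0}$ and $L_0 U_{\I\J} = -U_{\I\J}$.

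The chief technical obstacle is the index bookkeeping in the pure second-derivative piece of the forward direction: the three-step Bogomolny chain used to establish $\partial_{x^{\M}_j}\partial_{x^{\N}_k}U_{\I\J} = \partial_{x^{\M}_k}\partial_{x^{\N}_j}U_{\I\J}$ is delicate, since each swap changes which pair of $\mathbb{R}^m$-indices carries the relevant derivative while one must also repeatedly commute partials in between. Everything else consists of one-line applications of Leibniz, the Euler identity, or the elementary commutator relations between $\vec{L}$, $L_0$, and Cartesian partials.
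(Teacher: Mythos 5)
Your proof is correct, and it takes a genuinely different route from the paper's. The paper never expands the Hessian of $U$ directly: in the forward direction it applies the operator identity $\vec{x}^{\pt\I} \vprod (\pt \vec{x}^{\pt\J} \vprod \vec{\partial}_{\K}) = \vec{x}^{\pt\J}(\pt \vec{x}^{\pt\I} \!\sprod \vec{\partial}_{\K}) - (\pt \vec{x}^{\pt\I} \!\sprod \vec{x}^{\pt\J}) \pt \vec{\partial}_{\K}$ to $U_{\I\J}$ and first derives the gradient relation $\vec{\partial}_{\I} U = 2\pt U_{\I\J}\pt\vec{x}^{\pt\J}$ of eq.~\eqref{del_U}, from which everything else (including the second formula in \eqref{HK-pot/Higgs}) is read off; in the reverse direction it uses the double-cross-product identity together with the Casimir-type identity $(\pt \vec{x}^{\pt\I} \npt\sprod \vec{x}^{\pt\J}) (\hp \vec{\partial}_{\I} \sprod \vec{\partial}_{\J}) + (\pt \vec{x}^{\pt\I} \npt\vprod\hp \vec{x}^{\pt\J}) \sprod (\hp \vec{\partial}_{\I} \vprod \vec{\partial}_{\J}) = \vec{L}^{\hp 2} + L_0^2 + L_0$. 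Your replacement of all this by a Leibniz expansion into the Kronecker, cross and pure second-derivative pieces, with the single and triple Bogomolny swaps, checks out (I verified the chain $\partial_{x^{\M}_j}\partial_{x^{\N}_k}U_{\I\J} = \partial_{x^{\M}_k}\partial_{x^{\N}_j}U_{\I\J}$ and the collapse of the cross terms), as do the commutator arguments in the reverse direction; it is more elementary and arguably more transparent. What the paper's route buys and yours does not: first, the by-product $\vec{\partial}_{\I} U = 2\pt U_{\I\J}\pt\vec{x}^{\pt\J}$ is itself needed later in the paper (it reduces to eq.~\eqref{del_curl-U} and underlies the expression \eqref{theta_0} for $\theta_0$); second, the paper's argument shows that the two maps in \eqref{HK-pot/Higgs} are mutually inverse -- starting from $U_{\I\J}$ and forming $U$ one recovers $U_{\I\J} = \frac{1}{4}\vec{\partial}_{\I}\sprod\vec{\partial}_{\J}U$, and conversely -- which is implicit in calling them the ``direct and reverse implication maps'' and is tacitly used downstream, where both formulas are applied simultaneously to the same pair. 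Your computation establishes the logical equivalence of $(a)$ and $(b)$ but is silent on this compatibility; it does follow from your setup (trace your Leibniz expansion over $j=k$ and use $L_0 U_{\M\N} = -U_{\M\N}$ together with one more Bogomolny swap to kill the pure second-derivative piece), but you should say so explicitly.
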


\begin{proof}

The proof is an exercise in three-dimensional differential vector calculus. Consider first the implication \mbox{$(a) \Rightarrow (b)$}. Acting with the operatorial identity
\begin{equation}
\vec{x}^{\pt\I} \vprod (\pt \vec{x}^{\pt\J} \vprod \vec{\partial}_{\K}) = \vec{x}^{\pt\J}(\pt \vec{x}^{\pt\I} \!\sprod \vec{\partial}_{\K}) - (\pt \vec{x}^{\pt\I} \!\sprod \vec{x}^{\pt\J}) \pt \vec{\partial}_{\K}
\end{equation}
on $U_{\I\J}$ and summing over the indices {\small $I,J$}, using the first-order differential equations to permute indices and then the symmetry constraints to simplify or eliminate terms, we obtain that
\begin{equation} \label{del_U}
\vec{\partial}_{\I} U = 2\pt U_{\I\J} \pt \vec{x}^{\pt\J}
\end{equation}
with the first equation \eqref{HK-pot/Higgs} taken as the definition of $U$. This relation implies immediately the symmetry constraints on $U$, and by a more circuitous route involving similar manipulations as above, the second equation \eqref{HK-pot/Higgs} and the second-order differential constraints on~$U$.

The reverse implication \mbox{$(b) \Rightarrow (a)$} follows rather more straightforwardly by acting on the function $U$ with the operatorial identities
\begin{align}
& \vec{\partial}_{\K} \vprod (\hp \vec{\partial}_{\I} \vprod \vec{\partial}_{\J}) = \vec{\partial}_{\I} \pt (\hp \vec{\partial}_{\K} \sprod \vec{\partial}_{\J}) - \vec{\partial}_{\J} \pt (\hp \vec{\partial}_{\K} \sprod \vec{\partial}_{\I}) \\[0pt]
& (\pt \vec{x}^{\pt\I} \npt\sprod \vec{x}^{\pt\J}) (\hp \vec{\partial}_{\I} \sprod \vec{\partial}_{\J}) + (\pt \vec{x}^{\pt\I} \npt\vprod\hp \vec{x}^{\pt\J}) \sprod (\hp \vec{\partial}_{\I} \vprod \vec{\partial}_{\J}) = \vec{L}^{\hp 2} + L_0^2 + L_0 \nonumber
\end{align}
and taking the second equation \eqref{HK-pot/Higgs} as the definition of $U_{\I\J}$. These yield respectively the first-order differential equation for $U_{\I\J}$ and the first equation \eqref{HK-pot/Higgs}. The remaining symmetry constraints on $U_{\I\J}$ are fairly easy then to verify directly. 
\end{proof}

\subsubsection{}

Theorem~\ref{HKC=Sw} tells us that, assuming that the $\smash{ \mathbb{H}^{\times} \npt/\mathbb{Z}_2 }$-action is locally free, such a space $N$ as defined by the conditions of Proposition~\ref{GH-HKC} is, modulo a possible sign redefinition of the metric, at least locally homothetic to a Swann bundle. Our main objective in what follows is to determine explicitly, in intrinsic terms, the quaternionic K\"ahler metric on the base of this Swann bundle, including the differential constraints on its building blocks induced by the Bogomolny equations. Notice that the base has quaternionic dimension one less than the quaternionic dimension $m$ of $N$, and that, on another hand, by Theorem~\ref{Sw-symm} the \mbox{$\mathbb{R}^m$-action} on $N$ descends to it with no alteration in rank. Thus, if we take \mbox{$m = n+1 \geq 2$}, then the quaternionic K\"ahler metric we end up with will be a $4n$-dimensional one with a locally free isometric \mbox{$\mathbb{R}^{n+1}$-action}. 

It is natural to assume in these circumstances that the subset $S$ of $\smash{ \mathbb{R}^m \npt\otimes \mathbb{R}^3 }$ is closed under the action generated by $\smash{ \vec{L} }$ and $L_0$. On $N$, the corresponding induced action can be represented by means of quaternions as follows:
\begin{equation} \label{HKC:GH-q}
\begin{tikzcd}
(\hp \psi_{\I},\vec{x}^{\pt\I}) \arrow[r, "q \pt \in \pt  \mathbb{H}^{\times}\! ", mapsto] & (\hp \psi_{\I}, \bar{q}\pt\hp\vec{x}^{\pt\I}q \hp) \mathrlap{.}
\end{tikzcd}
\end{equation}
The invariance of the $\psi_{\I}$ coordinates is predicated on the gauge-fixing conditions \eqref{gauge-fix} being enforced. In accordance with the above choice for the dimension $m$ the index {\small $I$} will be assumed to run from 0 to $n$. 

Simply formulated, our strategy will be to match the symmetry-centered extended Gibbons--Hawking description of $N$ against the quaternionic structure-centered Swann bundle one, and to extract in the process the quaternionic K\"ahler metric. For this, these transformation rules will be key. A nice geometrical interpretation is afforded if we introduce a certain auxiliary space $\smash{ \textrm{Im}\mathbb{HP}^{\pt n} }$ which will play for the quaternionic K\"ahler space a role similar to the one that $\smash{ \mathbb{R}^m \!\otimes \mathbb{R}^3 }$ plays in the extended Gibbons--Hawking hyperk\"ahler case. This and some other useful related concepts will be defined first in the next subsection, before we embark in a more head-on derivation of what in effect will be a quaternionic K\"ahler analogue of the extended Gibbons--Hawking Ansatz.

\subsection{The space {\normalfont $\textup{Im}\mathbb{HP}^{\pt n}$}} \label{ssec:ImHP^n}

\subsubsection{} \label{ssec:in/hom}

As we have remarked before, a point in $\smash{ \mathbb{R}^{n+1} \!\otimes \mathbb{R}^3 }$ can be viewed as a configuration of \mbox{$n+1$} distinguishable points in $\mathbb{R}^3 $ with position vectors $\smash{ \vec{x}^{\pt\I} }$. On this space one can define a natural action of the group $\smash{ \mathbb{H}^{\times} \npt /\mathbb{Z}_2 \cong \pt \mathbb{R}^+ \npt \otimes SO(3) }$ generated by the vector fields $L_0$ and $\smash{ \vec{L} }$ defined above, which acts on configurations either by simultaneously rescaling all their position vectors by a common factor or by rigidly rotating them around the origin of $\mathbb{R}^3$. Quaternions allow us to express these two actions together concisely as
\begin{equation} \label{H*-GH}
\begin{tikzcd}
\vec{x}^{\pt\I} \arrow[r, "q \pt \in \pt  \mathbb{H}^{\times}\! ", mapsto] &  \bar{q} \pt\hp \vec{x}^{\pt\I} q \mathrlap{.}
\end{tikzcd}
\end{equation}

We shall call a configuration \textit{degenerate} if its stabilizer under the $\smash{ \mathbb{H}^{\times}\npt / \mathbb{Z}_2 }$-action is non-trivial.  The set $\smash{ C_{\text{deg}} }$ of degenerate configurations comprises the null configuration (that is, the configuration will all vectors vanishing), which is a fixed point for the action and is thus stabilized by the whole group, and all the configurations for which all non-vanishing position vectors are collinear (we include in this category also the configurations with a single non-vanishing position vector), which are stabilized by an $S^1$ subgroup of the $SO(3)$ subgroup of $\smash{ \mathbb{H}^{\times}\npt / \mathbb{Z}_2 }$. On the set of non-degenerate configurations, the complement of $\smash{ C_{\text{deg}} }$ in $\smash{ \mathbb{R}^{n+1} \!\otimes \mathbb{R}^3 }$, the $\smash{ \mathbb{H}^{\times}\npt / \mathbb{Z}_2 }$-action acts freely. In analogy with the classical projective spaces we define the quotient space
\begin{equation}
\textrm{Im}\mathbb{HP}^{\pt n} = (\hp \mathbb{R}^{n+1} \!\otimes \mathbb{R}^3 \smallsetminus C_{\text{deg}} ) / (\mathbb{H}^{\times} \npt /\mathbb{Z}_2) \mathrlap{.}
\end{equation}
This has $\smash{ \dim \textrm{Im}\mathbb{HP}^{\pt n} = 3(n+1)-4 = 3n-1 }$. 

As in the case of projective spaces, on it we can define open atlases consisting of a finite number of \textit{inhomogeneous coordinate} charts. In practice, to build such an atlas one needs to cut a number of open slices across the $\smash{ \mathbb{H}^{\times} \npt/\mathbb{Z}_2 }$-orbits in the space of non-degenerate configurations in such a way that their projections onto $\smash{ \textrm{Im}\mathbb{HP}^{\pt n} }$ form an open covering. Each slice is in essence a uniform prescription giving us a representative configuration for every orbit intersecting it. 

Concretely, the data required to define an inhomogeneous coordinate atlas on $\smash{ \textrm{Im}\mathbb{HP}^{\pt n} }$ consists of a choice of half-plane in $\mathbb{R}^3$ with boundary line passing through the origin, together with a choice of orientation. Note that the condition that a configuration be non-degenerate can be equivalently stated as the requirement that the configuration have at least one pair of non-vanishing and non-collinear position vectors. To define a slice, choose two position vectors $\vec{x}^{\pt\I_1}$, $\vec{x}^{\pt\I_2}$ with \mbox{$\text{{\small $I_1$}} < \text{{\small $I_2$}}$}, and consider the subset $\smash{ C_{\I_1\I_2} \subset \mathbb{R}^{n+1} \!\otimes \mathbb{R}^3 \smallsetminus C_{\text{deg}} }$ of all the non-degenerate configurations for which these two position vectors are non-vanishing and non-collinear. Observe that this is closed under the $\smash{ \mathbb{H}^{\times} \npt/\mathbb{Z}_2 }$-action. Any configuration from $\smash{ C_{\I_1\I_2} }$ can be transformed through a rigid rotation around the origin of $\mathbb{R}^3$ into a configuration with the first position vector lying on the boundary line and the second position vector lying in the interior of the chosen half-plane. A little consideration reveals that there are actually two ways in which this can be achieved, related by a $180^{\circ}$ rotation, and we select between them by requiring that the cross product of the two vectors\,---\,taken in the given order\,---\,be parallel, as opposed to anti-parallel, to the normal vector to the half-plane (hence the need for orientation data). Furthermore, through a simultaneous rescaling, the length of the first vector can always be adjusted to have a given fixed value. 

\begin{example}[part I]
If we fix in $\mathbb{R}^3$ the half-plane bounded by the $\mathbf{i}$\pt-axis and containing the vector $\mathbf{j}$, with normal vector \mbox{$\mathbf{k}$}, and single out the pair of vectors $\smash{ \vec{x}^{\pt 0} }$, $\smash{ \vec{x}^{\hp 1} }$, then any configuration with these two vectors non-vanishing and non-collinear\,---\,that is, in our notation, from $C_{01}$\,---\,can be transformed as above into a configuration of the form
\begin{alignat}{2} \label{res-config}
\vec{\rho}^{\,\hp 0} \nhp & = && \mathbf{i} \\[-1pt]
\vec{\rho}^{\,1} & = \rho^{1}_1\pt && \mathbf{i} + \rho^{1}_2 \,\mathbf{j} \nonumber \\[-1pt]
\vec{\rho}^{\,\I} & = \rho^{\I}_1 && \mathbf{i} + \rho^{\I}_2 \,\mathbf{j} + \rho^{\I}_3 \pt\mathbf{k} \mathrlap{\text{\ \ for $\text{{\small $I$}} = 2, \dots, n$}} \nonumber
\end{alignat}
with \mbox{$\rho^{1}_2 > 0$}. Clearly, the vector product $\smash{ \vec{\rho}^{\,\hp 0} \nhp\vprod \vec{\rho}^{\,1} = \rho^{1}_2 \pt \mathbf{k} }$ points in the preferred normal direction.
\end{example}

\noindent The restricted configurations constructed in this way have, in kinematic terms, four \textit{frozen degrees of freedom} and, crucially, are in one-to-one correspondence with the $\smash{ \mathbb{H}^{\times} \npt/\mathbb{Z}_2 }$ orbits contained in $\smash{ C_{\I_1\I_2} }$, for which they can be thus considered to be representative configurations. Together they define an open slice transversal to these orbits, and their position vectors, which depend on \mbox{$3(n+1)-4 = 3n-1$} parameters, provide a local inhomogeneous coordinate chart on the projection of $\smash{ C_{\I_1\I_2} }$ to $\smash{ \textrm{Im}\mathbb{HP}^{\pt n} }$. 

The position vectors $\vec{x}^{\pt\I}$ of any configuration from $\smash{ C_{\I_1\I_2} }$ can be uniquely represented as
\begin{equation} \label{x_dec}
\vec{x}^{\pt\I} = \bar{q} \pt\hp\vec{\rho}^{\,\I} q 
\end{equation}
for some $\smash{ q \in \mathbb{H}^{\times} \npt/\mathbb{Z}_2 }$ and  position vectors $\smash{ \vec{\rho}^{\,\I} }$ of a restricted configuration. Following the projective space analogy one may  think of $\smash{ \vec{x}^{\pt\I} }$ and $\smash{ \vec{\rho}^{\,\I} }$ as global homogeneous respectively local inhomogeneous coordinates on $\smash{ \textrm{Im}\mathbb{HP}^{\pt n} }$. 

\begin{example}[part II]
In the example above one can show that, in agreement with uniqueness, this relation can indeed be inverted to yield
\begin{equation}
\vec{\rho}^{\,\I} = \frac{\vec{x}^{\pt 0} \nhp\sprod \vec{x}^{\pt\I}}{|\vec{x}^{\pt 0}|^2} \pt \mathbf{i}
+ \frac{(\vec{x}^{\pt 0} \nhp\vprod \vec{x}^{\hp 1})\sprod(\vec{x}^{\pt 0} \nhp\vprod \vec{x}^{\pt\I})}{|\vec{x}^{\pt 0} \nhp\vprod \vec{x}^{\hp 1}| \pt |\vec{x}^{\pt 0}|^2} \pt \mathbf{j}
+ \frac{\vec{x}^{\pt 0} \npt\sprod [(\vec{x}^{\pt 0} \nhp\vprod \vec{x}^{\hp 1}) \vprod (\vec{x}^{\pt 0} \nhp\vprod \vec{x}^{\pt\I})]}{|\vec{x}^{\pt 0} \nhp\vprod \vec{x}^{\hp 1}| \pt |\vec{x}^{\pt 0}|^3} \pt \mathbf{k}
 \mathrlap{.}
\end{equation}
The components are manifestly invariant under rigid rotations and simultaneous scalings, which reinforces the argument that they are, when they are not constant, natural candidates for the role of coordinates on (the projection of $C_{01}$ to) the quotient space $\smash{ \textrm{Im}\mathbb{HP}^{\pt n} }$. 
\end{example}

Finally, by considering all the possible choices of two position vectors we can construct in this way, using each time the same oriented half-plane, an entire open atlas on $\smash{ \textrm{Im}\mathbb{HP}^{\pt n} }$ with $n(n+1)/2$ inhomogeneous coordinate charts.

\subsubsection{}  \label{ssec:flt_ind_conn}

The extended Bogomolny field equations \eqref{ext_Bogo_eqs} are formulated in terms of the derivatives $\smash{ \vec{\partial}_{\I} }$, which, for our next purpose, it is useful to think of as defining a flat, torsion-free affine connection on the tangent bundle of $\smash{ \mathbb{R}^{n+1} \!\otimes \mathbb{R}^3 }$. There exists a notion of Lie derivative of an affine connection with respect to a vector field, see \textit{e.g.}~the definition $(1.16)$ in \cite{Ballmann}; in particular, a vector field whose Lie derivative preserves the connection is called an \textit{affine vector field}. In our case one can show, for instance by applying the criterion of Proposition~1.18 in \textit{op.\,cit.}, that the generating vector fields $L_a$ of the $\smash{ \mathbb{H}^{\times} \npt/\mathbb{Z}_2 }$-action on $\smash{ \mathbb{R}^{n+1} \!\otimes \mathbb{R}^3 }$ are affine vector fields with respect to this trivial affine connection. As we shall see later on, Bogomolny equations associated to hyperk\"ahler cones can be \textit{reduced} from an open subset of $\smash{ \mathbb{R}^{n+1} \!\otimes \mathbb{R}^3 }$ to one of  $\smash{ \textrm{Im}\mathbb{HP}^{\pt n} }$, which in practice is achieved, in the terminology that we have introduced earlier, by expressing them in inhomogeneous coordinates. The formulation of the reduced equations involves sections of certain vector bundles associated to the natural principal $\smash{ \mathbb{H}^{\times} \npt/\mathbb{Z}_2 }$-bundle over $\smash{ \textrm{Im}\mathbb{HP}^{\pt n} }$ and covariant derivatives of these sections induced by this flat, torsion-free affine connection on $\smash{ \mathbb{R}^{n+1} \!\otimes \mathbb{R}^3 }$. In the remainder of this subsection we want to show through a few examples what properties such sections have, and how to construct these induced covariant derivatives explicitly for a generic choice of inhomogeneous coordinates.

Consider the two coordinate systems that we have defined on the space $\smash{ \mathbb{R}^{n+1} \!\otimes \mathbb{R}^3 \smallsetminus C_{\text{deg}} }$ in the course of \mbox{\S\,\ref{ssec:in/hom}}: the  homogeneous global coordinates $\{\vec{x}^{\pt\I}\}$, and the $\smash{ \mathbb{H}^{\times} \npt / \mathbb{Z}_2 }$-adapted local coordinates \mbox{$\{\hp \vec{\rho}^{\pt\I} \npt, q \}$}. Recalling the differential identity \eqref{dqrbq}, the relation \eqref{x_dec} between them yields
{\allowdisplaybreaks
\begin{align} 
d\vec{x}^{\pt\I} & = \bar{q} \hp (\hp d\vec{\rho}^{\,\I} - 2 \pt \sigma^{R}_0 \vec{\rho}^{\,\I} + 2 \pt \vec{\sigma}^{\hp R}_{\phantom{0}} \!\times \npt \vec{\rho}^{\,\I} ) q \mathrlap{.} \label{dx_dec} \\
\intertext{Denoting \mbox{$ \mathfrak{L}_{a,}{}_{\mathrlap{j}}{}^{\raisebox{0.3pt}{$\I$}}  = \langle \hp u_au_j,\rho^{\I} \rangle $}, this can be equivalently re-expressed as}
dx_{\mathrlap{i}}{}^{\I} & = |q|^2 R_{ij}(q^{-1}) (\pt d\rho_{\mathrlap{j}}{}^{\I} - 2 \pt \sigma_a^R \pt \mathfrak{L}_{a,}{}_{\mathrlap{j}}{}^{\I} \pt ) \mathrlap{.} \\
\intertext{Dually, we have}
\partial_{x^{\I}_i} & = |q|^{-2} R_{ij}(q^{-1}) (\pt \mathfrak{D}_{\I j} - \frac{1}{2} \pt \mathfrak{A}_{\I j, a} \pt \ell_{\mathrlap{a}}{}^R \pt) \label{del_x-HV}
\end{align}
}%
where the local vector fields $\smash{ \mathfrak{D}_{\I i} }$ and quaternionic-valued coefficients $\smash{ \mathfrak{A}_{\I i} }$ depend exclusively on the homogeneous coordinates $\smash{ \vec{\rho}^{\pt\I} }$ and \textit{are completely fixed by their choice}. Concretely, by matching for instance the expressions of the exterior derivative on $\smash{ \mathbb{R}^{n+1} \!\otimes \mathbb{R}^3 \smallsetminus C_{\text{deg}} }$ in the two coordinate systems we obtain the following set of conditions which these must satisfy, and which suffice to determine them:
\begin{alignat}{2} \label{A-L}
& d\rho_{\hp \mathrlap{i}}{}^{\I} \pt \mathfrak{A}_{\I i, a} = 0 &\qquad\qquad
& \mathfrak{L}_{a,}{}_{\mathrlap{i}}{}^{\I} \pt \mathfrak{A}_{\I i,b} = \delta_{ab} \\
& d\rho_{\hp \mathrlap{i}}{}^{\I} \pt \mathfrak{D}_{\I i} = d_{\hp \textrm{Im}\mathbb{HP}^{\pt n}} &
& \mathfrak{L}_{a,}{}_{\mathrlap{i}}{}^{\I} \pt \mathfrak{D}_{\I i} = 0 \mathrlap{.} \nonumber
\end{alignat}
Summation over repeated indices is as usual understood.

\begin{example}[part III]
Let us see explicitly how this works for the  particular choice of inhomogeneous coordinates considered above. The coefficients $\smash{ \mathfrak{A}_{\I i, a} }$, on one hand, are determined by the two conditions on the first line: the first condition implies that $\smash{ \mathfrak{A}_{\I i, a}  = 0 }$ for all pairs of indices $\text{{\small $I$}},i$ such that $\smash{ \rho^{\I}_i \neq \text{constant} }$; this leaves us with \mbox{$4 \times 4 = 16$} unknown components, which are then fixed by the 16 linear equations of the second condition. Thus, for the choice \eqref{res-config} of inhomogeneous coordinates the solution for these is
{\allowdisplaybreaks
\begin{align} \label{A-coeffs}
\mathfrak{A}_{01} = 1 
\qquad
\mathfrak{A}_{02} = - \hp \mathbf{k}
\qquad
\mathfrak{A}_{03} & = \frac{\rho^1_1}{\rho^1_2} \pt \mathbf{i} + \mathbf{j} \\[-1pt]
\mathfrak{A}_{13} & = - \hp \frac{1}{\rho^1_2} \pt \mathbf{i} \mathrlap{.} \nonumber
\end{align}
}%
The second line of constraints, on the other hand, can be solved in terms of the first one to yield 
\begin{equation}
\mathfrak{D}_{\I i} = \partial_{\rho^{\I}_i} - \mathfrak{A}_{\I i, a} \pt \mathfrak{L}_{a,}{}_{\mathrlap{j}}{}^{\J} \hp \partial_{\rho^{\J}_j}
\end{equation}
where we assume the convention that only the terms for which the derivatives $\smash{ \partial_{\rho^{\I}_i} }$ are well defined are considered.
\end{example}

\subsubsection{}

Note now that in the \mbox{$\{\hp \vec{\rho}^{\pt\I}\npt, q \}$} coordinate system the generators of the $\smash{ \mathbb{H}^{\times} \npt / \mathbb{Z}_2 }$-action act strictly on the $q$-coordinates and take the form
\begin{equation} \label{L=lR}
L_a = \frac{1}{2} \pt \ell_{\mathrlap{a}}{}^L \mathrlap{.}
\end{equation}

\subsubsection{}

In order to formulate our results later on we will need to consider two basic types of vector bundles over $\smash{ \textrm{Im}\mathbb{HP}^{\pt n} }$ or subsets thereof{\pt}: on one hand, an $\mathbb{R}^{n+1}$-bundle, and on the other, vector bundles associated to the natural principal $\mathbb{H}^{\times}\npt / \mathbb{Z}_2$-bundle.
\begin{equation}
\begin{tikzcd}[column sep = 2ex]
M \arrow[dr, "\mathbb{R}^{n+1}" ' near start] & & E\mathrlap{{}^{\hp(\mathfrak{rep})}} \arrow[dl, "\mathbb{H}^{\times} \npt / \mathbb{Z}_2" near start] \\
& \textup{Im}\mathbb{HP}\mathrlap{{}^{\pt n}}  &
\end{tikzcd}
\end{equation}
The latter bundles can be classified in accordance with the finite-dimensional representations of their structure group $\smash{ \mathbb{H}^{\times} \npt /\mathbb{Z}_2 } $. Since this is isomorphic to $\smash{ \mathbb{R}^+ \npt \otimes SO(3) }$, specifying its representations means specifying the representations of its two simple factors. The representations of $\smash{ \mathbb{R}^+ \npt }$, the multiplicative group of strictly positive real numbers, are characterized by their scaling weight $w$, an integer number. On the other hand, the only representations of the group $SO(3)$ that we will be concerned with here will be the trivial and the vector representations, which we denote with reference to their dimensions by $\mathbf{1}$ and $\mathbf{3}$ respectively.

\subsubsection{} \label{ssec:nabla_Ii}

To develop a sense of the properties that sections of such bundles have, it is useful to consider two basic examples corresponding to representations with one factor trivial and one not. 

Thus, suppose we have a section $\mathscr{F}$ of the bundle $E^{\hp(w,\mathbf{1})}$ locally defined on the open set of $\smash{ \textrm{Im}\mathbb{HP}^{\pt n} }$ coordinatized by the inhomogeneous coordinates $\smash{ \vec{\rho}^{\pt\I} }$. We can lift this section to a function
\begin{equation}
F = |q|^{2w} \mathscr{F}
\end{equation}
on (an open subset of) $\smash{ \mathbb{R}^{n+1} \!\otimes \mathbb{R}^3 \smallsetminus C_{\text{deg}} }$, and then using the expression \eqref{L=lR} for the $\smash{ \mathbb{H}^{\times} \npt /\mathbb{Z}_2 } $ generators verify that this satisfies
\begin{equation} \label{sym_F}
\vec{L}F = \vec{\hp 0}
\qquad\qquad
L_0 F =  w F \mathrlap{.}
\end{equation}
That is, sections of $E^{\hp(w,\mathbf{1})}$ correspond to functions on $\smash{ \mathbb{R}^{n+1} \!\otimes \mathbb{R}^3 \smallsetminus C_{\text{deg}} }$ invariant at rigid rotations and homogeneous of degree $w$ in the homogeneous coordinates. Conversely, such functions always descend to sections of $E^{\hp(w,\mathbf{1})}$. On another hand, by resorting to the relation \eqref{del_x-HV} we get that
\begin{equation} \label{delF}
\partial_{x^{\I}_i} F = |q|^{2w-2} R_{ij}(q^{-1}) \hp \nabla_{\I j} \mathscr{F} 
\end{equation}
where by definition
\begin{equation}
\nabla_{\I i} \mathscr{F} = \mathfrak{D}_{\I i} \mathscr{F} + w \pt\hp \mathfrak{A}_{\I i, 0} \pt \mathscr{F} \mathrlap{.}
\end{equation}
This is the connection induced on the bundle $\smash{ E^{\hp(w,\mathbf{1})} }$ over $\smash{ \textrm{Im}\mathbb{HP}^{\pt n} }$ by the flat connection on $\smash{ \mathbb{R}^{n+1} \!\otimes \mathbb{R}^3 }$. It maps sections of $\smash{ E^{\hp(w,\mathbf{1})} }$ to sections of $\smash{ M \otimes E^{\hp(w-1,\mathbf{3})}  }$. A simple counting argument shows that its components cannot be all be independent since there are four more of them than the dimension of  $\textrm{Im}\mathbb{HP}^{\pt n}$. One has to have therefore four constraints, which we shall call \textit{section type constraints}. This follows indeed from the two conditions on the right in \eqref{A-L}. However, an easier way to arrive at them is to use equivariance to strip off the $q$-dependence from the symmetry properties \eqref{sym_F}, a process we will refer to as \textit{reduction}. We obtain in this way
\begin{equation}
- \pt \vec{\rho}^{\,\I} \npt\vprod \vec{\nabla}_{\I} \mathscr{F} = \vec{\hp 0}
\qquad\qquad
\vec{\rho}^{\,\I} \npt\sprod \vec{\nabla}_{\I} \mathscr{F} = w \hp \mathscr{F} \mathrlap{.}
\end{equation}

Suppose now that we have instead a section $\mathscr{F}_i$ of the bundle $\smash{ E^{\hp(0,\mathbf{3})} }$, again, locally defined on the open set of $\smash{ \textrm{Im}\mathbb{HP}^{\pt n} }$ coordinatized by the inhomogeneous coordinates $\smash{ \vec{\rho}^{\pt\I} }$. We lift this to a triplet of functions
\begin{equation}
F_i = R_{ij}(q^{-1}) \mathscr{F}_j
\end{equation}
or in quaternionic notation, \mbox{$\vec{F} = q^{-1} \nhp \vec{\mathscr{F}} q$}, on a corresponding subset of $\smash{ \mathbb{R}^{n+1} \!\otimes \mathbb{R}^3 \smallsetminus C_{\text{deg}} }$. Proceeding in a similar manner as above one can check that these satisfy
\begin{equation} \label{sym_Fi}
L_i F_j = \varepsilon_{ijk} \hp F_k
\qquad\qquad
L_0 F_j = 0 \mathrlap{.}
\end{equation}
Conversely, functions on $\smash{ \mathbb{R}^{n+1} \!\otimes \mathbb{R}^3 \smallsetminus C_{\text{deg}} }$ with these transformation properties under rigid rotations and simultaneous rescalings give rise to sections of $\smash{ E^{\hp(0,\mathbf{3})} }$ over $\smash{ \textrm{Im}\mathbb{HP}^{\pt n} }$. Using the derivative formula \eqref{del_x-HV} we then get
\begin{equation}
\partial_{x^{\I}_i} F_j = |q|^{-2} R_{ik}(q^{-1}) R_{jl}(q^{-1}) \hp \nabla_{\I k} \mathscr{F}_l
\end{equation}
with
\begin{equation}
\nabla_{\I i} \mathscr{F}_j = \mathfrak{D}_{\I i} \mathscr{F}_j - \varepsilon_{jkl} \pt \mathfrak{A}_{\I i,k} \pt \mathscr{F}_l  \mathrlap{.}
\end{equation}
This connection, mapping sections of $\smash{ E^{\hp(0,\mathbf{3})} }$ to sections of $\smash{ M \otimes E^{\hp(-1,\mathbf{3} \hp\otimes\hp \mathbf{3})} }$, satisfies as well section type constraints, which can be argued as before to be given by the reduction of the symmetry properties \eqref{sym_Fi} and thus read
\begin{equation}
- \hp (\pt \vec{\rho}^{\,\I} \npt\vprod \vec{\nabla}_{\I})_{\hp i} \mathscr{F}_j = \varepsilon_{ijk} \pt \mathscr{F}_k 
\qquad\qquad
\vec{\rho}^{\,\I} \npt\sprod \vec{\nabla}_{\I} \mathscr{F}_j = 0 \mathrlap{.}
\end{equation}

This discussion extends naturally to sections of more general vector bundles over $\smash{ \textrm{Im}\mathbb{HP}^{\pt n} }$ constructed out of these two basic types in accordance with the rules of the tensor product. On each of these bundles the trivial connection on $\smash{ \mathbb{R}^{n+1} \!\otimes \mathbb{R}^3 }$ induces a connection $\smash{ \vec{\nabla}_{\I} }$, which takes its sections to sections of the bundle obtained by tensoring with $\smash{ M \otimes E^{\hp(-1,\mathbf{3})} }$. Having at face count four more components than the dimension of $\smash{ \textrm{Im}\mathbb{HP}^{\pt n} }$, the $\smash{ \vec{\nabla}_{\I} }$ form an overcomplete set: while the two conditions on the left in \eqref{A-L} ensure that they always satisfy the completeness relation $\smash{ d\vec{\rho}^{\,\I} \npt\sprod \vec{\nabla}_{\I} = d_{\hp \textrm{Im}\mathbb{HP}^{\pt n}} }$, the ones on the right duly imply the requisite number of what we have called section type constraints, whose precise form depends on the bundle in question. Moreover, it is easy to see that the flatness of the inducing connection implies that the commutators $\smash{ [\nabla_{\I i}, \nabla_{\J j}] }$ vanish. Note also that the inhomogeneous coordinates $\rho^{\I}_i$ on $\smash{ \textrm{Im}\mathbb{HP}^{\pt n} }$ themselves can be regarded as local sections of the bundle $\smash{ M^* \npt \otimes E^{\hp(1,\mathbf{3})} }$, their natural lifts to $\smash{ \mathbb{R}^{n+1} \!\otimes \mathbb{R}^3 \smallsetminus C_{\text{deg}} }$ being the homogeneous coordinates $\smash{ x_{\mathrlap{i}}{}^{\raisebox{0.3pt}{$\I$}} = |q|^2 R_{ij}(q^{-1}) \hp \rho_{\mathrlap{j}}{}^{\raisebox{0.3pt}{$\I$}} }$, and we have $\smash{ \nabla_{\I i}\rho_{\mathrlap{j}}{}^{\raisebox{0.3pt}{$\J$}} = \delta_{ij} \pt \delta^{\J}_{\I} }$. An upshot of these considerations is that the covariant calculus on $\smash{ \textup{Im}\mathbb{HP}^{\pt n} }$ emulates formally the regular differential vector calculus on $\smash{ \mathbb{R}^{n+1} \!\otimes \mathbb{R}^3 }$, with the added feature that one can now make use of section type constraints.

\subsection{The reduction} \label{ssec:Reduction}

\subsubsection{}


This technical detour complete, let us now return to pick up and continue the discussion that we left off at the end of subsection~\ref{sec:GH-HKC}. \pagebreak 

The subset S of $\smash{ \mathbb{R}^{n+1} \!\otimes \mathbb{R}^3 }$, which we are assuming to be closed under the collective $\smash{ \mathbb{H}^{\times} \npt / \mathbb{Z}_2 }$-ac\-tion on the latter space, can also be assumed to be in fact a subset of $\smash{ \mathbb{R}^{n+1} \!\otimes \mathbb{R}^3 \smallsetminus C_{\text{deg}} }$, in concordance with the requirement that the corresponding action on $N$ be locally free. Let $\mathscr{S}$ be the projection of $S$ to $\smash{ \textrm{Im}\mathbb{HP}^{\pt n} }$ through the quotient map. The structure of the action \eqref{HKC:GH-q} implies then that the \mbox{$\mathbb{R}^{n+1}$-bundle} $N$ over $S$ descends to an \mbox{$\mathbb{R}^{n+1}$-bundle} $M$ over $\mathscr{S}$, the total space of which is locally isomorphic to the base of $N$, viewed now as a Swann bundle. This is expressed by the commutative diagram
\begin{equation*}
\hspace{-7em}
\begin{tikzcd}
N \rar["\mathbb{R}^{n+1}"] \dar["\mathbb{H}^{\times}\npt/\mathbb{Z}_2" '] & S \mathrlap{\ \subset \mathbb{R}^{n+1} \!\otimes \mathbb{R}^3 \smallsetminus C_{\textup{deg}}}  \dar["\mathbb{H}^{\times}\npt/\mathbb{Z}_2"] \\
M \rar["\mathbb{R}^{n+1}"] & \mathscr{S} \mathrlap{\, \subset \textup{Im}\mathbb{HP}^{\pt n}}
\end{tikzcd}
\smallskip
\end{equation*} 

To each of the two fibration structures on $N$ we can associate an adapted system of coordinates. For the $\smash{ \mathbb{R}^{n+1} }$-fibration  this is of course the extended Gibbons--Hawking one, given by $\psi_{\I}$ and $\vec{x}^{\pt\I}$. If, however, in this coordinate set we replace what we have called in the previous subsection the homogeneous coordinates $\vec{x}^{\pt\I}$ with the inhomogeneous ones $\vec{\rho}^{\pt\I}$ and the quaternionic coordinates $q$, both of which are now only locally defined, we obtain an alternative system of coordinates which is adapted to the $\smash{ \mathbb{H}^{\times}\npt/\mathbb{Z}_2 }$-fibration structure, with $\psi_{\I}$ and $\vec{\rho}^{\pt\I}$ coordinatizing the base $M$ and $q$ the quaternionic fiber. The two coordinate systems and their component counts are summarized in Table~\ref{adapted_coords}.
\begin{center}
\medskip
\begin{tabular}[H]{lcr|lcr} 
\multicolumn{3}{c|}{$\mathbb{R}^{n+1}$-bundle structure} 	& \multicolumn{3}{c}{\mbox{$\mathbb{H}^{\times} \npt/\mathbb{Z}_2$}-bundle structure} \\
\multicolumn{3}{c|}{adapted coordinates} 	& \multicolumn{3}{c}{adapted coordinates} \\[2pt] \hline
fiber	\rule[1.1em]{0pt}{0pt} 	& \quad $\psi_{\I}$		& $n+1$	& \multirow{2}{*}{base}	& \quad $\psi_{\I}$			& $n+1$  \\[3pt] 
base						& \quad $\vec{x}^{\pt\I}$	& $3n+3$ 	& 					& \quad $\vec{\rho}^{\,\I}$ 	& $3n-1$ \\[2pt] 
						&					&		& fiber 				& \quad $q\ $ 				& 4 \\[3pt] 
&& $\overline{4n+4 \rule[1em]{0pt}{0pt}}$	&					&					& $\overline{4n+4 \rule[1em]{0pt}{0pt}}$ 
\end{tabular}
\captionof{table}{Coordinate systems on $N$.\rule[1.1em]{0pt}{0pt} \vspace{-7pt} }  \label{adapted_coords}
\end{center}

Our goal in what follows will be to re-express the hyperk\"ahler 2-forms and metric on $N$ as well as the extended Bogomolny field equations in terms of the $\smash{ \mathbb{H}^{\times}\npt/\mathbb{Z}_2 }$-fibration-adapted coordinates. Let us begin with the last ones.

\subsubsection{The reduction of the first set of extended Bogomolny equations} \hfill \medskip

The symmetry constraints \eqref{HKC-conds} of Proposition~\ref{GH-HKC} mean that the Higgs field $U_{\I\J}$ determines, in the language coined above, a section $\mathscr{U}_{\I\J}$ of the bundle $\smash{ \textup{Sym}(M \otimes M) \otimes E^{\hp(-1,\mathbf{1})} }$ over $\mathscr{S}$ such that in $\smash{ \mathbb{H}^{\times}\npt/\mathbb{Z}_2 }$-fibration-adapted coordinates we have
\begin{equation} \label{U_IJ_dec}
U_{\I\J} = \frac{\mathscr{U}_{\I\J}}{|q|^2} \mathrlap{.}
\end{equation}
By the considerations in \mbox{\S\,\ref{ssec:nabla_Ii}} it is straightforward to see then that the reduction of the first set of extended Bogomolny equations \eqref{ext_Bogo_eqs} yields the conditions
\begin{equation} \label{red_Bogo_1}
\vec{\nabla}_{\I} \mathscr{U}_{\K\J} = \vec{\nabla}_{\J} \mathscr{U}_{\K\I} \mathrlap{.}
\end{equation}
One should not forget also that, as a section of the bundle $\smash{ E^{\hp(-1,\mathbf{1})} }$, the reduced Higgs field automatically satisfies the section type constraints
\begin{equation} \label{U_IJ-hor-c}
 \vec{\rho}^{\,\I} \npt\vprod \vec{\nabla}_{\I} \mathscr{U}_{\K\J} = \vec{0}
 \qquad\qquad
 \vec{\rho}^{\,\I} \npt\sprod \vec{\nabla}_{\I} \mathscr{U}_{\K\J} = - \pt \mathscr{U}_{\K\J} \mathrlap{.}
\end{equation}

\subsubsection{The reduction of the second set of extended Bogomolny equations} \hfill \medskip

In $\smash{ \mathbb{H}^{\times}\npt/\mathbb{Z}_2 }$-fibration-adapted coordinates the generators of the $\smash{ \mathbb{H}^{\times}\npt/\mathbb{Z}_2 }$-action on $N$ act along the fibers and take the form \eqref{L=lR}. Exploiting this fact, one can argue that in these coordinates connection 1-forms satisfying the gauge-fixing conditions \eqref{gauge-fix} must be of the form
\begin{equation} \label{A_K_dec}
A_{\K} = \mathscr{A}_{\K} + 2 \pt (\mathscr{U}_{\K\J} \vec{\rho}^{\, \J} ) \sprod \vec{\sigma}^{\hp R}
\end{equation}
with the components $\mathscr{A}_{\K}$ supported in $\mathscr{S}$, although still allowed in principle\,---\,for now\,---\,to depend on the $q$-coordinates. 

On another hand, from the decomposition relations \eqref{dx_dec} and \eqref{U_IJ_dec}, by way of the formula \eqref{delF}, we get that
\begin{equation}
\star^{\I} dU_{\K\I} = \frac{1}{2} \pt \vec{\nabla}_{\I}\mathscr{U}_{\K\J} \cdot  (\hp d\vec{\rho}^{\,\I} \npt - 2 \pt \sigma^{R}_0 \vec{\rho}^{\,\I} \npt + 2 \pt \vec{\sigma}^{\hp R}_{\phantom{0}} \!\times \npt \vec{\rho}^{\,\I} ) \nhp\wedge\nhp (\hp d\vec{\rho}^{\,\J} \npt - 2 \pt \sigma^{R}_0 \vec{\rho}^{\,\J} \npt + 2 \pt \vec{\sigma}^{\hp R}_{\phantom{0}} \!\times \npt \vec{\rho}^{\,\J} ) \mathrlap{.}
\end{equation}
If we distribute the wedge product, then use the reduced Bogomolny equation \eqref{red_Bogo_1} in tandem with the section type constraints \eqref{U_IJ-hor-c} to eliminate or simplify terms, and eventually the Cartan--Maurer equation for $\smash{ \vec{\sigma}^R }$ to consolidate the result, this becomes
\begin{equation}
\star^{\I} dU_{\K\I} = \frac{1}{2} \pt \vec{\nabla}_{\I} \mathscr{U}_{\K\J} \sprod (\hp d\vec{\rho}^{\,\I} \!\wedge d\vec{\rho}^{\,\J}) + d(\hp 2 \pt \mathscr{U}_{\K\J} \vec{\rho}^{\, \J} \!\sprod \vec{\sigma}^{\hp R}) \mathrlap{.}
\end{equation}

Recalling that $\smash{ F_{\K} = dA_{\K} }$, the second set of extended Bogomolny equations \eqref{ext_Bogo_eqs} implies then the conditions
\begin{equation} \label{red_Bogo_2}
d\mathscr{A}_{\K}  = \frac{1}{2}\pt \vec{\nabla}_{\I} \mathscr{U}_{\K\J} \sprod (d\vec{\rho}^{\,\I} \!\wedge d\vec{\rho}^{\,\J}) \mathrlap{.}
\end{equation}
This shows in particular that the 1-forms $\mathscr{A}_{\K}$ can be chosen in fact so as \textit{not} to depend on the \mbox{$q$-coordi}\-nates at all, and can be thus thought of as \textit{bona fide} connection 1-forms on the \mbox{$\mathbb{R}^{n+1}$-bundle} \mbox{$M \rightarrow \mathscr{S}$} induced by the connection 1-forms $A_{\K}$ on the \mbox{$\mathbb{R}^{n+1}$-bundle} \mbox{$N \rightarrow S$}.

\subsubsection{The reduction of the hyperk\"ahler 2-forms and metric} \hfill \medskip

At this point, it is opportune to switch temporarily to a quaternionic formalism. The operative observation here is that the key decomposition relations \eqref{x_dec} (or, rather, its differential version \eqref{dx_dec}), \eqref{U_IJ_dec} and \eqref{A_K_dec} can be pieced together into the following quaternionic decomposition formula for the quaternionic-valued coframe elements \eqref{H^I_def} of $T^*\npt N$:
\begin{equation}
H^{\I} = \bar{q}\pt( h^{\I} \npt - 2 \rho^{\I}\sigma^R ) q
\end{equation}
where $\rho^{\I}$ are the imaginary quaternionic avatars of the vectors $\smash{ \vec{\rho}^{\,\I} }$ and, by definition,
\begin{equation} \label{h^I_def}
h^{\I} = \mathscr{U}^{\I\J}(d\psi_{\J}+\mathscr{A}_{\J}) + d\vec{\rho}^{\,\I} \mathrlap{.}
\end{equation}
Note that these form in turn a complete set of quaternionic-valued coframe elements for $T^*\npt M$. 

Substituting then this expression for the elements $H^{\I}$ together with the formula \eqref{U_IJ_dec} for the Higgs field $U_{\I\J}$ into the quaternionic formulas \eqref{GH_quat} for the hyperk\"ahler 2-forms and metric on $N$ gives us after a series of rather straightforward algebraic manipulations the Swann-like expressions 
\begin{align} \label{Sw-scaled}
\Omega & = \mathscr{U}  \bar{q} \hp [\hp s\, \omega + (\overline{\sigma^R \! - \theta\hp}) \nhp\wedge\nhp (\sigma^R \! - \theta\hp) \hp] \hp q \\[2pt]
G & = \mathscr{U}  |q|^2 \hp (\hp s\pt g + |\sigma^R \! - \theta\hp|^2 \hp ) \nonumber
\end{align}
where we have denoted $\smash{ \mathscr{U} = 2 \hp \mathscr{U}_{\I\J} \hp \bar{\rho}^{\hp\I} \rho^{\J} = 2 \hp \mathscr{U}_{\I\J} \pt \vec{\rho}^{\,\I} \!\sprod \vec{\rho}^{\,\J} }$, which we assume to be non-vanishing,
\begin{equation} \label{theta_quat}
\theta = \frac{\mathscr{U}_{\I\J}\hp \bar{\rho}^{\I} h^{\J}}{2 \hp \mathscr{U}_{\M\N} \hp \bar{\rho}^{\hp\M} \rho^{\N}}
\end{equation}
and
\begin{align} \label{QK-quat-1}
s\, \omega & = \frac{(\mathscr{U}_{\K\L}\bar{\rho}^{\hp\K}\rho^{\L}) (\mathscr{U}_{\I\J}\bar{h}^{\hp\I} \!\wedge h^{\J}) - (\mathscr{U}_{\I\L}\bar{h}^{\I}\rho^{\L}) \nhp\wedge\nhp (\mathscr{U}_{\K\J}\bar{\rho}^{\hp \K}h^{\J})}{(\hp 2\hp \mathscr{U}_{\M\N}\bar{\rho}^{\hp\M}\rho^{\N})^2} \\
s\pt g & = \frac{(\mathscr{U}_{\K\L}\bar{\rho}^{\hp\K}\rho^{\L}) (\mathscr{U}_{\I\J}\bar{h}^{\hp\I} h^{\J}) - (\mathscr{U}_{\I\L}\bar{h}^{\I}\rho^{\L}) (\mathscr{U}_{\K\J}\bar{\rho}^{\hp \K}h^{\J})}{(\hp 2\hp \mathscr{U}_{\M\N}\bar{\rho}^{\hp\M}\rho^{\N})^2} \mathrlap{.} \nonumber
\end{align}
The two formulas \eqref{Sw-scaled} differ from the standard Swann bundle ones \eqref{Om_iii} and \eqref{G_iii} in two respects: first, by the presence of the overall factor $\mathscr{U}$,  and second, and perhaps less obviously, since obscured by the notation, by the fact that $\theta$ is \textit{not} an imaginary quaternion-valued 1-form but rather a \textit{full} quaternion-valued one; in other words, by the fact that its real part is non-vanishing. However, notice that this can be cast in the form
\begin{equation} \label{theta_0}
\theta_0 = \frac{1}{\mathscr{U}} \mathscr{U}_{\I\J} \vec{\rho}^{\,\J} \!\sprod d\vec{\rho}^{\,\I}   = \frac{d\mathscr{U}}{2\mathscr{U}} \mathscr{.}
\end{equation}
The first expression follows directly from the definition. To arrive at the second one, let us observe first that $\mathscr{ U }$ can be viewed as originating from the reduction of the hyperk\"ahler potential $U$, more precisely, that we have $\smash{ U = |q|^2 \mathscr{U} }$. The symmetry constraints of part $(b)$ of Proposition~\ref{U_duality} which the latter satisfies imply then that $\mathscr{ U }$ is a section of the line bundle $E^{\hp(1,\mathbf{1})}$. This means in particular that the covariant derivatives $\smash{ \vec{\nabla}_{\I} \mathscr{U} }$ are well-defined. In light of these facts the relation \eqref{del_U} can be easily reduced to yield 
\begin{equation} \label{del_curl-U}
\vec{\nabla}_{\I} \mathscr{U}  = 2\hp \mathscr{U}_{\I\J} \vec{\rho}^{\,\J} \mathrlap{.}
\end{equation} 
From this, the second expression for $\theta_0$ follows then immediately. Let us record also while at this that for the remaining imaginary/vectorial part of $\theta$ we obtain directly, without any further argument, 
\begin{equation} \label{theta_vec}
\vec{\theta} = - \frac{1}{\mathscr{U}} [\pt \mathscr{U}_{\I\J}\hp \vec{\rho}^{\,\J} \npt\vprod d\vec{\rho}^{\,\I} + \vec{\rho}^{\,\I} (d\psi_{\I} + \mathscr{A}_{\I}) \hp] \mathrlap{.}
\end{equation}
The final expression \eqref{theta_0} for $\theta_0$ shows that the departure from the Swann canonical forms is rectifiable: the overall \mbox{$\mathscr{ U }$-factor} and the real component of $\theta$ in the formulas \eqref{Sw-scaled} can both be removed simultaneously through a rescaling
\begin{equation}
q \longmapsto \frac{q}{|\mathscr{U}|\mathrlap{{}^{1/2}}} 
\end{equation}
of the fiber coordinates. When \mbox{$\mathscr{U} \nhp < 0$} this must be supplemented by a sign redefinition \mbox{$\Omega \mapsto - \pt \Omega$}, and consequently \mbox{$G \mapsto - \pt G$}. None of these maneuvers, however, affects the expressions for \mbox{$s\, \omega$}, \mbox{$s\pt g$} or the imaginary part of $\theta$, which can now be legitimately interpreted as quaternionic K\"ahler 2-forms, metric and $SO(3)$ connection 1-forms, respectively.

\subsubsection{A direct check} \hfill \medskip

In fact, since we are now in possession of explicit intrinsic expressions for these, we can verify \textit{directly} that they satisfy the requisite quaternionic K\"ahler properties without resorting to the extrinsic device of the Swann bundle. For this, let us observe first that similarly to the extended Gibbons--Hawking hyperk\"ahler case (see \mbox{\S\,\ref{GT-quatern}}), the reduced Bogomolny equation \eqref{red_Bogo_2} can be replaced in the quaternionic formalism with the equivalent condition
\begin{equation}
dh^{\I} = - \pt \frac{1}{2} \hp \mathscr{U}^{\I\L} \nabla_{\K k} \mathscr{U}_{\L\J} (h^{\K} \!\wedge \bar{h}^{\J})_k  \mathrlap{.}
\end{equation}
Using this along with the other reduced Bogomolny equation \eqref{red_Bogo_1} and the section type constraints \eqref{U_IJ-hor-c}, we can then show through a somewhat involved and rather technical exercise in differential form-valued vector calculus that the following two differentiation formulas
\begin{align}
d(\mathscr{U}_{\I\J} \bar{\rho}^{\I}\rho^{\J}) & = \frac{1}{2}\mathscr{U}_{\I\J} (\bar{h}^{\I}\rho^{\J} \npt+ \bar{\rho}^{\J}h^{\I}) \\
d(\mathscr{U}_{\I\J} \bar{\rho}^{\I}h^{\J}) & = \frac{1}{2} \mathscr{U}_{\I\J} \pt \bar{h}^{\I} \!\wedge h^{\J} \nonumber
\end{align}
hold. From these and the above definition of $\theta$ we eventually infer that
\begin{equation}
d\theta + \theta \wedge \theta = s \, \omega \mathrlap{.}
\end{equation}
Note that the real component of $\theta$ drops out identically from this equality leaving behind only its imaginary quaternionic part, and so this is precisely the Einstein condition \eqref{Einst_cond} of the quaternionic K\"ahler criterion laid out in Theorem~\ref{QK-criterion-2}. 

 \subsubsection{Alternative expressions} \hfill \medskip

 Let us return now to the quaternionic formulas \eqref{QK-quat-1} for the quaternionic K\"ahler 2-forms and metric, and observe that they can be alternatively recast through a purely algebraic effort in the\,---\,also quaternionic\,---\,form
\begin{align} \label{QK-quat-2}
s\, \omega & = \frac{1}{2\mathscr{U}} \pt \mathscr{U}_{\I\J} (\overline{h^{\I} \npt - 2\hp \rho^{\I} \theta \hp \rule[8.2pt]{0pt}{0pt} } ) \nhp\wedge\nhp (h^{\J} \npt - 2\hp \rho^{\J} \theta \hp) \\
s\pt g & = \frac{1}{2\mathscr{U}} \pt \mathscr{U}_{\I\J} (\overline{h^{\I} \npt - 2\hp \rho^{\I} \theta \hp \rule[8.2pt]{0pt}{0pt} }) (h^{\J} \npt - 2\hp \rho^{\J} \theta \hp) \nonumber
\end{align}
with $\theta$ given by the expression \eqref{theta_quat}. These are essentially the quaternionic K\"ahler analogues of the hyperk\"ahler formulas \eqref{GH_quat}. 
 
From them, we can transition back to a vectorial formalism by writing
\begin{equation}
h^{\I} \npt - 2\hp \rho^{\I} \theta = \mathscr{U}^{\I\J}(d\psi_{\J} + \mathscr{A}_{\J} + 2\hp\mathscr{U}_{\J\K}\hp\vec{\rho}^{\,\K} \!\sprod \vec{\theta} \,)  +  d\vec{\rho}^{\,\I} \npt - 2\pt \theta_0\pt \vec{\rho}^{\,\I} \npt  + 2 \pt \vec{\theta} \vprod \vec{\rho}^{\,\I}
\end{equation}
which then, upon substitution into the two formulas above, gives us the equivalent\,---\,and also most explicit\,---\,expressions
{\allowdisplaybreaks
\begin{alignat}{2}
s\, \vec{\omega} = & - \frac{1}{2\mathscr{U}} \mathscr{U}_{\I\J} (\hp d\vec{\rho}^{\,\I} \npt  - 2\pt \theta_0\pt \vec{\rho}^{\,\I} \npt + 2 \pt \vec{\theta} \vprod \vec{\rho}^{\,\I})  \nhp\wedge\nhp  (\hp d\vec{\rho}^{\,\J} \npt - 2\pt \theta_0\pt \vec{\rho}^{\,\J} \npt + 2 \pt \vec{\theta} \vprod \vec{\rho}^{\,\J}) \label{QK_2-form-Ans} \\
                              & - \frac{1}{\mathscr{U}} (\hp d\vec{\rho}^{\,\I} \npt - 2\pt \theta_0\pt \vec{\rho}^{\,\I} \npt + 2 \pt \vec{\theta} \vprod \vec{\rho}^{\,\I})  \nhp\wedge\nhp  (d\psi_{\I} + \mathscr{A}_{\I} + 2\hp\mathscr{U}_{\I\K}\hp\vec{\rho}^{\,\K} \!\sprod \vec{\theta} \,) \nonumber \\
s\pt g = & \, \frac{1}{2\mathscr{U}} \mathscr{U}_{\I\J} (\hp d\vec{\rho}^{\,\I} \npt - 2\pt \theta_0\pt \vec{\rho}^{\,\I} \npt + 2 \pt \vec{\theta} \vprod \vec{\rho}^{\,\I}) \sprod (\hp d\vec{\rho}^{\,\J} \npt - 2\pt \theta_0\pt \vec{\rho}^{\,\J} \npt + 2 \pt \vec{\theta} \vprod \vec{\rho}^{\,\J}) \label{QK_metric-Ans} \\
          +  & \, \frac{1}{2\mathscr{U}} \mathscr{U}^{\I\J} (d\psi_{\I} + \mathscr{A}_{\I} + 2\hp\mathscr{U}_{\I\K}\hp\vec{\rho}^{\,\K} \!\sprod \vec{\theta} \,) (d\psi_{\J} + \mathscr{A}_{\J} + 2\hp\mathscr{U}_{\J\L}\hp\vec{\rho}^{\,\L} \!\sprod \vec{\theta} \,) \mathrlap{.} \nonumber 
\end{alignat}
}%
The 1-forms $\theta_0$ and $\vec{\theta}$ are given in turn by the formulas \eqref{theta_0} and \eqref{theta_vec}. In this formulation, in particular, it is straightforward to see that the vector fields $\smash{ \partial_{\psi_{\I}} }$ generate an isometric \mbox{$\mathbb{R}^{n+1}$-action} with Galicki--Lawson moment map 
\begin{equation}
\partial_{\psi_{\I}}
\longmapsto
\vec{\nu}^{\,\I} = \frac{\vec{\rho}\mathrlap{{}^{\pt\I}}}{\mathscr{U}} \mathrlap{.}
\end{equation}

In the next subsection we compile for easy reference and present in an intrinsic geometric language the quaternionic K\"ahler construction which emerged from these considerations, in a form mirroring our earlier presentation of the extended Gibbons--Hawking hyperk\"ahler construction.

\subsection{The quaternionic K\"ahler Ansatz} \label{QK-Ansatz} \hfill \medskip

Consider a principal $\mathbb{R}^{n+1}$-bundle $M$ over an open subset $\mathscr{S}$ of $\smash{ \textup{Im}\mathbb{HP}^{\pt n} }$ and, alongside it, the restrictions to $\mathscr{S}$ of the associated $\smash{ \mathbb{H}^{\times} \npt /\mathbb{Z}_2 \pt}$-bundles $\smash{ E^{\hp(\mathfrak{rep})} }$ over $\smash{ \textup{Im}\mathbb{HP}^{\pt n} }$ defined in subsection \ref{ssec:ImHP^n}, which we denote with the same symbols. 
\begin{equation}
\begin{tikzcd}[column sep = 3ex]
M \arrow[dr, "\mathbb{R}^{n+1}" ' near start] & & E\mathrlap{{}^{\hp(\mathfrak{rep})}} \arrow[dl, "\mathbb{H}^{\times} \npt / \mathbb{Z}_2" near start] \\
& \mathscr{S}  \mathrlap{\ \subset \textup{Im}\mathbb{HP}\mathrlap{{}^{\pt n}}}  &
\end{tikzcd}
\end{equation}
Let $\vec{\nabla}_{\I}$ denote the connections induced on the latter bundles by the trivial connection $\smash{ \vec{\partial}_{\I} }$ on $\smash{ \mathbb{R}^{n+1} \!\otimes \mathbb{R}^3 }$. 

In this setup we want to formulate two sets of \textit{field equations}. For the first one let us assume that we have either 
\textit{
\begin{itemize}
\item[\textbf{I\pt a.}] a section $\mathscr{U}_{\I\J}$ of the bundle $\smash{ \textup{Sym} \pt (M \otimes M) \otimes E^{\hp (-1,\mathbf{1})} }$ over $\mathscr{S}$ satisfying the first-order differential constraints
\begin{equation}
\vec{\nabla}_{\I} \mathscr{U}_{\K\J} = \vec{\nabla}_{\J} \mathscr{U}_{\K\I} 
\end{equation}
\end{itemize}
\textup{or, alternatively,}
\begin{itemize}[resume]
\item[\textbf{I\pt b.}]  a section $\mathscr{U}$ of the line bundle $\smash{ E^{\hp(+1,\mathbf{1})} }$ over $\mathscr{S}$ satisfying the second-order differential constraints
\begin{equation} \label{red-U-constr}
\vec{\nabla}_{\I} \vprod \vec{\nabla}_{\J} \mathscr{U} = \vec{0} \mathrlap{.}
\end{equation}  
\end{itemize}
}%

\medskip

\noindent For which one of these conditions we opt is immaterial because
\begin{lemma}
The two conditions are equivalent, with the maps between them given, respectively, by
\begin{equation} \label{red-U-U_IJ}
\mathscr{U} = 2 \pt \mathscr{U}_{\I\J} \pt \vec{\rho}^{\,\I} \!\sprod \vec{\rho}^{\,\J}
\qquad \text{and} \qquad
\mathscr{U}_{\I\J} = \frac{1}{4} \pt \vec{\nabla}_{\I} \sprod \vec{\nabla}_{\J} \hp \mathscr{U} \mathrlap{.}
\end{equation}
\end{lemma}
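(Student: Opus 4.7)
The plan is to realize this lemma as the equivariant descent to $\textup{Im}\mathbb{HP}^{\pt n}$ of Proposition~\ref{U_duality}, which already establishes the analogous equivalence on an open subset of $\mathbb{R}^{n+1}\!\otimes\mathbb{R}^3$. The dictionary between sections of the associated bundles $E^{\hp(w,\mathfrak{rep})}$ and equivariant functions on $\mathbb{R}^{n+1}\!\otimes\mathbb{R}^3 \smallsetminus C_{\textup{deg}}$ developed in \S\ref{ssec:nabla_Ii} supplies the translation: $\mathscr{U}_{\I\J}$ lifts to $U_{\I\J}=|q|^{-2}\pt\mathscr{U}_{\I\J}$, while $\mathscr{U}$ lifts to $U=|q|^{2}\mathscr{U}$, and each automatically inherits the respective symmetry constraints of parts $(a)$ and $(b)$ of Proposition~\ref{U_duality}.

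First I would verify that the reduced differential constraints translate faithfully into the bulk ones. For $U_{\K\J}$, which has weight $-1$ and trivial $SO(3)$ representation type, formula \eqref{delF} gives $\partial_{x^{\I}_i}\hp U_{\K\J} = |q|^{-4}\, R_{ij}(q^{-1})\,\nabla_{\I j}\mathscr{U}_{\K\J}$, and invertibility of the prefactor exhibits the equivalence of $\vec{\partial}_{\I}\hp U_{\K\J}=\vec{\partial}_{\J}\hp U_{\K\I}$ with $\vec{\nabla}_{\I}\mathscr{U}_{\K\J} = \vec{\nabla}_{\J}\mathscr{U}_{\K\I}$. For $U$ of weight $+1$, a single application of \eqref{delF} produces a weight-$0$ vectorial equivariant function whose section is $\nabla_{\J j}\mathscr{U}$; applying the $E^{\hp(0,\mathbf{3})}$-version of \eqref{delF} to this a second time, and invoking the $SO(3)$ identities $\sum_{i}R_{ik}(q^{-1})R_{il}(q^{-1})=\delta_{kl}$ and $\varepsilon_{mij}R_{ik}(q^{-1})R_{jl}(q^{-1})=R_{mn}(q^{-1})\varepsilon_{nkl}$, yields respectively $\vec{\partial}_{\I}\sprod\vec{\partial}_{\J}\,U = |q|^{-2}\,\vec{\nabla}_{\I}\sprod\vec{\nabla}_{\J}\,\mathscr{U}$ and $\vec{\partial}_{\I}\vprod\vec{\partial}_{\J}\,U = |q|^{-2}\,R(q^{-1})\hp(\vec{\nabla}_{\I}\vprod\vec{\nabla}_{\J}\,\mathscr{U})$. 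Invertibility of the prefactor in the cross-product identity then establishes the equivalence of the bulk constraint with \eqref{red-U-constr}.

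Next I would match the explicit reduction maps. Since the adjoint action $\vec{x}^{\pt\I}=\bar{q}\pt\vec{\rho}^{\,\I}q$ rescales the Euclidean inner product on $\Im\mathbb{H}$ by a factor of $|q|^4$, one has $\vec{x}^{\pt\I}\pt\sprod\pt\vec{x}^{\pt\J}=|q|^4\,\vec{\rho}^{\,\I}\pt\sprod\pt\vec{\rho}^{\,\J}$. Substituting this together with $U_{\I\J}=|q|^{-2}\mathscr{U}_{\I\J}$ into the first relation \eqref{HK-pot/Higgs} and cancelling the common $|q|^{2}$ factor yields $\mathscr{U} = 2\pt \mathscr{U}_{\I\J}\,\vec{\rho}^{\,\I}\pt\sprod\pt\vec{\rho}^{\,\J}$. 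The reverse map \eqref{red-U-U_IJ} follows in the same way from the dot-product reduction identity derived in the previous paragraph.

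With both the differential constraints and the transformation maps thus brought into correspondence on the two sides of the lift--descent dictionary, the sought equivalence follows immediately from Proposition~\ref{U_duality}. The main bookkeeping subtlety is keeping track of the $SO(3)$-twist by $R(q^{-1})$ coming from the iterated application of \eqref{delF}; this turns out to be harmless thanks to the $SO(3)$-equivariance of both the dot and cross products, so that it cancels cleanly in each of the reduced conditions.
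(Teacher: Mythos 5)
Your proposal is correct and follows exactly the route the paper takes: the paper's own proof is the one-line assertion that the lemma ``follows immediately from Proposition~\ref{U_duality} by reduction,'' and your argument simply spells out that reduction in detail, verifying via \eqref{delF} and the orthogonality/equivariance of $R(q^{-1})$ that the bulk constraints, symmetry conditions and explicit maps all translate faithfully into their reduced counterparts. The bookkeeping (weights $-1$ and $+1$, the $|q|^4$ scaling of the inner product, the $\varepsilon$--$R$ identity for the cross product) all checks out.
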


\noindent This follows immediately from Proposition~\ref{U_duality} by reduction. One can in fact give a completely intrinsic proof of this result simply by mimicking the proof of Proposition~\ref{U_duality} in covariant calculus language and noticing that the symmetry constraints there show up here in the guise of section type constraints. 

On the other hand, for the second set of field equations let us assume that we have, in addition, 
\textit{
\begin{enumerate}
\item[\textbf{II.}]  a principal connection on $M$ with curvature 2-form $\smash{ (\mathscr{F}_{\K})_{\K = 0, \dots, n} }$ determined by the Higgs field through the following Bogomolny-like condition:
\begin{equation} \label{red_Bogo_2bis}
\mathscr{F}_{\K}  = \frac{1}{2}\pt \vec{\nabla}_{\I} \mathscr{U}_{\K\J} \sprod (d\vec{\rho}^{\,\I} \!\wedge d\vec{\rho}^{\,\J}) \mathrlap{.}
\end{equation}
\end{enumerate}
}


Based on the previous considerations we can now state the following 

\begin{theorem} \label{QK-GH-analog}
Let $\smash{(\mathscr{U}_{\I\J}, \mathscr{F}_{\K})}$ or, equivalently, $\smash{(\mathscr{U}, \mathscr{F}_{\K})}$ be a set of solutions to the above field equations, and assume that we can adjust the open subset $\mathscr{S}$ of $\smash{ \textup{Im}\mathbb{HP}^{\pt n} }$ on which they are defined in such a way as to have both $\smash{ \det(\mathscr{U}_{\I\J}) \neq 0 }$ and $\smash{ \mathscr{U} \neq 0 }$ everywhere on it. On the total space of the $\mathbb{R}^{n+1}$-bundle $M$ we associate to this solution the triplet of 1-forms
\begin{equation}
\vec{\theta} = - \, \mathscr{W}_{\I\J} \hp \vec{\nu}^{\,\J} \npt\vprod d\vec{\nu}^{\,\I} - \vec{\nu}^{\,\I} (d\psi_{\I} + \mathscr{A}_{\I})
\end{equation}
and triplet of 2-forms
{\allowdisplaybreaks
\begin{align}
s\, \vec{\omega}  = & - \frac{1}{2} \pt \mathscr{W}_{\I\J} (d\vec{\nu}^{\pt\I} + 2\pt \vec{\theta} \vprod \vec{\nu}^{\pt\I}) \nhp\wedge\nhp (d\vec{\nu}^{\pt\J} + 2\pt \vec{\theta} \vprod \vec{\nu}^{\pt\J}) \\[2pt]
& - (d\vec{\nu}^{\pt\I} + 2\pt \vec{\theta} \vprod \vec{\nu}^{\pt\I}) \nhp\wedge\nhp (d\psi_{\I} + \mathscr{A}_{\I} + 2 \hp \mathscr{W}_{\I\K}\vec{\nu}^{\pt\K} \!\sprod \vec{\theta} \,) \nonumber \\
\intertext{expressed here in a local coordinate trivialization, with $\psi_{\I}$ coordinates on the fibers and $\vec{\rho}^{\,\I}$ inhomogeneous coordinates on the base; by definition, $s$ is a non-vanishing real constant, $\mathscr{A}_{\K}$ is a local connection 1-form with curvature \mbox{$d\mathscr{A}_{\K} = \mathscr{F}_{\K}$},
$$
\mathscr{W}_{\I\J} = \mathscr{U}\mathscr{U}_{\I\J}
\qquad \text{and} \qquad
\vec{\nu}^{\pt\I} = \frac{\vec{\rho}\mathrlap{{}^{\pt\I}}}{\mathscr{U}} \mathrlap{.}
$$
These then form a quaternionic K\"ahler structure on $M$, with quaternionic K\"ahler metric}
s\pt g  = & \,\pt \frac{1}{2} \pt \mathscr{W}_{\I\J} (d\vec{\nu}^{\pt\I} + 2\pt \vec{\theta} \vprod \vec{\nu}^{\pt\I}) \sprod (d\vec{\nu}^{\pt\J} + 2\pt \vec{\theta} \vprod \vec{\nu}^{\pt\J}) \\[-2pt]
            + & \,\pt \frac{1}{2} \pt \mathscr{W}^{\I\J} (d\psi_{\I} + \mathscr{A}_{\I} + 2\hp \mathscr{W}_{\I\K}\vec{\nu}^{\pt\K} \!\sprod \vec{\theta} \,) (d\psi_{\J} + \mathscr{A}_{\J} + \mathrlap{ 2 \hp \mathscr{W}_{\J\L}\vec{\nu}^{\pt\L} \!\sprod \vec{\theta} \,) } \nonumber 
\end{align}
}%
possessing an isometric locally free $\mathbb{R}^{n+1}$-action generated by the vector fields $\smash{ \partial_{\psi_{\I}} }$, with  corresponding Galicki--Lawson moment map images $\smash{ \vec{\nu}^{\pt\I} }$.

Conversely, any $4n$-dimensional quaternionic K\"ahler manifold with an isometric locally free $\mathbb{R}^{n+1}$-action arises locally in this way.
\end{theorem}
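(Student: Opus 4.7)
My plan is to prove both directions by threading the data through the Swann bundle correspondence, reducing the statement to the already-developed Gibbons--Hawking and hyperk\"ahler-cone analysis. More precisely, given a solution $(\mathscr{U}_{\I\J},\mathscr{F}_{\K})$ to the field equations on $\mathscr{S}\subset\textup{Im}\mathbb{HP}^{\pt n}$, I would build on the total space of its $\mathbb{R}^{n+1}$-bundle $M$ an associated extended Gibbons--Hawking space $N$ of real dimension $4n+4$ and show that $N$ carries a hyperk\"ahler cone structure whose Swann projection recovers the triplet $(g,\vec{\omega})$ of the theorem. Conversely, starting from a quaternionic K\"ahler $M$ with an isometric locally free $\mathbb{R}^{n+1}$-action, I would Swann-lift it and recognize the resulting hyperk\"ahler cone as an extended Gibbons--Hawking space satisfying the conditions of Proposition~\ref{GH-HKC}, at which point the reduction of subsection~\ref{ssec:Reduction} recovers the claimed formulas.

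For the forward direction I would proceed as follows. Pull the data back along the quotient map $\mathbb{R}^{n+1}\otimes\mathbb{R}^3\smallsetminus C_{\textup{deg}}\to\textup{Im}\mathbb{HP}^{\pt n}$ to obtain a Higgs field $U_{\I\J}=\mathscr{U}_{\I\J}/|q|^2$ and connection forms $A_{\K}=\mathscr{A}_{\K}+2\hp(\mathscr{U}_{\K\J}\vec{\rho}^{\pt\J})\sprod\vec{\sigma}^{\hp R}$ on an open subset $S$ of $\mathbb{R}^{n+1}\otimes\mathbb{R}^3$. The section-type constraints for $\mathscr{U}_{\I\J}$, together with the reduced Bogomolny condition \eqref{red_Bogo_2bis}, translate, by the very calculations of subsection~\ref{ssec:Reduction} run in reverse, into the extended Bogomolny equations \eqref{ext_Bogo_eqs} for $(F_{\K},U_{\I\J})$ on $S$, while the scaling/rotation behavior of $\mathscr{U}_{\I\J}$ encodes precisely the constraints \eqref{HKC-conds} of Proposition~\ref{GH-HKC}. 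Hence the corresponding extended Gibbons--Hawking space $N$ is a hyperk\"ahler cone. The Swann description of the associated quaternionic K\"ahler base returns, after the rescaling $q\mapsto q/|\mathscr{U}|^{1/2}$ carried out below formula~\eqref{theta_vec}, the 2-forms and metric in the statement of the theorem; the algebraic and Einstein conditions of Theorem~\ref{QK-criterion-2} then hold automatically, the latter being equivalent to the already-verified identity $d\theta+\theta\wedge\theta=s\,\omega$.

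For the converse I would take a $4n$-dimensional quaternionic K\"ahler manifold $M$ with a locally free isometric $\mathbb{R}^{n+1}$-action and pass to its Swann bundle $\mathcal{U}(M)$. By Theorem~\ref{Sw-symm} the $n+1$ Killing fields lift to commuting tri-Hamiltonian vector fields on $\mathcal{U}(M)$ which commute with the canonical $\mathbb{H}^{\times}\npt/\mathbb{Z}_2$-action; together these two actions are locally free and account for $(n+1)+4=4n+4$ tangent directions, i.e. the full dimension of $\mathcal{U}(M)$. In particular, the tri-Hamiltonian $\mathbb{R}^{n+1}$-action has maximal rank, so Proposition~2.1 of \cite{MR1704547} realizes $\mathcal{U}(M)$ locally as an extended Gibbons--Hawking space; the moment map images $\bar{q}\,\vec{\nu}_{\mathscr{X}}q$ of \eqref{Sw_mom_maps} supply the required base coordinates $\vec{x}^{\pt\I}$. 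The commuting $\mathbb{H}^{\times}\npt/\mathbb{Z}_2$-action descends to $\mathbb{R}^{n+1}\otimes\mathbb{R}^3$ as the collective action \eqref{H*-GH}, and Proposition~\ref{GH-HKC} then forces $U_{\I\J}$ to be invariant under rigid rotations and homogeneous of degree $-1$ in the $\vec{x}^{\pt\I}$. Applying the reduction of subsection~\ref{ssec:Reduction} verbatim yields the quaternionic K\"ahler data on $M$ in the form asserted.

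The main technical obstacle I anticipate is in the converse: ensuring that the fiber coordinates $\psi_{\I}$ produced by the extended Gibbons--Hawking realization of $\mathcal{U}(M)$ can be chosen \emph{simultaneously} invariant under the $\mathbb{H}^{\times}\npt/\mathbb{Z}_2$-action, so that one can enforce the gauge \eqref{gauge-fix} in which $X_a=L_a$. A priori one only knows that the two group actions commute, not that the Gibbons--Hawking trivialization renders them block-diagonal. Resolving this amounts to exhibiting a section of the $\mathbb{R}^{n+1}$-bundle $N\to S$ stable under the collective action, and verifying that the residual freedom in $A_{\K}$ (shifts by exact differentials of rotation- and scaling-invariant functions) suffices to implement \eqref{gauge-fix}. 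Once this gauge is in place, the remainder of the argument is routine matching of the formulas of subsection~\ref{ssec:Reduction} against those in the theorem.
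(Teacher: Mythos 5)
Your proposal is correct and follows essentially the same route as the paper: the direct claim is obtained by lifting the reduced data to an extended Gibbons--Hawking hyperk\"ahler cone and descending through the Swann correspondence (equivalently, by reading the reduction of \S\,\ref{ssec:Reduction} in reverse, with the Einstein condition checked via $d\theta+\theta\wedge\theta=s\,\omega$), and the converse proceeds via Theorem~\ref{Sw-symm}, Bielawski's Proposition~2.1, Proposition~\ref{GH-HKC} and the same reduction. The gauge-fixing subtlety you flag is already settled by the discussion following Proposition~\ref{GH-HKC}, where it is shown that $\mathcal{L}_{X_a}A_{\I}$ is exact, so the conditions \eqref{gauge-fix} can always be enforced within the residual gauge freedom.
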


The direct claim encapsulates the results of the previous subsection. Our arguments there have yielded in fact three more equivalent sets of formulas for the quaternionic K\"ahler connection \mbox{1-forms}, \mbox{2-forms} and metric, which we list by reference number in Table~\ref{alt_exprs}. 
\begin{center}
\def\arraystretch{1.35} 
\begin{tabularx}{0.93\textwidth}{r*{3}{Y}l}
& $\theta$ & $s\,\omega$ & $s\pt g$ & Formalism \\ \hline
1. & eq.~\eqref{theta_quat} & eq.~\eqref{QK-quat-1} & eq.~\eqref{QK-quat-1} & quaternionic \\ 
2. & eq.~\eqref{theta_quat} & eq.~\eqref{QK-quat-2} & eq.~\eqref{QK-quat-2} & quaternionic \\[-0.5pt]
3. & \mbox{ eqs.~\eqref{theta_0} \& \eqref{theta_vec} } & eq.~\eqref{QK_2-form-Ans} & eq.~\eqref{QK_metric-Ans} & vectorial \\[1pt] \hline 
\end{tabularx}
\captionof{table}{\rule[14pt]{0pt}{0pt}Alternative expressions for the quaternionic K\"ahler connection 1-forms, 2-forms and metric.} 
\vskip-8pt \label{alt_exprs}
\end{center} 

For the converse statement, if we assume that we have a $4n$-dimensional quaternionic K\"ahler space $M$ with an isometric locally free $\smash{ \mathbb{R}^{n+1} }$-action, then by Theorem~\ref{Sw-symm} this action lifts to an \mbox{$\smash{ \mathbb{R}^{n+1} }$-action} on the \mbox{$4(n+1)$}-dimensional Swann bundle $\mathcal{U}(M)$. Bielawski's Proposition~2.1 from \cite{MR1704547} guarantees then that the hyperk\"ahler structure on $\mathcal{U}(M)$ is given locally by an extended Gibbons--Hawking construction. In particular, corresponding to these actions, the quaternionic K\"ahler moment maps are lifted to hyperk\"ahler moment maps in accordance with the rule \eqref{Sw_mom_maps}, and so we conclude that the canonical \mbox{$\smash{ \mathbb{H}^{\times}\npt/\mathbb{Z}_2 }$-action} on $\mathcal{U}(M)$ acts on the base of the extended Gibbons--Hawking fibration by what we have called collective transformations. We find ourselves therefore within the bounds of Proposition~\ref{GH-HKC}, which means that we can repeat \textit{ad litteram} the reduction procedure of subsection \mbox{\S\,\ref{ssec:Reduction}} to obtain an explicit local description of the quaternionic K\"ahler structure on $M$ in the precise same category as the one above, proving thus the generality of this construction. 

The theorem encompasses the range of both definite and indefinite signature metrics. The construction it describes is in a very clear sense a quaternionic K\"ahler analogue of the extended Gibbons--Hawking hyperk\"ahler Ansatz. The role of hyperk\"ahler moment map is played here by Galicki and Lawson's concept of quaternionic K\"ahler moment map; however, unlike in the  Gibbons--Hawking case, the moment map images do not define coordinates (for which purpose they would be too many) but, rather, they are related to these through a scaling factor. Another, more surprising departure from the extended Gibbons--Hawking case is the existence of a dual description implying that the quaternionic K\"ahler structure is, quite remarkably, completely determined locally in all dimensions by a \textit{single} real-valued potential $\mathscr{U}$ satisfying a set of linear differential constraints. The linearity, in particular, means that in a very definite sense one can superpose two such quaternionic K\"ahler structures to obtain a third.

\subsection{The case of four dimensions} \hfill \medskip

We end this section with a closer look at the four-dimensional (\textit{i.e.}~\mbox{$n=1$}) specialization of this construction. In this case, $M$ is the four-dimensional avatar of a quaternionic K\"ahler space\,---\,namely, a self-dual Einstein space with non-vanishing scalar curvature\,---\,possessing two linearly independent commuting Killing vector fields. The local geometry of this class of spaces was described explicitly by Calderbank and Pedersen in \cite{MR1950174} in terms of an eigenfunction of the Laplacian on the Poincar\'e half-plane, following an intrinsic, four-dimensional analysis based on previous work by Joyce \cite{MR1324633}, Ward \cite{MR1045295} and Tod \cite{MR1423177}, among others. We will now show that our results reproduce theirs in this dimension, while also offering a few additional bonuses. In this sense, therefore, the construction of Theorem~\ref{QK-GH-analog} may be viewed as a higher-dimensional generalization of the  Calderbank--Pedersen construction\,---\,similarly, although much less trivially, to the way in which, in hyperk\"ahler context, the extended Gibbons--Hawking construction of \cite{MR877637, MR953820} can be viewed as a higher-dimensional generalization of  the original four-dimensional Gibbons--Hawking one. 

First, let us observe that for \mbox{$n=1$} the space $\smash{ \text{Im}\mathbb{HP}^1 }$ is isomorphic to the \textit{upper-half plane} $\mathscr{H}^2$ and can be covered by a single inhomogeneous coordinate chart. We choose the inhomogeneous coordinates as in the Example in \mbox{\S\,\ref{ssec:in/hom}} to be given by
\begin{alignat}{2} 
\vec{\rho}^{\,\hp 0} \nhp & = && \mathbf{i} \\[-1pt]
\vec{\rho}^{\,1} & = \rho_1\pt && \mathbf{i} + \rho_2 \,\mathbf{j} \nonumber
\end{alignat}
with \mbox{$\rho_2 > 0$}, where, for simplicity, we drop the now inessential upper index 1 from the notation of the components of the second vector. As argued subsequently in \mbox{\S\,\ref{ssec:flt_ind_conn}}, a choice of inhomogeneous coordinates fixes completely the form of the covariant connections induced on bundles $\smash{ E^{\hp(\mathfrak{rep})} \rightarrow \text{Im}\mathbb{HP}^1 }$ by the trivial connection on, in this case, $\smash{ \mathbb{R}^2 \!\otimes \mathbb{R}^3 }$. To determine these, we need both the connection coefficients $\smash{ (\mathfrak{A}_{\I i})_{\I = 0,1} }$, which are already listed in \eqref{A-coeffs},  and the operators $\smash{ (\mathfrak{D}_{\I i})_{\I = 0,1} }$, which in this case can be worked out to be
\begin{align}
\vec{\mathfrak{D}}_0 & = - \pt \mathbf{i} \pt (\rho_1\partial_{\rho_1} + \rho_2 \partial_{\rho_2}) + \mathbf{j} \pt (\rho_2\partial_{\rho_1} - \rho_1 \partial_{\rho_2}) \\
\vec{\mathfrak{D}}_1 & = \mathbf{i} \pt \partial_{\rho_1} + \mathbf{j} \pt \partial_{\rho_2} \mathrlap{.} \nonumber
\end{align}

Note that for this choice of inhomogeneous coordinates the only non-vanishing contribution on the right-hand side of the second reduced Bogomolny equation \eqref{red_Bogo_2bis} comes possibly from the term $\smash{ \nabla_{1 3} \mathscr{U}_{\K1} \pt d\rho_1 \npt\wedge d\rho_2 }$. However, this covariant derivative vanishes, too, and the equation becomes simply 
\begin{equation}
\mathscr{F}_{\K} = 0 \mathrlap{.}
\end{equation}
This means that in four dimensions we can consistently set in the formulas, which are locally defined, $\smash{ \mathscr{A}_{\K} = 0  }$. 

In fact, a simple inspection shows that for this choice of inhomogeneous coordinates \textit{any} covariant derivative $\smash{ \nabla_{\I i} }$ with \mbox{$i = 3$} of no matter what section vanishes. The second-order differential constraints \eqref{red-U-constr} reduce therefore to a single one, which reads
\begin{equation} \label{U-diff-constr-4D}
\rho_2 \hp (\mathscr{U}_{\rho_1\rho_1} + \mathscr{U}_{\rho_2\rho_2}) = \mathscr{U}_{\rho_2}
\end{equation}
with the indices indicating derivatives. To obtain this result it is important to recall that $\mathscr{U}$ is a section of the bundle $\smash{ E^{\hp(1,\mathbf{1})} }$, and so $\smash{ \nabla_{\I i}\mathscr{U} }$ is a section of the bundle $\smash{ M^* \npt\otimes E^{\hp(0,\mathbf{3})} }$. The calculation of covariant derivatives of both such sections has been described in detail in \mbox{\S\,\ref{ssec:nabla_Ii}}.  
The constraint can be equivalently recast in the form
\begin{equation}
\Delta_{\mathscr{H}^2} \bigg( \frac{\mathscr{U}}{\sqrt{\rho_2}} \bigg) = \frac{3}{4} \pt \frac{\mathscr{U}}{\sqrt{\rho_2}}
\end{equation}
where $\smash{ \Delta_{\mathscr{H}^2} = \rho_2^2 \pt (\hp \partial^2_{\rho_1} + \partial^2_{\rho_2}) }$ is the Laplacian corresponding to the Poincar\'e metric 
\begin{equation}
g_{\mathscr{H}^2} = \frac{d\rho_1^2 + d\rho_2^2}{4\hp\rho_2^2}
\end{equation}
on the upper half-plane. We retrieve thus a first key result of Calderbank and Pedersen. 

Let us define now the following 1-forms supported on the $\mathbb{R}^2$-fibers
\begin{equation}
\lambda_i^{\phantom{.}} = 2 \pt \rho^{\I}_i \hp d\psi_{\I} = 2 \npt
\begin{pmatrix*}[r]
d\psi_0 + \rho_1d\psi_1 \\ 
\rho_2 \hp d\psi_1 \\ 
0\pt\pt
\end{pmatrix*}
\end{equation}
and similarly fiberwise acting commuting vector fields
\begin{equation}
v_i = \nabla_{\I i} \mathscr{U} \pt \partial_{\psi_{\I}} = 
\begin{pmatrix*}[r]
(\mathscr{U} - \rho_1\mathscr{U}_{\rho_1} - \rho_2\mathscr{U}_{\rho_2}) \hp \partial_{\psi_0} + \mathscr{U}_{\rho_1} \partial_{\psi_1} \\[1pt]
(\rho_2\mathscr{U}_{\rho_1} - \rho_1\mathscr{U}_{\rho_2}) \hp \partial_{\psi_0} + \mathscr{U}_{\rho_2} \partial_{\psi_1}  \\[1pt] 
0\pt\pt\hp
\end{pmatrix*} \!\npt\mathrlap{.}
\end{equation}
The third component vanishes in each case. The remaining components, $\lambda_1$, $\lambda_2$ and $v_1$, $v_2$, form a local coframe and frame, respectively, for the $\mathbb{R}^2$-fibers. By resorting to the relation \eqref{del_curl-U}, we obtain then that
\begin{equation}
\mathscr{U}_{\I\J} \hp \rho^{\I}_i \hp \rho^{\J}_j = \frac{1}{4} \pt \iota_{v_j} \lambda_i^{\phantom{.}} = \frac{1}{2} \npt
\begin{pmatrix*}[r]
\mathscr{U} - \rho_2\mathscr{U}_{\rho_2} & \rho_2 \mathscr{U}_{\rho_1} &  0\pt\pt\hp \\[1pt]
\rho_2 \mathscr{U}_{\rho_1} & \rho_2 \mathscr{U}_{\rho_2} & 0\pt\pt\hp \\[1pt]
0\pt\pt\hp & 0\pt\pt\hp & 0\pt\pt\hp
\end{pmatrix*} \!\npt\mathrlap{.}
\end{equation}
Again, all components along the third direction vanish, leaving a two-dimensional non-vanishing square block. Since in this case the Higgs field matrix $(\mathscr{U}_{\I\J})_{\I,\J = 0,1}$ is also two-di\-men\-sio\-nal, this relation can be used to determine it explicitly through a straightforward exercise in linear algebra. One obtains the same result, although at a greater expense of calculational effort, from the second relation \eqref{red-U-U_IJ}, upon use of the constraint \eqref{U-diff-constr-4D}, which serves to eliminate the second derivatives of $\mathscr{U}$. 

Knowledge of the Higgs field and principal $\smash{ \mathbb{R}^2 }$-connection allows us now to work out explicitly the geometric structure on $M$ given by Theorem~\ref{QK-GH-analog}.  Upon substitution, the formula \eqref{theta_vec} yields, on one hand,
\begin{equation}
\vec{\theta} = - \frac{1}{2\mathscr{U}} [\pt \lambda_1 \mathbf{i} + \lambda_2 \pt \mathbf{j} + (\mathscr{U}_{\rho_1} d\rho_2 - \mathscr{U}_{\rho_2} d\rho_1) \hp \mathbf{k} \pt ] \mathrlap{.}
\end{equation}
To formulate the remaining results it is useful to define the quaternionic-valued 1-form
\begin{equation}
\xi = \frac{d\rho_1 \mathbf{i} + d\rho_2 \pt \mathbf{j}}{2\rho_2} + \frac{\varepsilon(v_1,d\psi) + \varepsilon(v_2,d\psi) \pt \mathbf{k}}{\varepsilon(v_1,v_2)}
\end{equation}
where $\smash{ \varepsilon = d\psi_0 \wedge d\psi_1 }$ is the standard symplectic form on the $\mathbb{R}^2$-fibers and $\smash{ \varepsilon(v_i,d\psi) }$ are the same thing as $\smash{ \iota_{v_i}\varepsilon }$. The formulas \eqref{QK_2-form-Ans} and \eqref{QK_metric-Ans} reduce then, on the other hand, following an extended calculation, to the simple expressions
\begin{align}
s\, \omega & =  \frac{\rho_2 \pt \varepsilon(v_1,v_2)}{\mathscr{U}^2} \, \bar{\xi} \wedge \xi \\
s\pt g & =  \frac{\rho_2 \pt \varepsilon(v_1,v_2)}{\mathscr{U}^2} \, |\xi|^2 \mathrlap{.} \nonumber
\end{align}
One can easily verify that this metric coincides indeed, up to a function redefinition, with the one found by Calderbank and Pedersen in \cite{MR1950174} or, rather, with the version given by formula (2.4) from \cite{MR2225692}. The structure it describes is that of an $\smash{ \mathbb{R}^2 }$-fibration over the Poincar\'e upper half-plane, conformally rescaled. In particular, this form makes it clear that, as observed in \cite{MR1950174}, if we assume that $g$ is positive definite, the sign of the scalar curvature is dictated by the sign of the factor $\smash{ \varepsilon(v_1,v_2) = \mathscr{U}\mathscr{U}_{\rho_2} - \rho_2(\mathscr{U}_{\rho_1}^2 + \mathscr{U}_{\rho_2}^2) }$.

\section{Hyperk\"ahler cones via the Legendre transform construction} \label{sec:HK-LT}

\subsection{The Legendre transform construction} \hfill \medskip

Translating into geometric terms insights derived from a supersymmetric quantum field-the\-o\-re\-tic problem \cite{Lindstrom:1983rt}, Hitchin, Karlhede, Lindstr\"om and Ro\v{c}ek introduced in \cite{MR877637} a method of constructing hyperk\"ahler metrics of extended Gibbons--Hawking type known as the \textit{Legendre transform construction}. In this approach, the metric information is stored in a single real-valued function $L$ defined on some open subset $S$ of \mbox{$\mathbb{R}^m \npt\otimes \mathbb{R}^3$}, satisfying the linear second-order differential constraints 
\begin{equation}
\vec{\partial}_{\I} \sprod \vec{\partial}_{\J} L = 0
\qquad\text{and}\qquad
\vec{\partial}_{\I} \vprod \vec{\partial}_{\J} L = \vec{\pt 0} \mathrlap{.}
\end{equation}
This function is associated to a choice of direction in $\mathbb{R}^3$, which we take here to be the direction of the $\mathbf{i}\pt$-axis. Accordingly, it is convenient to view the space \mbox{$\mathbb{R}^m \npt\otimes \mathbb{R}^3$} as \mbox{$\mathbb{C}^m \!\times\nhp \mathbb{R}^m$} 
and work with the complex linear combinations
$\smash{ z^{\I} = \frac{1}{2}(x^{\I}_2 + i\hp x^{\I}_3)  }$, $\smash{ x^{\I} = x^{\I}_1 }$, $\smash{ \bar{z}^{\I} = \frac{1}{2}(x^{\I}_2 - i\hp x^{\I}_3) }$
of the real coordinates $\smash{ x^{\I}_i }$. In terms of these, the differential constraints take the form
\begin{equation} \label{L_constr_LT}
L_{x^{\I}x^{\J}} = - \pt L_{z^{\I}\bar{z}^{\J}}
\qquad\text{and}\qquad
L_{x^{\I}z^{\J}} = L_{x^{\J}z^{\I}} \mathrlap{.} \phantom{--}
\end{equation}
The hyperk\"ahler geometric structure is then extracted from $L$ based on the following two properties:
\begin{itemize}
\setlength\itemsep{0.2em}

\item[1)] The (flipped-sign) Legendre transform
\begin{equation} 
\kappa(z,\bar{z},u,\bar{u}) = \langle \hp L(z,\bar{z},x) - 2 \Im u_{\I}\hp x^{\I} \hp \rangle_{\hp x}
\end{equation}
of the function $L$ with respect to the $x^{\I}$-variables, where one assumes that the Legendre constraints $\smash{L_{x^{\I}} = 2 \Im u_{\I}}$ can be implicitly solved to give unequivocally $\smash{ x^{\I} = x^{\I}(z,\bar{z},u,\bar{u})}$, yields a K\"ahler potential for the hyperk\"ahler symplectic form $\Omega_1$ along the chosen direction, together with a complete set of coordinates $z^{\I}$ and $u_{\I}$ holomorphic with respect to the corresponding complex structure~$I_1$. 

\item[2)] These coordinates are, simultaneously, complex Darboux coordinates for the trans\-ver\-sal complex symplectic form, that is,
\begin{equation} \label{Om_transv}
\Omega_+ \equiv \frac{1}{2}(\hp\Omega_2 + i \hp \Omega_3)= du_{\I} \!\wedge dz^{\I} \mathrlap{.}
\end{equation}
\end{itemize}
The two conditions suffice to determine all three hyperk\"ahler symplectic forms \mbox{$\Omega_1$, $\Omega_2$, $\Omega_3$}, and hence also the hyperk\"ahler metric $G$. And indeed, one can verify that these are of extended Gibbons--Hawking type, with
{\allowdisplaybreaks
\begin{align}
U_{\I\J} & = - \frac{1}{2} L_{x^Ix^J}  \label{U_IJ_LT} \\
A_{\I} & = \Im (L_{x^Iz^J}dz^{\J}) + d\phi_{\I} \label{A_I_LT} \\
\intertext{and also $\smash{ \psi_{\I} = \Re u_{\I} - \phi_{\I} }$, where $\smash{ \phi_{\I} }$ is an arbitrary real shift. The last relation can be combined with the Legendre constraints to give} 
u_{\I} & = \psi_{\I} + \phi_{\I} + \frac{i}{2} L_{x^{\I}} \mathrlap{.} \label{u_I-LT}
\end{align}
}%
The differential constraints \eqref{L_constr_LT} guarantee moreover that these connection 1-forms and Higgs field satisfy as required the extended Bogomolny field equations. 

This construction is general, in the sense that any hyperk\"ahler manifold of extended Gibbons--Hawking type is locally given by such a Legendre transform construction for some potential $L$ (see Proposition 2.1 in \cite{MR1704547}).

\subsection{Hyperk\"ahler cone structure conditions}

\subsubsection{}

Let us work out now how hyperk\"ahler cone structures are described in this approach, or, in other words, how the conditions of subsection~\ref{sec:GH-HKC} translate in terms of the potential $L$. 

Consider the following two complex linear combinations of the generators of the collective $\smash{ \mathbb{H}^{\times} \npt /\mathbb{Z}_2 }$-action on the space \mbox{$\mathbb{R}^m \npt\otimes \mathbb{R}^3$}:
\begin{align} \label{cplx_Ls}
L_1 + i L_0 & = i \pt (\hp 2 \bar{z}^{\I} \partial_{\bar{z}^{\I}} + x^{\I} \partial_{x^{\I}}) \\[1pt]
L_2 + i L_3 & = i \pt (\hp 2 z^{\I} \partial_{x^{\I}} - x^{\I} \partial_{\bar{z}^{\I}} ) \mathrlap{.} \nonumber
\end{align}
On the domain of definition of $L$, the differential constraints \eqref{L_constr_LT} imply the relations
\begin{align}
(L_1 + i L_0)(L_{x^{\I}}) & = - \pt (L_2 - i L_3) (L_{\bar{z}^{\I}}) \\[0pt]
(L_2 + i L_3) (L_{x^{\I}}) & = \phantom{- \pt} (L_1 - i L_0)(L_{\bar{z}^{\I}}) \mathrlap{.} \nonumber
\end{align}

Observe now, on one hand, that from the expression \eqref{U_IJ_LT} for the Higgs field we have that
{\allowdisplaybreaks
\begin{align}
(L_1 + i L_0) \hp U_{\I\J} & = - \hp \frac{1}{2}\pt \partial_{x^{\I}} \partial_{x^{\J}} [\hp (L_1 + i L_0)(L) - i L \pt] - i \hp U_{\I\J} \\[-1pt]
(L_2 - i L_3) \hp U_{\I\J} & = - \hp \frac{1}{2} \pt \partial_{z^{\I}} \hp \partial_{x^{\J}} [\hp (L_1 + i L_0)(L) - i L \pt] \mathrlap{.} \nonumber \\
\intertext{The first equality is simply an identity. To obtain the second one we have made use of the differential constraints of $L$ as follows:}
(L_2 - i L_3) \hp U_{\I\J} 
& = - \pt \frac{1}{2} (L_2 - i L_3) \npt\underbracket[0.4pt][4pt]{ L_{x^{\I}x^{\J}} }_{\underset{\displaystyle \mathllap{- \pt} L_{z^{\I}\bar{z}^{\J}} } {\scalebox{0.8}[0.65]{\verteq}} }
= \mathrlap{ \frac{1}{2} \pt \partial_{z^{\I}} \!\nhp\underbracket[0.4pt][4pt]{ (L_2 - i L_3) (L_{\bar{z}^{\J}}) }_{\underset{\displaystyle \mathllap{- \pt} (L_1 + i L_0)(L_{x^{\J}}) } {\scalebox{0.8}[0.65]{\verteq}} } } \\[-2pt]
& = - \hp \frac{1}{2} \pt \partial_{z^{\I}} \partial_{x^{\J}} [\hp (L_1 + i L_0)(L) - i L \pt]  \nonumber
\end{align}
}%
where in the last step we have used the commutation property $\smash{ [L_1 + i L_0, \partial_{x^{\J}}] = - \hp i \hp \partial_{x^{\J}} }$. Note that both equalities hold in full generality, for any hyperk\"ahler space of extended Gibbons--Hawking type. In view of these considerations the criterion of Proposition~\ref{GH-HKC} can then be equivalently rephrased as follows:

\begin{proposition} \label{L-HKC}
The collective $\smash{ \mathbb{H}^{\times} \npt /\mathbb{Z}_2 }$-action on the base of an extended Gibbons--Hawking hyperk\"ahler space $N$ with potential $L$ can be lifted to a hyperk\"ahler cone structure on $N$ if and only if $L$ satisfies the condition
\begin{equation}
L_1(L) + i\hp ( L_0(L) - L) \in \, \bigcap\limits_{\I,\J} \, [\pt \ker(\hp \partial_{x^{\I}} \partial_{x^{\J}}) \cap \ker(\hp \partial_{z^{\I}} \partial_{x^{\J}}) ] \mathrlap{.}
\end{equation}
An immediate corollary is then that a sufficient set of constraints for that to happen is given by \cite{Ionas:2008gh}
\begin{equation} \label{HKC_constr_LT}
L_1(L) = 0
\qquad\text{and}\qquad
L_0(L) =  L \mathrlap{.}
\end{equation}
\end{proposition}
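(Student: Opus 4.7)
The plan is to derive the proposition as a direct corollary of the general Gibbons--Hawking cone criterion of Proposition~\ref{GH-HKC}, translated into the Legendre-transform language via the two identities established immediately above the statement.

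First I would repackage the symmetry constraints $\vec{L}\hp U_{\I\J} = \vec{0}$ and $L_0\hp U_{\I\J} = - U_{\I\J}$ of Proposition~\ref{GH-HKC} into the complex form
\begin{equation*}
(L_1 + i L_0) \hp U_{\I\J} + i\hp U_{\I\J} = 0
\qquad\text{and}\qquad
(L_2 - i L_3) \hp U_{\I\J} = 0,
\end{equation*}
obtained from the same combinations \eqref{cplx_Ls} used to define $L_1 + iL_0$ and $L_2 + iL_3$. Since $L_0, L_1, L_2, L_3$ are real operators and $U_{\I\J}$ is real-valued, matching real and imaginary parts is lossless, so this reformulation is a genuine equivalence.

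Next, denote $F \mathrel{:=} (L_1 + i L_0)(L) - i L = L_1(L) + i\hp (L_0(L) - L)$. The first identity recorded just above the statement,
\mbox{$(L_1 + i L_0) \hp U_{\I\J} = - \tfrac{1}{2}\pt \partial_{x^{\I}} \partial_{x^{\J}} F - i\hp U_{\I\J}$},
shows that the first condition above is equivalent to $\partial_{x^{\I}} \partial_{x^{\J}} F = 0$ for all $\I, \J$. Likewise, the second identity, \mbox{$(L_2 - i L_3) \hp U_{\I\J} = - \tfrac{1}{2}\pt \partial_{z^{\I}} \partial_{x^{\J}} F$}, shows that the second condition is equivalent to $\partial_{z^{\I}} \partial_{x^{\J}} F = 0$ for all $\I, \J$. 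Combining the two families of kernel conditions yields precisely the intersection condition stated in the proposition, which proves the main claim.

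For the corollary it suffices to observe that if $L_1(L) = 0$ and $L_0(L) = L$ then $F$ vanishes identically, so the kernel condition holds trivially. These conditions are visibly sufficient but strictly stronger than necessary, since $F$ could be a non-zero function lying in every one of the kernels---for instance, affine in the $x^{\I}$ variables with coefficients independent of the $z^{\J}$. I do not anticipate any substantive technical obstacle: the key computational work, namely the derivation of the two identities for $(L_1 + i L_0) U_{\I\J}$ and $(L_2 - i L_3) U_{\I\J}$, has already been carried out above the statement, and what remains is a clean repackaging followed by invocation of Proposition~\ref{GH-HKC}.
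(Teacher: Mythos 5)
Your proposal is correct and follows essentially the same route as the paper: the paper derives the two identities for $(L_1+iL_0)\hp U_{\I\J}$ and $(L_2-iL_3)\hp U_{\I\J}$ and then simply declares the proposition to be the rephrasing of Proposition~\ref{GH-HKC} "in view of these considerations," which is exactly the repackaging you carry out explicitly. Your added observation that the real/imaginary split is lossless because $U_{\I\J}$ is real and the $L_a$ are real operators is the one small point the paper leaves implicit, and it is stated correctly.
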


On another hand, from the Legendre transform approach expression for the connection 1-forms $A_{\I}$ we obtain the following identities
{\allowdisplaybreaks
\begin{align}
\iota_{L_1 + iL_0} A_{\I} + U_{\I\J} (x^{\J}_1 + i \hp x^{\J}_0) & = (L_1 + iL_0)(\phi_{\I} + \frac{i}{2} L_{x^{\I}}) \\[-1pt]
\iota_{L_2 + iL_3} A_{\I} + U_{\I\J} (x^{\J}_2 + i \hp x^{\J}_3) & = (L_2+ iL_3)(\phi_{\I} + \frac{i}{2} L_{x^{\I}}) \nonumber
\end{align}
}%
where, of course, $\smash{x^{\J}_0  = 0}$. We stress that these are indeed identities, with no constraints whatsoever being needed for their derivation. In the light of formula \eqref{u_I-LT} it is clear then that

\begin{lemma} \label{u-kers-lem}
The connection 1-forms \eqref{A_I_LT} satisfy the gauge-fixing condition \eqref{gauge-fix} if and only if there exist shifts $\phi_{\I}$ such that
\begin{equation} \label{u-kers}
u_{\I}  \in \ker(L_1 + iL_0) \cap \ker(L_2+ iL_3)
\end{equation}
for all values of {\small $I$}.
\end{lemma}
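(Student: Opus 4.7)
The plan is to exploit the two identities displayed immediately before the lemma, which hold unconditionally, and to convert them into statements about $u_{\I}$. First I would rewrite the gauge-fixing condition \eqref{gauge-fix} componentwise in terms of the four generators $L_a$ and then assemble the components into complex combinations matching $L_1 + iL_0$ and $L_2 + iL_3$. Since $x^{\J}_0 = 0$, these read
\begin{align*}
\iota_{L_1 + iL_0} A_{\I} + U_{\I\J}(x^{\J}_1 + i\pt x^{\J}_0) & = 0 \\
\iota_{L_2 + iL_3} A_{\I} + U_{\I\J}(x^{\J}_2 + i\pt x^{\J}_3) & = 0
\end{align*}
and are therefore precisely the left-hand sides of the identities in question. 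So the gauge-fixing holds if and only if the right-hand sides $(L_1 + iL_0)(\phi_{\I} + \tfrac{i}{2} L_{x^{\I}})$ and $(L_2 + iL_3)(\phi_{\I} + \tfrac{i}{2} L_{x^{\I}})$ both vanish.

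The one small step remaining is to convert these vanishing conditions into conditions on $u_{\I}$ rather than on $\phi_{\I} + \tfrac{i}{2}L_{x^{\I}}$. Here the key observation is that the vector fields $L_a$ defined in \eqref{the_Ls} and \eqref{cplx_Ls} are supported entirely on the base $S \subset \mathbb{R}^m \otimes \mathbb{R}^3$, whereas $\psi_{\I}$ are coordinates on the $\mathbb{R}^m$-fibers; consequently $L_a(\psi_{\I}) = 0$ for every $a$ and every {\small $I$}. Recalling the explicit formula \eqref{u_I-LT}, namely $u_{\I} = \psi_{\I} + \phi_{\I} + \tfrac{i}{2}L_{x^{\I}}$, this implies that
\begin{equation*}
(L_1 + iL_0) u_{\I} = (L_1 + iL_0)(\phi_{\I} + \tfrac{i}{2}L_{x^{\I}}),
\end{equation*}
and similarly for $L_2 + iL_3$. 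The two previously derived vanishing conditions are therefore equivalent to $(L_1 + iL_0) u_{\I} = 0$ and $(L_2 + iL_3) u_{\I} = 0$, which is exactly the membership condition \eqref{u-kers}. Running the equivalences in both directions\,---\,which is legitimate since nothing above required any hypothesis beyond the unconditional identities and the fiber-coordinate nature of $\psi_{\I}$\,---\,closes the biconditional.

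There is no genuine obstacle here: the substantive work has already been carried out in establishing the two pre-lemma identities, which themselves rest on the Legendre transform formulas \eqref{U_IJ_LT}--\eqref{u_I-LT} and the differential constraints \eqref{L_constr_LT}. The only subtlety worth flagging is ensuring that the quantifier structure is handled correctly: the lemma claims the existence of shifts $\phi_{\I}$, and indeed the condition \eqref{u-kers} implicitly fixes $\phi_{\I}$ through $u_{\I}$, so the equivalence is between ``there exist $\phi_{\I}$ making $A_{\I}$ satisfy \eqref{gauge-fix}'' and ``there exist $\phi_{\I}$ (the same ones) such that the corresponding $u_{\I}$ lies in the stated intersection of kernels.''
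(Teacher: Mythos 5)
Your proof is correct and follows exactly the route the paper intends: the paper's own "proof" is the one-line remark that the lemma is "clear" from the two unconditional identities displayed just before it together with formula \eqref{u_I-LT}, and your write-up simply spells out that chain (gauge-fixing $\Leftrightarrow$ vanishing of the right-hand sides of the identities $\Leftrightarrow$ $u_{\I}$ annihilated by the two complex combinations, using $L_a\psi_{\I}=0$). Nothing to add.
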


\subsubsection{}

Let us assume now that the function $L$ satisfies the linear differential constraints \eqref{HKC_constr_LT}. One can then check that with these conditions its Legendre transform is both invariant at collective rotations and scales with weight $1$ under the action of $L_0$. That is to say, the Legendre transform gives us in this case not just a K\"ahler potential, but in fact a hyperk\"ahler potential. Accordingly, we write
\begin{equation}
\kappa = U \mathrlap{.}
\end{equation}
As we have discussed at the end of \S\,\ref{ssec:collective}, when the gauge-fixing condition \eqref{gauge-fix} holds, the collective $\smash{ \mathbb{H}^{\times} \npt /\mathbb{Z}_2 }$-action on $S$ lifts trivially to the hyperk\"ahler cone as its standard quaternionic action. In our case it can be shown that its (complexified) generators assume in holomorphic coordinates the form  
\begin{align} \label{cplx_Ls_hol}
L_1 + i L_0 & = 2 \hp i \pt \bar{z}^{\I} \frac{\partial}{\partial \bar{z}^{\I}} \\
L_2 + i L_3 & = \frac{\partial U}{\partial \bar{u}_{\I}} \frac{\partial}{\partial \bar{z}^{\I}}  - \frac{\partial U}{\partial \bar{z}^{\I}} \frac{\partial}{\partial \bar{u}_{\I}} \mathrlap{.} \nonumber
\end{align}

\subsubsection{}

So far, what we have done was simply to rephrase the description of a hyperk\"ahler cone structure on an extended Gibbons--Hawking space in the language of the Legendre transform approach. Ultimately, though, we are interested in determining the quaternionic K\"ahler metric on the base of the hyperk\"ahler cone. To achieve that, we combine the results of this and the previous section into the following procedure: 
\begin{itemize}
\setlength\itemsep{0.2em}

\item[1.] Given a function $L$ satisfying the differential conditions \eqref{L_constr_LT}, usually although not necessarily in the form of a contour integral, one should first test whether it satisfies also the conditions of Proposition~\ref{L-HKC}. In case it does, one is then guaranteed that the ensuing metric through the Legendre transform construction will be of hyperk\"ahler cone type. 

\item[2.] The second partial derivatives of $L$ with respect to the $x^{\I}$-variables give the Higgs field $U_{\I\J}$ via the formula \eqref{U_IJ_LT}. 

\item[3.] More challengingly, one must also solve the differential conditions of Lemma~\ref{u-kers-lem} to find shift functions $\phi_{\I}$ for which the connection 1-forms $A_{\I}$ satisfy the gauge-fixing condition \eqref{gauge-fix}. As we shall see in the next example, depending on the particularities of the case, additional symmetry considerations might need to be taken into account as well.

\item[4.] Once the Higgs field and the appropriately gauge-fixed connection 1-forms on the hyperk\"ahler cone  are determined, one should then substitute in them the change of variables \eqref{x_dec} for some choice of restricted configuration $\vec{\rho}^{\,\I}$. The outcomes should be amenable to the form \eqref{U_IJ_dec} and \eqref{A_K_dec}, respectively, from  which one can then read off the reduced Higgs field $\mathscr{U}_{\I\J}$ and induced connection 1-forms $\mathscr{A}_{\I}$. This is effectively a quotienting procedure which takes us from the hyperk\"ahler cone down to its base. 

\item[5.] The Ansatz of Theorem~\ref{QK-GH-analog}, or any one of the equivalent Ans\"atze listed in Table~\ref{alt_exprs}, gives us eventually the quaternionic K\"ahler metric and structure on the base. 
\end{itemize}

\section{An example: the local c-map} \label{sec:c-map}

As an application of this procedure and, more generally, of the ideas that we have developed in this article, we present an explicit construction of a quaternionic K\"ahler metric known as the Ferrara--Sabharwal metric \cite{Ferrara:1989ik}. This metric emerged in physics from the construction known as the \textit{local c-map} (or, sometimes, the \textit{projective} or \textit{supergravity c-map}) \cite{Cecotti:1988qn}, a map derived from a duality of the moduli spaces of type IIA and type IIB string theories, which yields a quaternionic K\"ahler manifold of real dimension \mbox{$4n$} for each projective special K\"ahler manifold of real dimension \mbox{$2n-2$}. The approach we pursue was initiated by Ro\v{c}ek, Vafa and Vandoren in \cite{Rocek:2005ij, Rocek:2006xb}. The idea is as follows: the $4n\pt$-dimensional quaternionic K\"ahler manifold has an associated $4n+4\pt$-di\-men\-sio\-nal Swann bundle which, in turn, as a hyperk\"ahler space, has a $4n+6\pt$-dimensional twistor space. The twistor space, these authors posit, is characterized as a complex space by a certain holomorphic gluing function (in physics terms, this is related to a projective superspace lagrangian density). The challenge is then to extract the metric from this holomorphic function by coming down first two real dimensions, to describe the geometry of the Swann bundle, and then another four real dimensions, to the quaternionic K\"ahler base. Here we fill in a number of critical details in the arguments of \cite{Rocek:2005ij, Rocek:2006xb}, which were further developed in \cite{Neitzke:2007ke}, and use our previous considerations to perform what these papers refer to as a \textit{superconformal quotient}\,---\,\textit{i.e.}~the descent from the hyperk\"ahler cone to the quaternionic K\"ahler base\,---\,explicitly as well as equivariantly.

\subsubsection{}

In close analogy with the Legendre transform construction of the rigid c-map metric (also known as the affine c-map or semi-flat metric), the $L$-po\-ten\-tial of the Swann bundle $N_c$ over the quaternionic K\"ahler image of the local c-map was shown in \cite{Rocek:2005ij, Rocek:2006xb} to be given by the following contour integral:
\begin{equation}
L = \frac{1}{2\pi} \oint \frac{d\zeta}{\zeta}  \scalebox{0.8}[1]{\bigg(} \frac{F(\eta(\zeta))}{\eta^0(\zeta)} - \frac{\bar{F}(\eta(\zeta))}{\eta^0(\zeta)}  \scalebox{0.8}[1]{\bigg)}
\end{equation}
where
\begin{itemize}
\setlength\itemsep{0.2em}

\item  the $\eta^{\I}(\zeta)$ variables are so-called tropical components of sections of an $\mathcal{O}(2)$ line bundle over the twistor space of the Swann bundle, that is,
\begin{equation*}
\eta^{\I}(\zeta) = \frac{z^{\I}}{\zeta}  + x^{\I} - \bar{z}^{\I}\zeta \mathrlap{;}
\end{equation*}

\item $\smash{ F(\eta(\zeta)) \equiv F(\eta^1(\zeta),\dots,\eta^n(\zeta)) }$ is a holomorphic function called \textit{prepotential}, assumed to be homogeneous of degree two in its variables;

\item the integration contour wraps around the two roots of $\smash{ \eta^0(\zeta) }$: anti-clockwise around $\smash{ \zeta^0_+ }$ for the first term, and clockwise around $\smash{ \zeta^0_- }$ for the second one, where 
\begin{equation*}
\zeta^0_{\pm} = \frac{x^0 \mp r^0}{2\bar{z}^0} = - \frac{2z^0}{x^0 \pm r^0} \mathrlap{,}
\end{equation*}
with $\smash{ r^0 = |\vec{x}^{\pt 0}| }$, correspond to antipodally opposite points on the twistor Riemann sphere.
\end{itemize}
The integrand can be understood as a  twistor space holomorphic symplectic gluing function \cite{MR877637}. Its holomorphic dependence on $\mathcal{O}(2)$ sections guarantees that the contour integral satisfies automatically the generalized Laplace equations \eqref{L_constr_LT}. Moreover, the specific manner in which it depends on these\,---\,namely, the fact that it only depends on $\zeta$ \textit{implicitly} through the variables $\eta^{\I}(\zeta)$ and the fact that it scales with weight $1$ at a scaling of the $\eta^{\I}(\zeta)$'s\,---\,implies that the integral satisfies the two constraints \eqref{HKC_constr_LT}, and it is thus indeed, as claimed, the $L$-potential of a hyperk\"ahler cone. 

The contour integral can be computed explicitly with Cauchy's residue theorem. Using the homogeneity property of the prepotential we obtain
\begin{equation} \label{L_c-map}
L =  2\hp r^0 \Im F(\chi)
\end{equation}
where, by definition,
\begin{equation} \label{the_chi's}
\chi^{\A} = \frac{\eta^{\A}(\zeta^0_+)}{r^0}
\qquad \text{and} \qquad
\bar{\chi}^{\A} = \frac{\eta^{\A}(\zeta^0_-)}{r^0} \mathrlap{.}
\end{equation}
Note that the twistor sections $\smash{ \zeta \mapsto \eta^{\A}(\zeta) }$ take antipodally conjugated variables to complex conjugated ones. Here and throughout this section we use the following index notation conventions:
\begin{equation} \label{index_conv}
\def\arraystretch{1.30} 
\begin{tabular}{ll}
Indices & Range \\  \hline
{\small $I$,\,$J$, \dots} & $0, \dots, n$ \\
{\small $A$,\,$B$, \dots} & $1, \dots, n$ \\[2pt] 
\end{tabular}
\end{equation}

\subsubsection{}

Before we continue, we pause for a moment to list for future reference a number of properties that these variables satisfy. First, direct evaluation gives us the expression 
\begin{equation}
\chi^{\A} = \frac{x^{\A}}{r^0} - \frac{x^0}{r^0} \Re \frac{z^{\A}}{z^0} - i  \Im \frac{z^{\A}}{z^0} \mathrlap{.}
\end{equation}
This can be equivalently recast in the vectorial form
{\allowdisplaybreaks
\begin{align}
\chi^{\A}
 & = \frac{(\hp \vec{x}^{\pt 0} \nhp\vprod \vec{x}^{\hp\A}) \sprod (\hp \vec{x}^{\pt 0} \nhp\vprod\pt \mathbf{i} \pt) - i \, |\vec{x}^{\pt 0}| \, \vec{x}^{\pt 0}  \sprod (\hp \vec{x}^{\hp\A} \nhp\vprod\pt \mathbf{i} \pt) }{|\vec{x}^{\pt 0}| | \pt \vec{x}^{\pt 0} \nhp\vprod\pt \mathbf{i} \pt|^2} \mathrlap{.} \\
\intertext{Proceeding in the same vein, we can then show that the following two properties hold:}
\chi^{\A} \bar{\chi}^{\B}
 & = \frac{(\hp \vec{x}^{\pt 0} \nhp\vprod \vec{x}^{\hp\A}) \sprod (\hp \vec{x}^{\pt 0} \nhp\vprod \vec{x}^{\hp\B}) - i \, |\vec{x}^{\pt 0}| \, \vec{x}^{\pt 0}  \sprod (\hp \vec{x}^{\hp\A} \nhp\vprod \vec{x}^{\hp\B}) }{| \pt \vec{x}^{\pt 0} \nhp\vprod\pt \mathbf{i} \pt|^2} \label{chi-chibar} \\
\frac{\chi^{\A}}{\chi^{\B}} 
 & = \frac{(\hp \vec{x}^{\pt 0} \nhp\vprod \vec{x}^{\hp\A}) \sprod (\hp \vec{x}^{\pt 0} \nhp\vprod \vec{x}^{\hp\B}) - i \, |\vec{x}^{\pt 0}| \, \vec{x}^{\pt 0}  \sprod (\hp \vec{x}^{\hp\A} \nhp\vprod \vec{x}^{\hp\B}) }{| \pt \vec{x}^{\pt 0} \nhp\vprod \vec{x}^{\hp\B}|^2} \label{chi/chi} \mathrlap{.}
\end{align}
}%
The last one, in particular, makes it clear that ratios of $\chi^{\A}$'s are invariant under collective transformations and thus descend naturally to functions on the base of the hyperk\"ahler cone. 

In order to present the next property in a concise manner it will be convenient to introduce a number of temporarily notations. Thus, if we denote the parameters of the $\mathcal{O}(2)$-bundle sections by $\smash{ \eta^{\I}_{+1} = z^{\I} }$, $\smash{ \eta^{\I}_{0} = x^{\I} }$, $\smash{ \eta^{\I}_{-1} = - \pt \bar{z}^{\I} }$ such that we have $\smash{ \eta^{\I}(\zeta) = \sum_{m=-1}^1 \eta^{\I}_m \hp \zeta^{-m} }$ (this is sometimes called the complex spherical basis), then in terms of these the various derivatives of $\smash{ \chi^{\A} }$ can be summarized in the formula
\begin{equation} \label{chi_diff}
d\chi^{\A} = \frac{1}{r^0} \sum_{m=-1}^1 (\zeta^0_+)^{-m} \Big[\hp d\eta^{\A}_m + \Big(m \chi^{\A} - \frac{\vec{x}^{\pt 0} \npt\sprod \vec{x}^{\hp\A}}{\vec{x}^{\pt 0} \npt\sprod \vec{x}^{\pt 0}} \Big) d\eta^0_m \Big] \mathrlap{.}
\end{equation}

In view of the imminent application of Lemma~\ref{u-kers-lem} it would be useful to have a list of elementary functions belonging to the intersection of kernels from the condition \eqref{u-kers}. And indeed, such a list exists and reads as follows:
\begin{equation} \label{phi-build-list}
\bigg\{ \psi_0,\, \psi_{\A},\, \frac{z^{\A}}{z^0},\, \frac{x^0 + r^0}{2r^0} \chi^{\A},\, \frac{x^0 - r^0}{2 r^0} \bar{\chi}^{\A} \bigg\} \subset \ker(L_1 + iL_0) \cap \ker(L_2+ iL_3) \mathrlap{.}
\end{equation}
For the first three elements membership in the list follows immediately from a cursory survey of the formulas \eqref{cplx_Ls}. For the last two, however, this is a little less obvious. One can verify it directly, which is rather tedious, or by observing successively that
{\allowdisplaybreaks
\begin{alignat}{2}
& (L_1 + iL_0)(\zeta^0_{\pm}) \pt = - i \hp \zeta^0_{\pm} &\qquad\qquad& (L_2 + iL_3)(\zeta^0_{\pm}) \pt = i \hp (\zeta^0_{\pm})^2 \\
\intertext{and, using these facts in the definitions \eqref{the_chi's}, that}
& (L_1 + iL_0)(\chi^{\A}) = 0  && (L_2 + iL_3)(\chi^{\A}) = i \hp \zeta^0_+ \hp \chi^{\A} \\
& (L_1 + iL_0)(\bar{\chi}^{\A}) = 0 && (L_2 + iL_3)(\bar{\chi}^{\A}) = i \hp \zeta^0_- \hp \bar{\chi}^{\A} \mathrlap{.} \nonumber
\end{alignat}
}%
The remaining gap in the verification can then be bridged by a very simple calculation. 

Lastly, let us also make a note of the following identity:
\begin{equation} \label{psi-tilde-ident}
\frac{z^{\A}}{z^0} + \frac{x^0 + r^0}{2r^0} \chi^{\A} + \frac{x^0 - r^0}{2 r^0} \bar{\chi}^{\A} = \frac{\vec{x}^{\pt 0} \npt\sprod \vec{x}^{\hp\A}}{\vec{x}^{\pt 0} \npt\sprod \vec{x}^{\pt 0}} \mathrlap{.}
\end{equation}
The manifestly invariant expression on the right-hand side, the same one which shows up in the differentiation formula \eqref{chi_diff}, will play an important role in the considerations to come.

\subsubsection{}

In order to compute the Legendre transform of $L$ one needs to compute first its partial derivatives with respect to the variables $x^{\I}$. The components with \mbox{$m=0$} of the differentiation formula \eqref{chi_diff} give us promptly, on one hand,
{\allowdisplaybreaks
\begin{align} 
& L_{x^{\A}} =  2\Im F_{\A}(\chi) \label{L_x^A_c-map} \\
\intertext{and on the other,}
& (\pt \vec{x}^{\pt 0} \npt\sprod \vec{x}^{\pt\I})  L_{x^{\I}} = x^0 L \mathrlap{.} \label{L-prop}
\end{align}
}%
This last relation can be used to determine $L_{x^0}$. With the help of these formulas one can then show that the Legendre transform of $L$ yields the hyperk\"ahler potential
{\allowdisplaybreaks
\begin{align} 
U & = - \frac{4 |z^0|^2}{r^0} \Im [\pt \bar{\chi}^{\A}F_{\A}(\chi)] \mathrlap{.} \label{U=chi_expr} \\
\intertext{In addition, by using the homogeneity property of the prepotential to replace $\smash{ F_{\A}(\chi) }$ with $\smash{ F_{\A\B}(\chi)\hp \chi^{\B} }$, and then the formula \eqref{chi-chibar}, we get also the alternative expression}
U & = \mathrlap{ - \frac{(\hp \vec{x}^{\pt 0} \vprod \vec{x}^{\hp\A}) \sprod (\hp \vec{x}^{\pt 0} \vprod \vec{x}^{\hp\B})}{|\hp\vec{x}^{\pt 0}|^3}  \Im F_{\A\B} }\label{U_c-inv}
\end{align}
}%
where it is understood that $F_{\A\B}$ refers to $F_{\A\B}(\chi)$. 

One calls \textit{dualization} the result of replacing the prepotential function $F$ in the definition of $L$ with (minus) its Legendre transform. In the ensuing formulas, this operation results in the replacement of $F(\chi)$ with $\smash{ F(\chi) - \chi^{\A}F_{\A}(\chi) }$, of $\chi^{\A}$ with $\smash{ F_{\A}(\chi) }$, and of the latter with $\smash{  -\pt\chi^{\A} }$; similar statements hold also for the complex conjugated quantities. On the other hand, the zero-indexed variables such as $x^0$, $z^0$, $\bar{z}^0$ are inert under dualization. 

The first expression above for the hyperk\"ahler potential is the one best suited for an analysis of its dualization properties. From it, one can see immediately that the hyperk\"ahler potential is self-dual. This strongly suggests that one might be able to lift dualization to a symmetry of the metric, which, we shall see, will indeed be the case. 

The second expression of the hyperk\"ahler potential brings instead to the fore its collective transformation properties. Note that since the function $\smash{ F_{\A\B}(\chi) }$ is homogeneous of degree zero, and since ratios of $\chi^{\A}$'s are invariant under collective transformations, it follows that so is the Hessian $\smash{ F_{\A\B}(\chi) }$. It is then evident that the hyperk\"ahler potential is, as required, invariant under collective rotations and scales with weight $1$ under the action of $L_0$.

\subsubsection{}

Following \cite{Neitzke:2007ke}, let us consider on $N_c$ the vector fields
{\allowdisplaybreaks
\begin{align} \label{QPIW-defs}
Q^{\A} \npt & = \frac{\partial}{\partial u_{\A}} +  \textrm{\ c.c.} 
\hspace{82pt}
I = \frac{\partial}{\partial u_0} + \textrm{\ c.c.} \\
P_{\A} & = z^0\frac{\partial}{\partial z^{\A}} - u_{\A}\frac{\partial}{\partial u_0} + \textrm{\ c.c.}  \nonumber \\
W  & = 2 z^0\frac{\partial}{\partial z_0} + z^{\A} \frac{\partial}{\partial z^{\A}} - 2 u_0 \frac{\partial}{\partial u_0} - u_{\A} \frac{\partial}{\partial u_{\A}} + \textrm{\ c.c.} \nonumber
\end{align}
}%
where \mbox{c.c.} means the complex conjugate of the preceding expression. They are manifestly real-holomorphic with respect to the hyperk\"ahler complex structure $I_1$ and, moreover, satisfy the following \textit{graded Heisenberg algebra}:
{\allowdisplaybreaks
\begin{gather} \label{H-berg}
[P_{\A},Q^{\B}] = \delta_{\A}^{\B} I \\[1pt]
[P_{\A},P_{\B}] = 0 
\qquad\qquad
[Q^{\A},Q^{\B}] = 0 \nonumber \\[1pt]
[W,P_{\A}] = P_{\A} 
\qquad\qquad
[W,Q^{\A}] = Q^{\A} 
\qquad\qquad
[W,I \pt ] = 2 I \mathrlap{.} \nonumber
\end{gather}
}%

Observe now that, out of these, the vector fields $Q^{\A}$ and $I$ are entirely vertical with respect to the $\mathbb{R}^{n+1}$-fibration structure. On the other hand,  the vector fields $P_{\A}$ and $W$ have non-trivial horizontal components given by the horizontal lifts of the vector fields 
{\allowdisplaybreaks
\begin{alignat}{2}
& \mathcal{P}_{\A} = \vec{x}^{\pt 0} \nhp\sprod \frac{\partial}{\partial \vec{x}^{\hp\A}} & \qquad \text{and} \qquad & \mathcal{W} = 2\pt \vec{x}^{\pt 0} \nhp\sprod \frac{\partial}{\partial \vec{x}^{\pt 0}} + \mathrlap{\vec{x}^{\hp\A} \nhp\sprod \frac{\partial}{\partial \vec{x}^{\hp\A}} }\\
\intertext{living on the base of the $\mathbb{R}^{n+1}$-fibration. Note incidentally that by resorting to the contour-integral representation of $L$ one can easily see that we have}
& \mathcal{P}_{\A}(L) = 0 & \qquad \text{and} \qquad & \mathcal{W}(L) = 0 \mathrlap{.} \\
\intertext{Furthermore, from the definition of the $\chi^{\A}$-variables we get successively:}
& P_{\A}(\chi^{\B}) = \mathrlap{ \mathcal{P}_{\A}(\chi^{\B}) = \frac{\mathcal{P}_{\A}(\eta^{\B}(\zeta^0_+))}{r^0} = \frac{\delta^{\B}_{\A} \, \eta^0(\zeta^0_+) }{r^0} = 0 } \\
& W(\chi^{\A}) = \mathrlap{ \mathcal{W}(\chi^{\A}) = - \hp\chi^{\A} \mathrlap{.} } \nonumber
\end{alignat}
}%
Corresponding statements hold for the complex conjugated variables as well.

In light of these facts one can then readily infer using the expression \eqref{U=chi_expr} for the hyperk\"ahler potential that this is preserved by all of the generators of the graded Heisenberg algebra: 
\begin{equation} \label{HK-pot_inv}
Q^{\A}(U) = 0 
\qquad\qquad 
I(U) = 0
\qquad\qquad
P_{\A}(U) = 0 
\qquad\qquad 
W(U) = 0 \mathrlap{.}
\end{equation}

Let us recall now the following general result: 

\begin{lemma} \label{plurih}
A real-holomorphic vector field on a K\"ahler manifold preserves the K\"ahler form if and only if it preserves locally K\"ahler potentials up to pluriharmonic functions.
\end{lemma}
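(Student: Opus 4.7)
The plan is to work with the standard characterization that, on a Kähler manifold $(M, J, \omega)$, a local Kähler potential is a real-valued function $K$ with $\omega = \tfrac{1}{2} dd^c K$, where $d^c$ is the twisted exterior derivative defined by $d^c = -\,d \circ J$ on forms (equivalently $d^c = i(\bar\partial - \partial)$ on the complexified algebra). Pluriharmonic functions are then precisely those annihilated by $dd^c$. The key observation I would exploit is that, for a \emph{real-holomorphic} vector field $X$, meaning $\mathcal{L}_X J = 0$, the Lie derivative commutes with $d^c$. Indeed, $\mathcal{L}_X$ always commutes with $d$, and the identity $[\mathcal{L}_X, J] = 0$ promotes this to $[\mathcal{L}_X, d^c] = 0$ through the very definition of $d^c$.

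For the forward direction, assume $\mathcal{L}_X \omega = 0$ and pick any local Kähler potential $K$ on an open set $U \subset M$. Then
\begin{equation*}
0 = \mathcal{L}_X \omega = \tfrac{1}{2}\,\mathcal{L}_X dd^c K = \tfrac{1}{2}\,dd^c (\mathcal{L}_X K) = \tfrac{1}{2}\,dd^c(XK),
\end{equation*}
so $XK$ is pluriharmonic on $U$. This is the infinitesimal expression of "$X$ preserves $K$ up to a pluriharmonic function": along the flow of $X$, the potential shifts only by a pluriharmonic piece, which is invisible to $\omega$. For the converse, suppose that for every (or even just one) local potential $K$ the derivative $XK$ is pluriharmonic. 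Running the above calculation backwards yields $\mathcal{L}_X \omega = \tfrac{1}{2}\,dd^c(XK) = 0$ on $U$, and since $\omega$ is globally defined and the open sets with local potentials cover $M$, we get $\mathcal{L}_X \omega = 0$ everywhere.

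A small auxiliary point worth recording, to make the statement well-posed, is that any two local Kähler potentials $K_1, K_2$ for the same $\omega$ on an overlap satisfy $dd^c(K_1 - K_2) = 0$, so their difference is pluriharmonic; hence the property "$XK$ is pluriharmonic" is independent of the choice of local potential, and the characterization is intrinsic to $X$ and $\omega$.

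The argument is essentially a one-line exchange of operators, so there is no genuine obstacle beyond pinning down conventions; the most delicate point is simply verifying the commutation $[\mathcal{L}_X, d^c] = 0$ from $\mathcal{L}_X J = 0$, which is a direct calculation using that $d^c \alpha = (-1)^{|\alpha|} d(\alpha \circ J) \circ J^{-1}$ or any equivalent definition. Once that identity is in hand, both implications drop out symmetrically from the local $dd^c$-lemma-style identity above.
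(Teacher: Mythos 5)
Your proof is correct. Note that the paper does not actually prove this lemma --- it is stated as a recalled general fact and used as a black box --- so there is no in-paper argument to compare against; your argument (writing $\omega = \tfrac{1}{2}\hp dd^{\hp c}K$ locally, observing that $\mathcal{L}_X$ commutes with $d^{\hp c}$ on functions precisely because $\mathcal{L}_X J = 0$, and hence $\mathcal{L}_X\omega = \tfrac{1}{2}\hp dd^{\hp c}(XK)$) is the standard one and establishes both directions cleanly. Your closing remark that the difference of two local potentials is itself pluriharmonic, so the condition is independent of the choice of potential, is exactly the right well-posedness check to include.
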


\noindent A hyperk\"ahler manifold can be viewed as a K\"ahler manifold with respect to any one of its hyperk\"ahler complex structures. In our case, each one of the vector fields $Q^{\A}$, $P_{\A}$, $I$ and $W$ satisfies the conditions of this Lemma when one regards the hyperk\"ahler space $N_c$ as a K\"ahler space with respect to the complex structure $I_1$, with K\"ahler form $\Omega_1$. Indeed, they are real-holomorphic with respect to $I_1$ and, as we have shown, preserve the  hyperk\"ahler potential, which is in particular a K\"ahler potential for $\Omega_1$. By the Lemma, they all must then preserve as well the K\"ahler form $\Omega_1$. Moreover, a straightforward check using their defining formulas shows that they preserve also both the transversal symplectic form \eqref{Om_transv} and its complex conjugate. We are then entitled to conclude that $Q^{\A}$, $P_{\A}$, $I$ and $W$ are all tri-Hamiltonian vector fields. 

What is more, they also commute with the generators of the standard quaternionic action on the hyperk\"ahler cone. This can be seen using the holomorphic coordinate expressions \eqref{QPIW-defs} and \eqref{cplx_Ls_hol}, while taking also into account, once again, the invariance properties \eqref{HK-pot_inv} of the hyperk\"ahler potential function $U$. By Theorem~\ref{Sw-symm} they descend therefore to the quaternionic K\"ahler base.

These considerations suggest that in holomorphic coordinates the dualization map takes the form 
\begin{alignat}{2}
& \tilde{u}^{\A} = \frac{z^{\A}}{z^0} &\qquad\qquad& \tilde{z}_{\A} = - z^0 u_{\A} \\[-2pt]
& \tilde{u}_{0} = u_0 + \frac{z^{\A}}{z^0} u_{\A} && \tilde{z}^{0} = z^0 \mathrlap{.} \nonumber
\end{alignat}
Indeed, this represents a holomorphic symplectomorphism with respect to the complex symplectic form $\Omega_+$, which preserves moreover the graded Heisenberg algebra by mapping $Q^{\A}$ to $P_{\A}$ and the latter to $-\pt Q^{\A}$ while keeping $I$ and $W$ unchanged. Notice also that the double dual of a non-zero-indexed coordinate returns \textit{minus} that coordinate, whereas the double dual of a zero-indexed one returns it trivially, without any sign change.

\subsubsection{}

In order to be able to compute explicitly the connection 1-forms $A_{\I}$ we need to choose a set of shift functions $\phi_{\I}$ for which the conditions of Lemma~\ref{u-kers-lem} are satisfied. This is perhaps the most difficult step in the construction of a hyperk\"ahler cone in the Legendre transform approach. Fortunately, in our case we can use the duality symmetry as a guide. In other words, rather than solve these conditions directly, in what follows we shall exploit duality to find a convenient set of such functions, and then verify \textit{a posteriori} that they provide indeed a solution to the required conditions. 

Note to begin with, that in view of the expression \eqref{L_x^A_c-map}, the Legendre transform construction formula \eqref{u_I-LT} takes in this case for non-zero values of the index the form
\begin{equation}
u_{\A} = \psi_{\A} + \phi_{\A} + i \Im F_{\A}(\chi) \mathrlap{.}
\end{equation}
Dualization then yields 
\begin{equation}
\tilde{\psi}^{\A} + \tilde{\phi}^{\A} = \Re \frac{z^{\A}}{z^0} = \frac{\vec{x}^{\pt 0} \npt\sprod \vec{x}^{\hp\A}}{\vec{x}^{\pt 0} \npt\sprod \vec{x}^{\pt 0}} - \frac{x^0}{r^0} \Re \chi^{\A} 
\end{equation}
where the second equality is the real part of the identity \eqref{psi-tilde-ident}. This shows that if we take
\begin{equation} \label{shift_A}
\phi_{\A} = \frac{x^0}{r^0} \Re F_{\A}(\chi)
\qquad\text{such that}\qquad
\tilde{\phi}^{\A} = - \frac{x^0}{r^0} \Re \chi^{\A}
\end{equation}
then we get \cite{Neitzke:2007ke}
\begin{equation} \label{psi-tld}
\tilde{\psi}^{\A} = \frac{\vec{x}^{\pt 0} \npt\sprod \vec{x}^{\hp\A}}{\vec{x}^{\pt 0} \npt\sprod \vec{x}^{\pt 0}}
\end{equation}
which is manifestly invariant under collective transformations, in agreement with the expectation that dualization should commute with the hyperk\"ahler cone quaternionic action. For this choice the initial formula becomes
\begin{equation}
u_{\A} = \psi_{\A} + \frac{x^0 + r^0}{2r^0} F_{\A}(\chi) + \frac{x^0 - r^0}{2 r^0} \hp \bar{F}_{\A}(\bar{\chi}) 
\end{equation}
that is, precisely the dual of the identity \eqref{psi-tilde-ident}. Since the functions $F_{\A}$ are holomorphic and homogeneous of degree 1, then in light of the list \eqref{phi-build-list} it is clear that these $u_{\A}$'s satisfy the condition of Lemma~\ref{u-kers-lem}. This check validates the above choice of shift functions $\phi_{\A}$. 


On another hand, let us observe that the relation \eqref{L-prop} can be equivalently stated as
\begin{equation}
\Im (u_0 + \tilde{\psi}^{\A} u_{\A}) = \frac{x^0}{2(r^0)^2} \hp L \mathrlap{.}
\end{equation}
By noticing then that the function $L$ is anti-self-dual, \mbox{\textit{i.e.}} that \mbox{$\tilde{L} = - L$}, we can infer from this that  
$\smash{ \Im (u_0 + \tilde{u}_0 + \tilde{\psi}^{\A} u_{\A} - \psi_{\A} \tilde{u}^{\A}) = 0 }$.
This suggests that we choose a shift $\phi_0$ by setting
\begin{equation}
\psi_0 = \frac{1}{2} (u_0 + \tilde{u}_0 + \tilde{\psi}^{\A} u_{\A} - \psi_{\A} \tilde{u}^{\A})
\end{equation}
which is thus real and, moreover, explicitly self-dual. This is the same as having \cite{Neitzke:2007ke}
\begin{equation}
u_0 = \psi_0 + \frac{1}{2}(\psi_{\A} \tilde{u}^{\A} - \tilde{\psi}^{\A} u_{\A}) - \frac{1}{2} u_{\A} \tilde{u}^{\A} \mathrlap{.}
\end{equation}
Each elementary component of the expression on the right-hand side belongs to the set $\smash{ \ker(L_1 + iL_0) \cap \ker(L_2+ iL_3)  }$, which then by way of the Leibniz rule implies that the same is true for $u_0$. From this formula one can read off the shift $\phi_0$, which can then be shown to satisfy
\begin{equation} \label{shift_0}
\phi_0 + \tilde{\psi}^{\A} \phi_{\A} = \frac{1}{2} \scalebox{0.8}[1]{\bigg(}\nhp \frac{x^0}{r^0} \scalebox{0.8}[1]{\bigg)}^{\!\nhp 2} \npt \Re \chi^{\A} \Re F_{\A}(\chi) - \frac{1}{2} \Im \chi^{\A} \Im F_{\A}(\chi) - \frac{1}{2} \tilde{\psi}^{\A} \psi_{\A} \mathrlap{.}
\end{equation}
A rather unusual feature of this choice of shift, imposed on us by the requirements of manifest duality, is that it has an explicit linear dependence on the $\psi_{\A}$-variables.

\subsubsection{}

So far we have used two coordinate systems on $N_c$: the extended Gibbons--Hawking coordinates $\psi_{\I}$ and $\smash{ \vec{x}^{\pt\I} }$ (with a closely related complexified variant given by \mbox{$\psi_{\I}$, $x^{\I}$, $z^{\I}$, $\bar{z}^{\I}$}), and the Legendre transform-related holomorphic coordinates $u_{\I}$ and $z^{\I}$. The first system is adapted to the $\mathbb{R}^{n+1}$-fibration structure. The second one is adapted to the transversal complex symplectic structure which is holomorphic with respect to $I_1$. For the current construction, however, it is useful to introduce yet a third system of coordinates, adapted to its duality symmetry this time, given by
\begin{equation}
\{\hp \psi_0,\, x^0,\, z^0,\, \bar{z}^0,\, \psi_{\A},\, \tilde{\psi}^{\A},\, \chi^{\A},\, \bar{\chi}^{\A} \hp \} \mathrlap{.}
\end{equation}

The generators of the Heisenberg algebra take in these coordinates the form
{\allowdisplaybreaks
\begin{gather}
Q^{\A} = \frac{\partial}{\partial\psi_{\A}} + \frac{1}{2} \tilde{\psi}^{\A} \frac{\partial}{\partial \psi_0}
\qquad\qquad
P_{\A} = \frac{\partial}{\partial\tilde{\psi}^{\A}} - \frac{1}{2} \psi_{\A} \frac{\partial}{\partial \psi_0} \\
I = \frac{\partial}{\partial \psi_0} \nonumber
\end{gather}
}%
and, moreover, the grading generator $W$ scales the non-zero-indexed coordinates with weight $-1$ and the zero-indexed ones with weight $2$, except for $\psi_0$, which it scales with weight $-2$. 

The advantage of this coordinate system becomes apparent when one tries to compute the connection 1-forms $A_{\I}$. For the shift choices \eqref{shift_A} and \eqref{shift_0}, from the Legendre transform formula \eqref{A_I_LT} we obtain following a rather laborious computation the relatively simple expressions
\begin{align}
& A_{\A} = \Re F_{\A\B} d\tilde{\psi}^{\B} - \Im F_{\A\B}  \big[\hp  \Im \chi^{\B} dx^0_{\diamond} + 4 \Re \chi^{\B} \Im (\hp \bar{z}^{\hp 0}_{\hp\diamond}\hp dz^0_{\hp\diamond})  \big] \\
& A_{\hp 0} + \tilde{\psi}^{\A} A_{\A} = 2 \Im F_{\A\B} \big[\pt |z^0_{\hp\diamond}|^2 \Im(\bar{\chi}^{\A}d\chi^{\B}) + \bar{\chi}^{\A}\chi^{\B} \hp x^0_{\diamond} \pt \Im (\hp \bar{z}^{\hp 0}_{\hp\diamond}\hp dz^0_{\hp\diamond}) \big]- \frac{1}{2} d(\tilde{\psi}^{\A} \psi_{\A}) \nonumber
\end{align}
where, for simplicity, we have used the notations $\smash{ x^0_{\diamond} = x^0/r^0 }$ and $\smash{ z^0_{\hp\diamond} = z^0/r^0 }$.

\subsubsection{}

The Higgs field, on the other hand, can be computed with significantly less effort. By taking derivatives with respect to $x^{\A}$ and $x^0$ of the equation \eqref{L-prop} we obtain, respectively,
\begin{align}
(\hp \vec{x}^{\pt 0} \npt\sprod \vec{x}^{\pt\I}) \pt L_{x^{\I}x^{\A}} & = 0 \\[1pt]
(\hp \vec{x}^{\pt 0} \npt\sprod \vec{x}^{\pt\I}) \pt L_{x^{\I}x^{0}} \pt\hp & = U \nonumber
\end{align}
(recall the index notation convention \eqref{index_conv}!). Moreover, from the preceding expression for the derivative $\smash{ L_{x^{\A}} }$ we can easily compute the second derivative $\smash{ L_{x^{\A}x^{\B}} }$. Together with the relations above this gives us 
\begin{equation} \label{Higgs_c-map}
U_{\I\J} = 
- \frac{1}{r^0}
\left(\begin{array}{c|c}
R + \tilde{\psi}^{\C} \Im F_{\C\D} \tilde{\psi}^{\D} & - \pt \tilde{\psi}^{\C} \Im F_{\C\B} \\[5pt] \hline
- \pt \Im F_{\A\D} \tilde{\psi}^{\D} & \Im F_{\A\B}  \rule{0pt}{16pt}
\end{array}\right)
\end{equation}
where, by definition,
\begin{equation}
R = \frac{U}{2r^0} \mathrlap{.}
\end{equation}

\subsubsection{}

We are now ready to perform the reduction and descend from the hyperk\"ahler cone down to its quaternionic K\"ahler base. In practice, this step consists of substituting into the formulas for the Higgs field $U_{\I\J}$ and connection 1-forms $A_{\I}$ the change of variables \eqref{x_dec} for some choice of restricted configuration with position vectors $\smash{ \vec{\rho}^{\,\I} }$,  and then reading off from the result the reduced Higgs field $\mathscr{U}_{\I\J}$ and reduced connection 1-forms $\mathscr{A}_{\I}$. 

In the case of the Higgs field, this is rather straightforward. Recall, on one hand, that the variables $\smash{ \tilde{\psi}^{\A} }$ and, as argued before, the functions $\smash{ F_{\A\B}(\chi) }$ are invariant at collective transformations, and hence at the particular collective transformation represented by the equation \eqref{x_dec}. On another hand, from the multiplicative property of the quaternionic norm we have $\smash{ r^0 = |\vec{x}^{\pt 0}| = |q|^2 |\vec{\rho}^{\,\hp 0} \nhp| }$, and from the representation \eqref{U_c-inv} for the hyperk\"ahler potential, $\smash{ U = |q|^2 \mathscr{U} }$. These last two properties entail in particular that $R$ is also invariant at collective transformations, and so, like $\smash{ \tilde{\psi}^{\A} }$ and $\smash{ F_{\A\B}(\chi) }$, descends naturally on the base of the hyperk\"ahler cone. It is obvious then that if we substitute into the above Higgs field matrix the relation \eqref{x_dec} we find as expected from general arguments that this is of the form \eqref{U_IJ_dec}, with the reduced Higgs field matrix given by
\begin{equation}
\mathscr{U}_{\I\J} = 
- \frac{1}{|\vec{\rho}^{\,\hp 0} \nhp|}
\left(\begin{array}{c|c}
 R + \tilde{\psi} N \tilde{\psi} & - \pt (\tilde{\psi} N)_{\B} \\[5pt] \hline
- \pt (N \tilde{\psi})_{\A} & N_{\A\B}  \rule{0pt}{16pt}
\end{array}\right) \!\mathrlap{.}
\end{equation}
Aside from the overall factor, this matrix is precisely the same as the one in the equation \eqref{Higgs_c-map}; however, here we have introduced the shorthand notations $\smash{ N_{\A\B} = \Im F_{\A\B} }$, $\smash{ (N\tilde{\psi})_{\A} = N_{\A\B} \tilde{\psi}^{\B} }$, a.s.o.  Let us also record here for subsequent use that the inverse of this matrix is 
\begin{equation}
\mathscr{U}^{\I\J} = 
- \frac{|\vec{\rho}^{\,\hp 0} \nhp|}{R\,}
\left(\begin{array}{c|c}
1 & \tilde{\psi}^{\B} \\[5pt] \hline
\tilde{\psi}^{\A} &  \tilde{\psi}^{\A}\tilde{\psi}^{\B} + R N^{\A\B}   \rule{0pt}{16pt}
\end{array}\right)
\end{equation}
with $N^{\A\B}$ representing the matrix inverse of $ N_{\A\B}$. 

In the case of the connection 1-forms $A_{\I}$, if we consider a restricted configuration with
\begin{equation}
\vec{\rho}^{\, \hp 0} \nhp = \mbox{constant}
\end{equation}
then the change of variables \eqref{x_dec} yields, after a rather more strenuous calculation, a result of the form \eqref{A_K_dec}, with the reduced connection 1-forms given by
\begin{align}
& \mathscr{A}_{\A} = \Re F_{\A\B} \pt d\tilde{\psi}^{\B} \\
& \mathscr{A}_{\pt 0} + \tilde{\psi}^{\A} \mathscr{A}_{\A} =  \frac{1}{2} \Im F_{\A\B} \frac{\vec{\rho}^{\,\hp 0} \nhp \sprod (\hp \vec{\rho}^{\, \A} \nhp\vprod d \vec{\rho}^{\, \B})}{(\rho^0)^3} - \frac{1}{2} d(\tilde{\psi}^{\A} \psi_{\A}) \mathrlap{.} \nonumber
\end{align}

\subsubsection{}

Let us specialize now further to the particular choice of restricted configuration considered in \eqref{res-config}. For this we then have, via the formulas \eqref{psi-tld} and \eqref{chi/chi},
\begin{equation}
\tilde{\psi}^{\A} = \rho^{\A}_1
\qquad \text{and} \qquad
\frac{\chi^{\A}}{\chi^{\B}} = \frac{\rho^{\A}_2 + i \rho^{\A}_3}{\rho^{\B}_2 + i \rho^{\B}_3} 
\end{equation}
suggesting we should define also the variables
\begin{equation}
\mathcal{X}^{\A} = \frac{1}{2} (\rho^{\A}_2 + i \rho^{\A}_3) \mathrlap{.}
\end{equation}
These are all complex, with the exception of $\mathcal{X}^1$, which is real and positive. The reality condition is important for the coordinate count: in what follows we will coordinatize the quaternionic K\"ahler space by means of the variables $\psi_0$, $\psi_{\A}$, $\tilde{\psi}^{\A}$ and $\mathcal{X}^{\A}$, which would not be possible if the total number of their real components were not equal to $4n$. 

Note that since the function $F_{\A\B}$ is homogeneous of degree zero, one can argue that $\smash{ F_{\A\B}(\chi) = F_{\A\B}(\mathcal{X}) }$, which then allows us to pass from the variables $\chi^{\A}$ to the $\mathcal{X}^{\A}$ ones. With obvious notations, the hyperk\"ahler potential formula \eqref{U_c-inv} gives us in this case
\begin{equation} \label{R-exprs}
R = - 2 \hp (\bar{\mathcal{X}} N \mathcal{X}) = -2 \Im [\bar{\mathcal{X}}^{\A} F_{\A}(\mathcal{X})] \mathrlap{.}
\end{equation}
This satisfies, incidentally, the differential identity
\begin{equation} \label{whatevs}
(\theta_0 = \pt)\, \frac{dR}{2R} = \frac{\Im(\bar{\mathcal{X}} N d\mathcal{X})}{(\bar{\mathcal{X}} N \mathcal{X})}
\end{equation}
a consequence of the holomorphicity property of the prepotential.

\subsubsection{}

Knowledge of the reduced Higgs field and connection 1-forms allows us to finally compute the quaternionic K\"ahler metric explicitly. To this end, observe first that from the expression \eqref{theta_vec} for the $SO(3)$ connection 1-forms and with the above definitions we obtain in this case
{\allowdisplaybreaks
\begin{align}
\theta_1 & = - \frac{1}{2R} [ \pt \alpha + \Re (\bar{\mathcal{X}}^{\A}dF_{\A}(\mathcal{X}) - F_{\A}(\mathcal{X}) \hp d\bar{\mathcal{X}}^{\A}) ] \\
\frac{1}{2}(\theta_2+i\hp\theta_3)  & = - \frac{1}{2R} [ \pt \mathcal{X}^{\A}d\psi_{\A} + F_{\A}(\mathcal{X}) \hp d\tilde{\psi}^{\A} ] \nonumber
\intertext{where we have used the notation}
\alpha & = d\psi_{0} + \frac{1}{2} (\tilde{\psi}^{\A} d\psi_{\A} - \psi_{\A} d\tilde{\psi}^{\A}) \mathrlap{.} 
\end{align}
}%
The metric itself can be most conveniently worked out from the version \eqref{QK-quat-1} of the metric Ansatz, in which one can make use of the above formulas for the $SO(3)$ connection 1-forms. In the end, using among other things the identity \eqref{whatevs}, we arrive as predicted in \cite{Rocek:2005ij, Rocek:2006xb} at the Ferrara--Sabharwal metric \cite{Ferrara:1989ik}
\begin{equation}
2 s \pt g = g_{\scriptscriptstyle PSK} - \scalebox{0.8}[1]{\bigg(} \frac{dR}{2R} \pt\scalebox{0.8}[1]{\bigg)}^{\!\nhp 2} \! - \frac{g_{\scriptscriptstyle T}}{R\,} - \frac{\alpha^2}{R^2} 
\end{equation}
where
{\allowdisplaybreaks
\begin{align}
& g_{\scriptscriptstyle PSK} = \frac{(\bar{\mathcal{X}}N\mathcal{X})(d\bar{\mathcal{X}}Nd\mathcal{X}) - (d\bar{\mathcal{X}}N\mathcal{X})(\bar{\mathcal{X}}Nd\mathcal{X})}{(\bar{\mathcal{X}}N\mathcal{X})^2} \\
\intertext{is the so-called projective special K\"ahler metric and}
& g_{\scriptscriptstyle T} = \frac{1}{2} (\Im \tau )^{-1\A\B} (d\psi_{\A} + \tau_{\A\C} \hp d\tilde{\psi}^{\C}) (d\psi_{\B} + \bar{\tau}_{\B\D} \hp d\tilde{\psi}^{\D})
\end{align}
}%
is a complex $n$-torus metric with period matrix \cite{deWit:1984wbb}
{\allowdisplaybreaks
\begin{align}
& \tau_{\A\B} = F_{\A\B}(\mathcal{X}) - 2i \pt \frac{(N\bar{\mathcal{X}})_{\A}(N\bar{\mathcal{X}})_{\B}}{(\bar{\mathcal{X}}N\bar{\mathcal{X}})} \mathrlap{.} \\
\intertext{Under a duality transformation this undergoes the modular transformation $\smash{ \tau_{\A\B} \mapsto \! - (\tau^{-1})^{\A\B} }$ and, furthermore, we have}
& (\Im \tau )^{-1\A\B} = N^{\A\B} - \frac{\bar{\mathcal{X}}^{\A}\mathcal{X}^{\B} + \mathcal{X}^{\A}\bar{\mathcal{X}}^{\B}}{(\bar{\mathcal{X}}N\mathcal{X})} \mathrlap{.}
\end{align}
}%

\subsubsection{}

To make the junction with the usual formulation found in the literature we need to perform one final change of coordinates. Letting
\begin{equation}
Z^{\A} = \frac{\mathcal{X}^{\A}}{\mathcal{X}^1\pt}
\end{equation}
we replace the subset of coordinates $\smash{ \{ \mathcal{X}^{\A}\}_{\A = 1,\dots,n} }$ with the \textit{complex inhomogeneous coordinates} $\smash{ \{ Z^{\A^{\prime}} \nhp \}_{\A^{\prime} = 2,\dots,n} }$ plus $R$. The projective special K\"ahler metric $g_{\scriptscriptstyle PSK}$ can be understood then as the K\"ahler metric corresponding to the K\"ahler potential
\begin{equation}
\mathscr{K} = \ln \Im[\bar{Z}^{\A} F_{\A}(Z)] = \ln (\bar{Z}NZ)
\end{equation}
with $\smash{ Z^{\A^{\prime}} \!}$ as complex holomorphic coordinates. (A more geometric definition of projective special K\"ahler metrics can be found in \cite{MR1695113,Alekseevsky:1999ts}.) The formulas are straightforward to re-express in the new coordinates due to their homogeneity properties. We leave the details  to the reader as an exercise. 

The $4n$-dimensional Ferrara--Sabharwal metric has \mbox{$2n+2$} isometries descending from the corresponding tri-Hamiltonian isometries on the hyperk\"ahler cone and satisfying the same graded Heisenberg algebra \eqref{H-berg}. In the above inhomogeneous coordinate frame their generating vector fields take the form \cite{deWit:1990na} 
{\allowdisplaybreaks
\begin{gather}
\mathscr{Q}^{\A} = \frac{\partial}{\partial\psi_{\A}} + \frac{1}{2} \tilde{\psi}^{\A} \frac{\partial}{\partial \psi_0} 
\qquad\qquad
\mathscr{P}_{\A} = \frac{\partial}{\partial\tilde{\psi}^{\A}} - \frac{1}{2} \psi_{\A} \frac{\partial}{\partial \psi_0} \\[-2pt]
\mathscr{I} = \frac{\partial}{\partial \psi_0} \nonumber \\[0pt]
\mathscr{W} = - \, \psi_{\A} \frac{\partial}{\partial \psi_{\A}} - \tilde{\psi}^{\A} \frac{\partial}{\partial \tilde{\psi}^{\A}} - 2 \hp \psi_0 \frac{\partial}{\partial \psi_0} - 2 R \frac{\partial}{\partial R} \mathrlap{.} \nonumber
\end{gather}
}%

Finally, let us also mention for the sake of completeness the well-known fact that if the prepotential function $F$ is such that $\smash{ N_{\A\B} = \Im F_{\A\B}}$ has Lorentzian signature \mbox{$(1,n-1)$}, then on the open complex domain given by the condition $(\bar{Z}NZ) > 0$, where we have \mbox{$R<0$}, both $g_{\scriptscriptstyle PSK}$ and $g_{\scriptscriptstyle T}$ are negative definite (the first statement is straightforward, the second one was shown in \cite{Cremmer:1984hj}  by an ingenious argument based on the so-called inverse Cauchy-Schwarz inequality), and therefore \mbox{$s\pt g$} is negative definite. In other words, in this case one can choose the metric to be positive definite, and this metric will have a negative scalar curvature.

\subsubsection{}

The local c-map has been studied extensively in both the physics and mathematics literature. In particular, a different and very interesting geometric construction, starting from 
the \textit{rigid} c-map and based on the so-called \textit{hyperk\"ahler/quaternionic K\"ahler correspondence}, has been given in \cite{Alekseevsky:2013nua} (see also the preceding work of \cite{MR2494168} and \cite{Alexandrov:2011ac}). Global aspects of the Ferrara--Sabharwal metric have been considered in \cite{Cortes:2011aj,MR3324146}, and completeness issues have been discussed in \cite{MR3751963,Cortes:2011aj}. Further considerations such as quantum perturbative and instanton corrections have been explored in \cite{Anguelova:2004sj,RoblesLlana:2006is,Alexandrov:2011ac}.

\bigskip

\bibliographystyle{amsplain}
\bibliography{ToricQK}


\Address

\end{document}